\documentclass[a4paper, 12pt]{article}

\usepackage[utf8]{inputenc}
\usepackage[margin=2.5cm]{geometry}


\usepackage{amsmath, amsthm, amssymb}
\usepackage{array}
\usepackage{graphicx}
\usepackage{enumerate}
\usepackage{authblk}
\usepackage{thm-restate}
\usepackage{float}

\usepackage{tikz}

\usepackage[hypertexnames=false, colorlinks=true, linkcolor =blue!60!black, citecolor=orange!70!black]{hyperref}

\usepackage[capitalise, nameinlink]{cleveref}
\Crefname{observation}{Observation}{Observations}

\declaretheorem[name=Theorem, numberwithin=section]{theorem}
\declaretheorem[name=Lemma, sibling=theorem]{lemma}
\declaretheorem[name=Proposition, sibling=theorem]{proposition}

\declaretheorem[name=Corollary, sibling=theorem]{corollary}

\declaretheorem[name=Claim, sibling=theorem]{claim}
\declaretheorem[name=Remark, style=remark, sibling=theorem]{remark}
\declaretheorem[name=Observation, style=remark, sibling=theorem]{observation}
\declaretheorem[name=Question, style=remark, sibling=theorem]{question}

\def\cqedsymbol{\ifmmode$\lrcorner$\else{\unskip\nobreak\hfil
\penalty50\hskip1em\null\nobreak\hfil$\lrcorner$
\parfillskip=0pt\finalhyphendemerits=0\endgraf}\fi} 
\newcommand{\cqed}{\renewcommand{\qed}{\cqedsymbol}}


\interfootnotelinepenalty=10000

\newcommand{\ceil}[1]{\left \lceil #1 \right \rceil}

\DeclareMathOperator{\clin}{\chi_\mathrm{lin}}
\DeclareMathOperator{\ccen}{\chi_\mathrm{cen}}
\DeclareMathOperator{\td}{\mathbf{td}}
\DeclareMathOperator{\tw}{\mathbf{tw}}

\newcommand{\grid}{\boxplus}
\newcommand{\N}{\mathbb{N}}
\DeclareMathOperator{\Oh}{\mathcal{O}}

\let\le\leqslant
\let\ge\geqslant
\let\leq\leqslant
\let\geq\geqslant

\title{Linear colorings of graphs}

\author[1]{Claire Hilaire}
\author[2]{Matjaž Krnc}
\author[2]{Martin Milanič}
\author[3]{Jean-Florent Raymond}
\affil[1]{Université Clermont Auvergne, Clermont Auvergne INP, LIMOS, Clermont-Ferrand, France}
\affil[2]{FAMNIT and IAM, University of Primorska, Koper, Slovenia}
\affil[3]{CNRS, ENS de Lyon, Université Claude Bernard Lyon 1, LIP UMR 5668,
  Lyon, France}

\date{March 2026}

\begin{document}

\maketitle

\begin{abstract}
Motivated by algorithmic applications, Kun, O'Brien, Pilipczuk, and Sullivan introduced the parameter linear chromatic number as a relaxation of treedepth and proved that the two parameters are polynomially related. 
They conjectured that treedepth could be bounded from above by twice the linear chromatic number.

In this paper we investigate the properties of linear chromatic number and provide improved bounds in several graph classes.
\end{abstract}

\section{Introduction}

\begin{sloppypar}
Treedepth is a graph parameter that has been studied under different names, such as vertex ranking number \cite{schaffer1989optimal}, ordered coloring \cite{katchalski1995ordered}, and elimination height \cite{pothen1988complexity}. 
It plays a central role in the graph sparsity theory of Ne\v{s}et\v{r}il and Ossona de Mendez~\cite{nevsetvril2012sparsity} due to its strong ties with the notion of bounded expansion. 
Like several other concepts from sparsity theory, a convenient aspect of this parameter is that it has several equivalent characterizations, so one can choose the most relevant one depending on the context.
\end{sloppypar}

The definition of treedepth that we mostly consider in this work is via centered colorings. 
A \emph{centered coloring} of a graph $G$ is an assignment of integers (referred to as \emph{colors}) to its vertices such that in every connected subgraph of $G$ there is a vertex with a unique color, called \emph{center}. 
Observe that every centered coloring is proper (i.e., adjacent vertices receive different colors).
The minimum number of colors required in a centered coloring of $G$ is its \emph{centered chromatic number}, denoted by $\ccen(G)$.
As it turns out, this number is equal to the treedepth of $G$ (see \cite{nevsetvril2012sparsity}).

Kun, O'Brien, Pilipczuk, and Sullivan studied in \cite{kun2021polynomial} the following relaxation of centered colorings due to Felix Reidl and Fernando S\'anchez-Villaamil.
A \emph{linear} coloring of a graph $G$ is an assignment of integers to its vertices such that in every path of $G$ there is a vertex with a unique color. 
The \emph{linear chromatic number} of $G$ is the minimal number of colors required in a linear coloring of $G$, and we denote it by $\clin(G)$. 
As paths of $G$ are connected subgraphs, every centered coloring is a linear coloring, so we always have $\clin(G)\leq \ccen(G)$.
Observe that, just like centered colorings, every linear coloring is also proper.
The motivation of the authors of \cite{kun2021polynomial} for introducing the linear chromatic number is algorithmic.
They suggested an approach towards improving the running time (in practice) of an algorithmic pipeline for classes of bounded expansion~\cite{DRRSSS14,DKT13,NO08} and were able to reduce the question whether their algorithmic strategy is relevant to the following graph theory problem: how big can $\ccen$ be compared to $\clin$?
In the same paper they provided the following partial answer: there is a constant $c\leq 190$ such that for every graph $G$, $\ccen(G)\leq \clin(G)^c \cdot (\log \clin(G))^{\Oh(1)}$.

Using the recent results of \cite{czerwinski2021improved} the upper bound on $c$ could be improved to 19 (as observed in~\cite{kun2021polynomial}), and \cite{bose2022linear} further improved it to 10.

\begin{theorem}[\cite{bose2022linear}]
\label{th:polybd}
Every graph $G$ satisfies $\ccen(G)\leq \clin(G)^{10} \cdot (\log \clin(G))^{\Oh(1)}$.
\end{theorem}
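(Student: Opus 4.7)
The plan is to combine two structural consequences of a small linear chromatic number with a general inequality that bounds $\ccen$ in terms of treewidth and the length of the longest path. First, for any path $P$ in $G$, the restriction of a linear coloring of $G$ to $P$ is itself a linear coloring of $P$ (since every sub-path is a connected subgraph), and on a path every connected subgraph is again a path, so linear and centered chromatic numbers coincide on paths. Together with the classical identity $\ccen(P_\ell)=\lceil \log_2(\ell+1)\rceil$, this shows that the longest path in $G$ has at most $2^{\clin(G)}-1$ vertices, so $\log_2(\ell+1)\le \clin(G)$ with $\ell$ the length of a longest path in $G$.

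Next, I would aim to prove a polynomial bound of the form $\tw(G)\le \clin(G)^{9}\cdot (\log \clin(G))^{\Oh(1)}$. The natural strategy is via balanced separators: using the absence of long paths established above together with the strong structural restriction imposed by a good linear coloring (every path must contain a uniquely colored vertex), an inductive argument should produce, in every subgraph of $G$, a balanced separator of the claimed size, from which the treewidth bound follows by standard techniques. A plausible alternative route is to bound a suitable weak coloring number $\mathrm{wcol}_r(G)$ by a polynomial in $\clin(G)$ and $r$, and then exploit the well-known link between weak coloring numbers and treedepth/treewidth.

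Finally, I would invoke the inequality (essentially from \cite{czerwinski2021improved})
\[ \ccen(G) \le (\tw(G)+1)\cdot \log_2(\ell+1)\cdot (\log \tw(G))^{\Oh(1)}, \]
relating centered chromatic number to treewidth and path length, and substitute the two bounds above to reach $\ccen(G) \le \clin(G)^{10}\cdot (\log \clin(G))^{\Oh(1)}$. The main obstacle is clearly the treewidth bound: pushing the exponent down to exactly $9$ is delicate, since any constant factor lost at this step propagates multiplicatively into the final exponent, and standard path-based separator arguments tend to yield a substantially larger exponent; obtaining the tight constant likely requires exploiting the linear-coloring structure more carefully than the purely path-length-based arguments of \cite{czerwinski2021improved}.
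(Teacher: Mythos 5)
This statement is quoted by the paper from \cite{bose2022linear} and is not proved here, so there is no in-paper proof to compare against; judging your sketch against the argument actually used in the literature (which the paper's introduction and Section~\ref{sec:general} outline), your overall architecture --- bound the treewidth polynomially in $\clin$, then convert treewidth plus a ``no large sparse obstruction'' condition into a treedepth bound --- is the right one, but two steps are genuinely broken or missing.

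First, the inequality you invoke at the end, $\ccen(G) \le (\tw(G)+1)\cdot \log_2(\ell+1)\cdot (\log \tw(G))^{\Oh(1)}$ with $\ell$ the longest path, is false. The complete binary tree $B_k$ has treewidth $1$ and longest path on $2k-1$ vertices, yet $\ccen(B_k)=k$ (\Cref{lem:tdbk}), so the right-hand side would be $\Oh(\log k)$ against a left-hand side of $k$. The correct statement from \cite{czerwinski2021improved} is \Cref{th:wd}: large $\ccen$ forces large treewidth \emph{or a subcubic tree of large treedepth as a subgraph} --- not a long path. To rule out the second outcome you cannot use the longest-path bound $\ell \le 2^{\clin(G)}-1$ from \Cref{lem:tdpk}; you need the separate fact that subcubic trees $T$ satisfy $\ccen(T)\le (\log 3)\cdot\clin(T)$ (\Cref{th:treedeg}/\Cref{cor:treedeg}), which is a nontrivial result of \cite{kun2021polynomial}. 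With that, one gets $\ccen(G)\le \Oh(\tw(G)\cdot \clin(G))$, which is \Cref{th:lineartw}, and this is the factor contributing the final ``$+1$'' in the exponent.

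Second, the treewidth bound $\tw(G)\le \clin(G)^{9}\cdot(\log\clin(G))^{\Oh(1)}$ is the entire content of the theorem and your proposal only gestures at it. Neither of your suggested routes works as stated: a balanced-separator argument based solely on the absence of long paths cannot bound treewidth at all (graphs of huge treewidth, e.g.\ large cliques or expanders of bounded diameter, have short longest induced structure constraints only through the coloring, and quantifying that is exactly the hard part), and no polynomial bound on weak coloring numbers in terms of $\clin$ is known. The actual argument is: by the polynomial excluded-grid theorem, a graph of treewidth $w$ contains a $\Omega\bigl(w^{1/9}/(\log w)^{\Oh(1)}\bigr)\times \Omega\bigl(w^{1/9}/(\log w)^{\Oh(1)}\bigr)$ grid minor, hence a pseudogrid of that order as a subgraph, and $k\times k$ pseudogrids have linear chromatic number $\Omega(k)$ (\Cref{th:lineargrid}); inverting gives $\tw(G)\le \clin(G)^{9}\cdot(\log\clin(G))^{\Oh(1)}$. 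The exponent $9$ is inherited from the grid-minor theorem, not from any coloring-specific separator argument, so ``pushing the exponent down to $9$'' is not a matter of care in your induction --- it requires importing that external theorem.
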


On the other hand, the authors of \cite{kun2021polynomial} provided a sequence of $P_5$-free chordal graphs for which the ratio $\ccen(G)/\clin(G)$ can be arbitrarily close to $2$.
They conjectured that this could be the optimal gap between the two invariants.

\begin{restatable}[\cite{kun2021polynomial}]{conjecture}{conj}
\label{conj:main}
    For every graph $G$, $\ccen(G) \leq 2 \clin(G)$.
\end{restatable}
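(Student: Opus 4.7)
The plan is to proceed by induction on $k = \clin(G)$. The base case $k = 1$ is immediate, as then $G$ is edgeless and $\ccen(G) = 1$. For the inductive step, fix a linear coloring $c\colon V(G) \to \{1, \ldots, k\}$ witnessing $\clin(G) = k$. I would aim to exhibit a set $S \subseteq V(G)$ of size at most $2$ such that every connected component $C$ of $G - S$ satisfies $\clin(C) \le k - 1$. Combining the induction hypothesis $\ccen(C) \le 2(k-1)$ applied componentwise with the fact that prepending $S$ as the topmost levels of a centered coloring costs at most $|S|$ extra colors yields $\ccen(G) \le |S| + 2(k-1) \le 2k$.

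Everything thus reduces to a structural lemma: \emph{whenever $\clin(G) = k \ge 2$, there exists a pair of vertices $\{u,v\}$ such that $\clin(C) \le k-1$ for every connected component $C$ of $G - \{u,v\}$.} A natural candidate is to pick $\{u,v\}$ inside a carefully chosen color class $c^{-1}(i)$ and then argue that the restriction of $c$ to each component of $G - \{u,v\}$ can be turned into a linear $(k-1)$-coloring by recoloring the remaining vertices of $c^{-1}(i)$ using one of the surviving $k-1$ colors. The extremal $P_5$-free chordal family from \cite{kun2021polynomial} shows that a single vertex cannot always suffice, so $|S| = 2$ is genuinely needed, matching exactly the factor of $2$ in the conjecture.

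The main obstacle is proving this structural lemma. A priori, removing any small vertex set may leave $\clin$ unchanged on some component because a long path still forces $k$ colors; controlling this requires a delicate understanding of the interplay between paths and vertex-cuts, sharper than what enters the polynomial bound of \Cref{th:polybd}. One natural strategy is to examine a longest path $P$ in $G$ and argue that its endpoints, together with a carefully chosen internal vertex realizing the unique color guaranteed by the linear-coloring property, form the required separator; another is to extract two ``deep'' vertices of a color class that is unavoidably hit by every long path.

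If direct approaches prove elusive, I would fall back on first establishing the conjecture in structured subclasses, such as chordal, $P_5$-free, or bounded-clique-number graphs, hoping to isolate the combinatorial mechanism at the core, and to combine this with a reduction showing that the worst case can be assumed to have restricted structure. Regardless of the route, the crux is the same: to convert the \emph{path}-based guarantee provided by a linear coloring into a \emph{connected-subgraph}-based guarantee by paying only a factor of two in the number of colors, essentially a statement that paths are, up to doubling, as expressive as connected subgraphs for the purpose of isolating a uniquely colored center.
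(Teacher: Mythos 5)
The statement you are trying to prove is \Cref{conj:main}, which the paper does not prove: it is an open conjecture of Kun, O'Brien, Pilipczuk, and Sullivan, restated in the conclusion as still unresolved, with the best known general bound being the degree-$10$ polynomial of \Cref{th:polybd}. So there is no proof in the paper to compare against, and your proposal does not close that gap: it is a strategy outline whose entire content is delegated to an unproven ``structural lemma,'' namely that whenever $\clin(G)=k\ge 2$ there is a set $S$ of at most two vertices such that every component of $G-S$ has linear chromatic number at most $k-1$. You offer no argument for this lemma, only candidate approaches (``a natural candidate is\dots,'' ``if direct approaches prove elusive, I would fall back on\dots''). The surrounding induction is fine --- placing $S$ at the top of an elimination tree does give $\ccen(G)\le |S|+\max_C\ccen(C)$, and $k=1$ is handled by \Cref{obs:small-values} --- but that scaffolding is the easy part.

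Note moreover that your lemma, as literally stated, is false for disconnected graphs: if $G$ is the disjoint union of three copies of a connected graph $H$ with $\clin(H)=k$, then $\clin(G)=k$ (paths cannot cross components), yet deleting any two vertices leaves an untouched copy of $H$ as a component with linear chromatic number still $k$. One can reduce to connected $G$ since both parameters are maxima over components, but even then the lemma is at least as strong as \Cref{conj:main} itself (it implies the conjecture by your induction), so asserting it without proof begs the question. The genuinely hard step --- converting the path-based guarantee of a linear coloring into control over all connected subgraphs at a multiplicative cost of $2$ --- is exactly what is open; the paper's contribution is instead to verify the conjecture, or weaker linear/quadratic bounds, in restricted classes (\Cref{table:sum}) via tools such as \Cref{th:wd,th:lineargrid}, not via any vertex-deletion induction of the kind you propose.
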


At the time of writing, the best bound that we are aware of on $\ccen$ in terms of $\clin$ in the general case is the aforementioned double-digit degree polynomial, quite far from the conjectured linear bound.
To the best of our knowledge, the only graph classes where a linear bound is known to hold between the two parameters are trees of bounded degree and pseudogrids (see Bose et al.~\cite{bose2022linear}), where a $k\times k$ pseudogrid is a subgraph-minimal graph containing the $k\times k$ grid as a minor.\footnote{The result from \cite{bose2022linear} that the relation between the linear and the centered chromatic numbers of pseudogrids is linear is only stated, but not entirely proved. 
What is proved in \cite{bose2022linear} is that the linear chromatic number of a $k\times k$ pseudogrid is in $\Omega(k)$.
For completeness, we prove that the centered chromatic number of a $k\times k$ pseudogrid is in $\mathcal{O}(k)$ in \Cref{sec:pseudogrids}.}

\begin{theorem}[{\cite[Theorem~4]{kun2021polynomial}}]\label{th:treedeg}
    Let $T$ be a tree of maximum degree $\Delta\geq 3$.
    Then $\ccen(T) \leq (\log \Delta)\cdot \clin(T)$.
\end{theorem}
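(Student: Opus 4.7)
I would proceed by induction on $k = \clin(T)$. For the base case $k = 1$, a linear coloring is proper, so $T$ reduces to a single vertex and $\ccen(T) = 1 \leq \log \Delta$.

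For the inductive step, the heart of the argument is the following separator lemma: for any tree $T$ with $\clin(T) = k \geq 2$ and maximum degree at most $\Delta$, there is a set $S \subseteq V(T)$ of at most $\log \Delta$ vertices such that every connected component of $T - S$ has linear chromatic number at most $k - 1$. Given such an $S$, I construct a centered coloring by assigning each vertex of $S$ its own fresh color and inductively centered-coloring each component of $T - S$ with $(k-1)\log \Delta$ colors from a disjoint palette. The total number of colors used is $|S| + (k-1)\log\Delta \leq k \log\Delta$, and the result is centered: any connected subgraph $H$ of $T$ either lies inside a component of $T-S$ (and inherits a center from the inductive coloring) or intersects $S$, in which case any vertex in $H \cap S$ has a globally unique color and serves as a center.

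The main obstacle is establishing the separator lemma. Since the restriction of a linear coloring to a path is still linear, and $\clin(P_n) = \lceil \log_2(n+1)\rceil$, every path in $T$ has at most $2^k - 1$ vertices, so in particular $\mathrm{diam}(T) \leq 2^k - 1$. To find $S$, I would start from a longest path $P$ of $T$ and cut at its central vertex, which splits $P$ into two halves, each containing fewer than $2^{k-1}$ vertices. The subtrees hanging off $P$ are the delicate part: the degree bound $\Delta$ limits the number of directions along which long paths can reappear at each cut, and iterating a balanced-cut argument should yield a total $\log \Delta$ budget of vertices to include in $S$.

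A cleaner alternative that avoids designing an explicit separator is to aim for the size bound $|V(T)| \leq \Delta^{\clin(T)}$ for trees of maximum degree at most $\Delta$. This would itself be proved by induction on $\clin(T)$: removing a well-chosen color class $V_i$ yields a forest whose components have strictly smaller $\clin$ and are bounded by the induction hypothesis, while the fact that $V_i$ is an independent set in $T$ constrains the number of such components in terms of $|V_i|$ and $\Delta$. Combining such a size bound with the centroid-decomposition inequality $\ccen(T) \leq \lceil \log_2(|V(T)|+1) \rceil$ would immediately give $\ccen(T) \leq \clin(T)\log\Delta$; the subtle point here is the clever choice of color class, as a naive pick (e.g., the smallest class) does not straightforwardly propagate the induction.
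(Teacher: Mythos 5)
First, a point of reference: the paper does not prove this theorem at all — it is imported verbatim from \cite[Theorem~4]{kun2021polynomial} — so there is no in-paper argument to compare yours against, and your attempt has to stand on its own. It does not, because both of your routes rest on a key lemma that is left unproved, and the first of these is in fact false. Your separator lemma asserts that every tree $T$ with $\clin(T)=k\ge 2$ and maximum degree $\Delta$ contains a set $S$ of at most $\log\Delta$ vertices such that every component of $T-S$ has linear chromatic number at most $k-1$; for subcubic trees this means $|S|\le\lfloor\log 3\rfloor=1$. Now pick $m$ with $\clin(B_{m-1})=\clin(B_m)=k$. Such $m$ exist: otherwise $\clin(B_m)$ would increase at every level, forcing $\clin(B_m)=m=\ccen(B_m)$ for all $m$ and contradicting \Cref{th:log3}. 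Whichever single vertex $v$ you delete from $B_m$, some component of $B_m-v$ still contains a copy of $B_{m-1}$ as a subgraph (deleting the root leaves two copies of $B_{m-1}$; deleting any other vertex leaves untouched the subtree rooted at one child of the root, inside the root's component), so by \Cref{obs:monolin} that component still has linear chromatic number $k$, not $k-1$. Hence no admissible $S$ exists and the induction cannot start. Even relaxing to $|S|\le\lceil\log\Delta\rceil$ would only yield $\ccen(T)\le k\lceil\log\Delta\rceil$, which for subcubic trees is $2k$ rather than the claimed $k\log 3$. The underlying difficulty is that, unlike the centered chromatic number, $\clin$ admits no elimination-style recurrence: deleting a color class of a linear coloring does not in general decrease the linear chromatic number of the remaining components.

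Your second route is more promising in spirit: the inequality $\ccen(T)\le\lceil\log_2(|V(T)|+1)\rceil$ is indeed valid for trees via centroid decomposition, so a size bound of the form $|V(T)|\le\Delta^{\clin(T)}$ would essentially finish the job (up to a rounding issue, since $\lceil\clin(T)\log\Delta\rceil$ may slightly exceed $\clin(T)\log\Delta$). But that size bound carries the entire weight of the theorem, and you explicitly concede that you do not see how to make the induction close: removing a color class $V_i$ can create up to $1+|V_i|(\Delta-1)$ components with no control on $|V_i|$. Note moreover that the bound is quite strong — for the complete $(\Delta-1)$-ary tree of depth $d$ it would force $\clin\ge d\log_\Delta(\Delta-1)$, i.e., a linear chromatic number nearly equal to the treedepth, which is much more than the theorem itself gives and is not obviously true for large $\Delta$. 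As it stands, neither route constitutes a proof.
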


In the case of interval graphs, \cite{kun2021polynomial} provided the following improvement over the general bound.
\begin{theorem}[\cite{kun2021polynomial}]\label{th:intv}
For every interval graph $G$, $\ccen(G) \leq \clin(G)^2$.
\end{theorem}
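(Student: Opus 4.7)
The plan is to derive the theorem from a sharper inequality for interval graphs: for every interval graph $G$,
\[
\ccen(G) \leq \omega(G) \cdot \max\{1,\lceil \log_2(L(G)+1)\rceil\},
\]
where $L(G)$ denotes the length of a longest induced path in $G$. This implies the theorem via two monotonicity facts: $\omega(G) \leq \clin(G)$, since every clique must receive pairwise distinct colors in any (proper, hence linear) coloring; and $L(G) \leq 2^{\clin(G)}-2$, since $\clin(P_n)=\lceil \log_2(n+1)\rceil$ and $\clin$ is monotone under taking induced subgraphs. Combining them yields $\ccen(G) \leq \clin(G)\cdot \clin(G) = \clin(G)^2$.

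The sharper inequality is proved by induction on $L(G)$. If $L(G)\leq 1$ then $G$ is a disjoint union of cliques and isolated vertices, so $\ccen(G)\leq \omega(G)$. For $L(G)\geq 2$, the aim is to exhibit a clique $S\subseteq V(G)$ with $|S|\leq \omega(G)$ such that every component $H$ of $G-S$ satisfies $L(H)\leq \lfloor L(G)/2\rfloor$. The standard recurrence $\ccen(G)\leq |S|+\max_H \ccen(H)$ combined with the induction hypothesis then gives $\ccen(G)\leq \omega(G)\cdot \lceil \log_2(L(G)+1)\rceil$.

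To construct $S$, I fix a longest induced path $P=p_0,p_1,\ldots,p_L$ in $G$. By a standard property of induced paths in interval graphs, we may order its vertices so that $r_{p_0}<r_{p_1}<\ldots<r_{p_L}$, where $r_v$ denotes the right endpoint of the interval $I_v$ representing $v$. Setting $q=r_{p_{\lfloor L/2\rfloor}}$ and $S=\{v\in V(G): q\in I_v\}$, the set $S$ is a clique with $|S|\leq \omega(G)$; a short case analysis shows that $S\cap V(P)=\{p_{\lfloor L/2\rfloor},p_{\lfloor L/2\rfloor+1}\}$, so the contribution of $P$ to each side of the split is a subpath of length at most $\lfloor L/2\rfloor$.

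The main obstacle is to rule out \emph{other} long induced paths on either side. If the left part $G_L=\{v:r_v<q\}$ contained an induced path $Q$ of length exceeding $\lfloor L/2\rfloor$, the plan is to splice $Q$ with the right portion $p_{\lfloor L/2\rfloor+2},\ldots,p_L$ of $P$ through a carefully chosen vertex of $S$, producing an induced path in $G$ strictly longer than $L$ and contradicting the maximality of $P$. The key observation enabling this splice is that no interval in $G_L$ overlaps any interval in $G_R=\{v:l_v>q\}$ (the former end strictly before $q$ and the latter start strictly after $q$), so the only shortcut edges to worry about involve the splice vertex itself; these can be controlled by exploiting the left-to-right order of right endpoints along $Q$ and by picking the splice vertex to be the one in $S$ whose interval starts at the rightmost position compatible with being adjacent to the last vertex of $Q$. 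Making the splicing argument fully rigorous, and checking the symmetric statement for $G_R$, is where I expect the main technical work.
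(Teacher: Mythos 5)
You should first be aware that the paper does not prove \Cref{th:intv} at all: it is imported from \cite{kun2021polynomial}, and the closest result the paper itself establishes is \Cref{cor:chordal}, which gives $\ccen(G)=\Oh(\clin(G)^2)$ for the larger classes of chordal and circular-arc graphs by an entirely different route (a tree decomposition into few cliques, \Cref{lem:BGT}, combined with the treewidth bound of \Cref{th:lineartw}) and with a worse constant. So your proposal can only be judged on its own terms. The reduction part of your plan is sound: $\omega(G)\le\clin(G)$, $\lceil\log_2(L(G)+1)\rceil\le\clin(G)$ (via \Cref{lem:tdpk} and \Cref{obs:monolin}), the base case, and the arithmetic of the recursion all check out.

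The gap is in the central claim that every component of $G-S$ has longest induced path of length at most $\lfloor L/2\rfloor$. This is \emph{false} for an arbitrary longest induced path $P$ and the cut point $q=r_{p_{\lfloor L/2\rfloor}}$. Take the intervals $I_{p_0}=[0,10]$, $I_{p_1}=[9,20]$, $I_{p_2}=[19,30]$, $I_{u_0}=[8,9.5]$, $I_{u_1}=[8.4,15]$, $I_{u_2}=[14,16]$. Here $p_1$ is adjacent to all other vertices (so it lies on no induced $P_4$), $p_2$ is a pendant neighbor of $p_1$, and the remaining four vertices induce a paw; one checks there is no induced path on four vertices, so $p_0p_1p_2$ is a longest induced path, with $L=2$, $m=1$, $q=20$. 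Then $S=\{p_1,p_2\}$ and the unique component of $G-S$ is $\{p_0,u_0,u_1,u_2\}$, which contains the induced path $u_0u_1u_2$ of length $2>\lfloor L/2\rfloor=1$. The splicing rescue fails here for exactly the reason one should worry about in general: the only vertex of $S$ adjacent to an end of $Q=u_0u_1u_2$ is $p_1$, which is also adjacent to the interior of $Q$, so every spliced walk has a chord --- and indeed $G$ has no induced path longer than $P$, so no contradiction with maximality is available. Consequently your induction makes no progress on the measure $L$ at this step (the recursion only guarantees $\ccen(G)\le|S|+\max_H\ccen(H)$ with $L(H)=L(G)$, i.e., a bound of $\omega(G)$ times the recursion depth, which is not controlled by $\log L$). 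This is not a routine technicality to be filled in: either the longest induced path and the cut point must be chosen much more carefully, or the recursion must be driven by a different measure. (A secondary, fixable issue: the right endpoints along an induced path need not be strictly increasing at the two extremal vertices, whose intervals may be nested inside their neighbors'.)
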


\subsection*{Our contribution}

This paper is an exploratory work on the topic of linear colorings. First, we investigate graph classes where the general polynomial bound given by \Cref{th:polybd} can be substantially improved. 
We give quadratic bounds for any minor-closed graph class, chordal graphs, and circular-arc graphs. 
The latter two results generalize the bound of \Cref{th:intv} for interval graphs. 
In the case of graphs of bounded treewidth, we provide a linear bound. 
This translates to a bound of 3.7 on the ratio $\ccen(T)/\clin(T)$ for every tree $T$ (Theorem~\ref{thm:trees}), improving the non-constant $\log \Delta(T)$ of Theorem~\ref{th:treedeg}. 
The proofs of these bounds rely on a combination of the results of \cite{czerwinski2021improved} on unavoidable subgraphs of graphs of large treedepth (see~\Cref{th:wd}) and a result of \cite{bose2022linear} about the linear chromatic number of pseudogrids (see~\Cref{th:lineargrid}).

We confirm \Cref{conj:main} in several restricted graph classes such as caterpillars (for which we show that the two parameters differ by at most one) and several graph classes for which we can show that linear and centered colorings coincide: $(P_3+P_1)$-free graphs, (claw, net)-free graphs (which contain in particular the proper interval graphs), complete multipartite graphs, and complements of rook's graphs.\footnote{See \cref{sec:lin=cen} for definitions.}
This is done by a close inspection of the structure of the considered graph classes.
Note that the result for proper interval graphs cannot be generalized to the class of interval graphs, as shown by the caterpillars.
Let us also remark that if there is a constant $c>0$ such that $\ccen(G)\le c\cdot \clin(G)$ for every chordal graph, then $c\ge 2$ due to the aforementioned construction from~\cite{kun2021polynomial}.
Our results in this direction are summarized in \Cref{table:sum}.
\begin{table}
\centering
\begin{tabular}{lll}
 Graph class & Upper bound for $\ccen(G)$ $\quad$ & Reference\\
 \hline
 any minor-closed graph class $\quad$ & $\Oh(\clin(G)^2)$& \Cref{th:minclo} \\
 chordal, circular-arc & $\Oh(\clin(G)^2)$& \Cref{cor:chordal}\\
 $G$ with $\tw(G)\leq t$ & $3.7\cdot(t+1) \cdot (\clin(G) + 1/\log 3)$ & \Cref{th:lineartw}\\
 trees  & $3.7\cdot \clin(G)$ & \Cref{thm:trees}\\
 caterpillars & $\clin(G) + 1$ & \Cref{th:cat}\\
 $(P_3+P_1)$-free graphs & $\clin(G)$ & \Cref{th:p3p1}\\
 (claw,net)-free & $\clin(G)$ & \Cref{th:CN-free}\\
 complete multipartite graphs & $\clin(G)$ & \Cref{compbip}\\
 co-rook's graphs & $\clin(G)$ & \Cref{corook}
\end{tabular}
\caption{Summary of our results regarding  \Cref{conj:main}.}
\label{table:sum}
\end{table}

The fact that the centered and the linear chromatic numbers coincide for the class of complete multipartite graphs follows from the analogous result for cographs due to Kun et al.~\cite{kun2021polynomial}.
However, for complete multipartite graphs, as well as for complements of rook's graphs, we determine the exact values of these parameters.

We then turn to another aspect of the linear chromatic number: obstructions. 
The class of graphs with linear chromatic number at most some constant $k$ is closed under subgraphs, so we can define its \emph{obstruction set} (with respect to the subgraph relation) as the set of subgraph-minimal elements that do not belong to the class.
Obstruction sets of graph classes closed under a certain graph containment relation are interesting, especially when they have bounded size, as they provide characterizations in terms of forbidden substructures in the spirit of Kuratowski's theorem and, in case of the subgraph relation, immediately lead to polynomial-time recognition algorithms.
In the case of the centered chromatic number, obstruction sets can be defined similarly. 
They have been shown to have finite size by Dvo{\v{r}}{\'a}k,  Giannopoulou, and Thilikos \cite{dvorak2012forbidden} (lower bounds can be found in the same paper).
 We observe that this property also holds for obstructions for bounded linear chromatic number (Corollary~\ref{cor:finite-subgraph}).
In \cite{dvorak2012forbidden} the set of obstructions for centered chromatic number at most $k$ is given for $k\in \{1,2,3\}$. (See also \cite{kuhn2025computing} for complements about obstructions with respect to the induced subgraph relation.)
By revisiting their proof, we provide obstruction sets for linear chromatic number at most $k$, for $k\in \{1,2,3\}$.

We conclude with algorithmic aspects. 
It is known that computing the centered chromatic number is NP-complete \cite{MR1642971}. We show that this also holds for computing the linear chromatic number (\Cref{th:npc}). Finally, we provide an FPT algorithm for deciding if a graph has linear chromatic number at most~$k$ (\Cref{th:fpt}).

\paragraph{Organization of the paper.}
In \Cref{sec:prelim} we introduce the necessary terminology and survey the basic properties of linear chromatic number. 
\Cref{sec:general} is devoted to the proofs of improved bounds for minor-closed graph classes, graphs of bounded treewidth, trees, and certain intersection graphs. In \Cref{sec:lin=cen} we investigate the classes where the two chromatic numbers are equal or differ by at most one. In \Cref{sec:obs} we discuss subgraph-obstructions to bounded linear chromatic number, with explicit obstruction sets for the values $1,2,3$. Algorithmic results are proved in \Cref{sec:algo}. We conclude in \Cref{sec:concl}.

\section{Preliminaries}
\label{sec:prelim}
\subsection{Notations and definitions}

\paragraph{Basics.}

In this paper logarithms are binary. 
Unless stated otherwise, we use standard graph theory terminology. The \emph{clique number} of a graph $G$ is the maximum order of a clique in $G$ and is denoted by $\omega(G)$. 
The \emph{complement} $\overline{G}$ of $G$ is the graph obtained by reversing the adjacency relation. 
For a subset $X\subseteq V(G)$, we use $G-X$ to refer to the graph obtained from $G$ after the deletion of the vertices in $X$.
A graph is \emph{subcubic} if its maximum degree is at most $3$.

For every $k\in \N$, $P_k$ is the path on $k$ vertices.
We denote by $P_3+P_1$ the disjoint union of the paths $P_3$ and $P_1$ and by \emph{paw} the complement of $P_3+P_1$.
The \emph{$k\times k$ grid} is the Cartesian product of two copies of $P_k$ and we denote it by $\grid_k$.
The complete binary tree with $k$ levels is denoted by~$B_k$.
A \emph{star} is any tree with a vertex adjacent to all other vertices.
A \emph{star forest} is a graph every component of which is a star.
A graph is \emph{chordal} if it has no induced cycle of length at least $4$.

\paragraph{Graph containment relations.}
We say that a graph $H$ is a \emph{minor} of a graph $G$ if a graph isomorphic to $H$ can be obtained from a subgraph of $G$ by contracting edges.
A graph class is \emph{minor-closed} if for every graph $G$ in the class, every minor of $G$ is also in the class, and it is a \emph{proper} minor-closed class if it is not the class of all graphs.
Given two graphs $G$ and $H$, we say that $G$ \emph{contains} $H$ if $H$ is isomorphic to a (not necessarily induced) subgraph of $G$, and that $G$ is \emph{$H$-free} if no induced subgraph of $G$ is isomorphic to $H$ and, more generally, that $G$ is $(H_1,H_2)$-free if no induced subgraph of $G$ is isomorphic to one of $H_1$ and $H_2$.
A graph $G$ is a \emph{subdivision} of a graph $H$ if $G$ is obtained from $H$ by subdividing edges.
A graph $H$ is a \emph{topological minor} of a graph $G$ if $G$ contains a subdivision of $H$.

\paragraph{Tree decompositions.}
A \emph{tree decomposition} of a graph $G$ is a pair $(T, \{X_t\}_{t\in V(T)})$ where $T$ is a tree and $X_t\subseteq V(G)$ for every $t\in V(T)$, with the following properties: $V(G) = \bigcup_{t\in V(T)}X_t$; for every $u\in V(G)$, the set $\{t\in V(T) \mid u \in X_t\}$ induces a connected subgraph of $T$; and for every edge $uv\in E(G)$ there is a $t\in V(T)$ such that $\{u,v\}\subseteq X_t$. The $X_t$'s are the \emph{bags} of the decomposition. The maximum size of a bag minus one is the \emph{width} of the decomposition. The minimum width of a tree decomposition of $G$ is its \emph{treewidth}, which we denote by $\tw(G)$.

\paragraph{Colorings.}
A \emph{graph coloring} is a function mapping the vertices of a graph to integers, referred to as \emph{colors}. 
A coloring is \emph{linear} if every path has a unique color and \emph{centered} if every connected subgraph has a unique color. The minimum number of colors (i.e., the cardinality of the image) of a linear (resp.\ centered) coloring of a graph $G$ is its \emph{linear chromatic number} (resp.\ \emph{centered chromatic number}), and we denote it by $\clin(G)$ (resp.\ $\ccen(G)$). As noted in the introduction, $\ccen$ is the parameter most often called \emph{treedepth} (see \cite[Proposition~6.6]{nevsetvril2012sparsity}).
%
%
%

\subsection{Centered versus linear}

In this section we survey the basic properties of the linear chromatic number. 
Not all the results stated in this section are used later in the paper, but they help to understand the main differences between linear and centered colorings.

From the definition we have the following inequality.
\begin{remark}
 \label{rem:weaker}
For every graph $G$, $\clin(G) \leq \ccen(G).$
\end{remark}

It is well known that the centered chromatic number does not increase when taking minors (see for instance \cite[Lemma~6.2]{nevsetvril2012sparsity}).
\begin{proposition}\label{prop:minorclosed}
    If $H$ is a minor of $G$, then $\ccen(H) \leq \ccen(G)$.
\end{proposition}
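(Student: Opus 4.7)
My plan is to decompose the minor relation into its three generating operations---vertex deletion, edge deletion, and edge contraction---and verify that $\ccen$ is non-increasing under each of them. For the two deletion cases, restricting any centered coloring of $G$ to the vertex set of the resulting graph $G'$ yields a centered coloring of $G'$, because every connected subgraph of $G'$ is also a connected subgraph of $G$; hence $\ccen(G') \leq \ccen(G)$ in both cases.

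The substantive case is edge contraction, which I would handle by induction on $|V(G)|$ using the recursive description of $\ccen$ that follows directly from its definition: $\ccen(G) = \max_C \ccen(C)$ over the components of $G$ when $G$ is disconnected, and $\ccen(G) = 1 + \min_{x \in V(G)} \ccen(G - x)$ when $G$ is connected and nonempty. The forward direction of the latter is obtained by peeling off a center of $G$, and the reverse direction by extending a centered coloring of $G - x$ with a fresh color on $x$. The same extension argument shows that $\ccen(J) \leq 1 + \ccen(J - y)$ for every graph $J$ and every vertex $y \in V(J)$, not only the minimizer.

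Now fix an edge $uv \in E(G)$ and let $w$ be the vertex of $G' := G/uv$ obtained from the contraction. If $G$ is disconnected, the contraction lies entirely in the component containing $u, v$, and the inductive hypothesis on that component closes the case. Otherwise, pick $x^*$ realizing $\ccen(G) = 1 + \ccen(G - x^*)$. If $x^* \notin \{u, v\}$, then $(G - x^*)/uv = G' - x^*$, so by induction $\ccen(G' - x^*) \leq \ccen(G - x^*) = \ccen(G) - 1$, giving $\ccen(G') \leq 1 + \ccen(G' - x^*) \leq \ccen(G)$. If $x^* \in \{u, v\}$, say $x^* = u$, then $G' - w = G - \{u, v\}$, so the vertex-deletion monotonicity of $\ccen$ applied to $v$ in $G - u$ yields $\ccen(G' - w) \leq \ccen(G - u) = \ccen(G) - 1$, and again $\ccen(G') \leq 1 + \ccen(G' - w) \leq \ccen(G)$.

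The main obstacle is the sub-case $x^* \in \{u, v\}$: a direct attempt to construct a centered coloring of $G'$ by transferring one of $c(u)$ or $c(v)$ onto $w$ can fail to be centered---for instance, contracting an end-edge of the centered coloring $1,2,1$ of $P_3$ and reusing the endpoint color produces a monochromatic edge. Routing the argument through the recursive characterization of $\ccen$ sidesteps this pitfall by reducing the contraction step to the already-settled monotonicity under vertex deletion.
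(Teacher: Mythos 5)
Your argument is correct and complete. Note, however, that the paper does not actually prove this proposition: it is stated as a well-known fact with a citation to Ne\v{s}et\v{r}il and Ossona de Mendez, so there is no in-paper proof to compare against. Your route---reducing the minor relation to vertex deletion, edge deletion, and a single edge contraction, handling the deletions by restriction of the coloring, and handling contraction by induction through the recursive characterization $\ccen(G) = 1 + \min_{x} \ccen(G-x)$ for connected $G$ (with the companion inequality $\ccen(J)\le 1+\ccen(J-y)$ for every $y$)---is essentially the standard textbook argument, just phrased in the language of centered colorings rather than elimination forests. The two sub-cases of the contraction step, in particular the identities $(G-x^*)/uv = G'-x^*$ and $G'-w = G-\{u,v\}$, are verified correctly, and you are right that the na\"ive attempt to transfer a color of $u$ or $v$ onto the contracted vertex can fail, so routing through the recursion is the right move.
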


What about the linear chromatic number?
A simple observation shows that it does not increase when taking subgraphs.

\begin{observation}\label{obs:monolin}
    If $H$ is a subgraph of $G$, then $\clin(H) \leq \clin(G)$.
\end{observation}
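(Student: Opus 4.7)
The plan is to show that an optimal linear coloring of $G$ restricts to a linear coloring of $H$. Concretely, let $c\colon V(G)\to \N$ be a linear coloring of $G$ using exactly $\clin(G)$ colors, and let $c' = c\restriction_{V(H)}$ denote its restriction to the vertex set of $H$. The goal is to verify that $c'$ is itself a linear coloring of $H$, which immediately yields $\clin(H)\le \clin(G)$.

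To verify this, I would take an arbitrary path $P$ in $H$. Since $H$ is a subgraph of $G$, the sequence of vertices and edges of $P$ also forms a path in $G$, so the definition of linear coloring applied to $c$ guarantees that some vertex $v$ of $P$ carries a color (under $c$) that no other vertex of $P$ carries. Because $V(P)\subseteq V(H)$, the colors assigned by $c$ and by $c'$ to the vertices of $P$ are identical, so $v$ also has a unique color along $P$ with respect to $c'$. As $P$ was arbitrary, $c'$ is a linear coloring of $H$.

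There is no real obstacle here: the argument is purely a matter of unwinding definitions, and in particular it does not require $H$ to be an induced subgraph, because every path of $H$ is automatically a path of $G$ regardless of whether $H$ inherits all edges of $G$ on its vertex set. The image of $c'$ is contained in the image of $c$, so at most $\clin(G)$ colors are used, which completes the argument.
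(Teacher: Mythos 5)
Your argument is correct and is essentially the paper's own proof: both restrict an optimal linear coloring of $G$ to $V(H)$ and use the fact that every path of $H$ is a path of $G$. The proposal just spells out the details more explicitly.
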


\begin{proof}
    Every path of $H$ is a path of $G$. So the coloring of $H$ induced by a coloring of $G$ witnessing $\clin(G)$ is a linear coloring of $H$.
\end{proof}

We can also easily get the following by reserving a color for the deleted vertex.
\begin{observation}\label{obs:lininduced}
    If $H$ is an induced subgraph of $G$ obtained by removing a single vertex, then $\clin(G) \leq \clin(H)+1$.
\end{observation}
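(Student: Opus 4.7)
The plan is a direct extension argument, exactly in the spirit of the hint preceding the statement (``reserving a color for the deleted vertex''). Let $v$ be the vertex such that $H = G - v$, and let $c \colon V(H) \to \{1, \dots, \clin(H)\}$ be an optimal linear coloring of $H$. I would define $c' \colon V(G) \to \{1, \dots, \clin(H)+1\}$ by setting $c'(u) = c(u)$ for every $u \in V(H)$ and $c'(v) = \clin(H)+1$, a fresh color used nowhere else.

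To check that $c'$ is a linear coloring of $G$, consider an arbitrary path $P$ in $G$. Split into two cases. If $v \notin V(P)$, then $P$ is a path of $H$, so by the linear coloring property of $c$ there is a vertex of $P$ with a color unique on $P$ under $c$, and this uniqueness is preserved under $c'$ since $c'$ agrees with $c$ on $V(H)$. If $v \in V(P)$, then $v$ itself carries the color $\clin(H)+1$, which by construction appears on no other vertex of $G$, hence on no other vertex of $P$. In both cases $P$ has a vertex with a unique color, so $c'$ is a linear coloring.

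Since $c'$ uses $\clin(H)+1$ colors, we conclude $\clin(G) \le \clin(H)+1$. There is no real obstacle here: the only thing to be a little careful about is that the new color assigned to $v$ must be chosen outside the image of $c$, so that its uniqueness on any path through $v$ is automatic.
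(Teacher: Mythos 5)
Your proof is correct and is exactly the argument the paper has in mind: the statement is introduced with the remark that one can ``reserve a color for the deleted vertex,'' and your case analysis on whether a path contains $v$ fills in the (omitted) details faithfully. Nothing further is needed.
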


What happens with minors? 
\cref{obs:monolin,obs:lininduced} imply that contracting an edge cannot increase the linear chromatic number by more than one. 
A further partial answer is given by the following.

\begin{observation}
There is a polynomial $f$ such that if $H$ is a minor of $G$, then $\clin(H) \leq f(\clin(G))$.
\end{observation}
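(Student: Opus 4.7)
The plan is to chain together results already stated in the excerpt. Since $H$ is a minor of $G$, I will apply the minor-monotonicity of the centered chromatic number (\Cref{prop:minorclosed}) to get $\ccen(H) \le \ccen(G)$. Then I can sandwich $\clin(H)$ between $\clin(H) \le \ccen(H)$ (by \Cref{rem:weaker}) and the polynomial upper bound $\ccen(G) \le \clin(G)^{10} \cdot (\log \clin(G))^{\Oh(1)}$ from \Cref{th:polybd}.

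Concretely, the argument goes:
\begin{equation*}
\clin(H) \;\le\; \ccen(H) \;\le\; \ccen(G) \;\le\; \clin(G)^{10} \cdot (\log \clin(G))^{\Oh(1)}.
\end{equation*}
The rightmost expression is bounded above by a polynomial in $\clin(G)$, since for some absolute constant $c$ the factor $(\log k)^c$ is eventually dominated by $k$, so one may take for instance $f(k) = k^{11}$ for all sufficiently large $k$ (and adjust the leading constant to cover the finitely many small cases). This yields a polynomial $f$ as required.

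There is essentially no obstacle here once the three ingredients are in place: the inequality $\clin \le \ccen$, the minor-monotonicity of $\ccen$, and the polynomial relation between $\ccen$ and $\clin$ of \Cref{th:polybd}. The only subtlety worth pointing out is that linear chromatic number is itself \emph{not} known to be minor-monotone (indeed, the whole question of how it behaves under minors is the reason the statement is formulated as it is), so one cannot conclude $\clin(H) \le \clin(G)$ directly; the detour through $\ccen$ is what makes the reduction work, at the cost of a polynomial blow-up inherited from \Cref{th:polybd}.
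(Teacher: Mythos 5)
Your proof is correct and follows exactly the same route as the paper: the chain $\clin(H) \le \ccen(H) \le \ccen(G) \le f(\clin(G))$ via \Cref{rem:weaker}, \Cref{prop:minorclosed}, and \Cref{th:polybd}, with $f$ a degree-$11$ polynomial absorbing the polylogarithmic factor. Your closing remark about why the detour through $\ccen$ is necessary is a correct and worthwhile observation, matching the paper's surrounding discussion.
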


\begin{proof}
By \Cref{rem:weaker}, \Cref{prop:minorclosed}, and \cref{th:polybd}, respectively, we have $\clin(H)\leq \ccen(H) \leq \ccen(G) \leq f(\clin(G))$, where $f$ is a polynomial of degree $11$ whose existence is implied by \cref{th:polybd}.
\end{proof}

However, in general it is not true that the linear chromatic number is monotone under taking minors (in fact, not even under taking topological minors), as shown by the following construction due to Jana Masa\v{r}\'ikov\'a and Wojciech Nadara (personal communication, 2024). We note that \Cref{conj:main}, if true, would imply that $\clin(H)\leq 2\cdot \clin(G)$ for every minor $H$ of $G$.

\begin{proposition}\label{prop:mn}
    There is a graph $G$ with $\clin(G) = 4$ and  a topological minor $H$ of $G$ with $\clin(H)>4$.
\end{proposition}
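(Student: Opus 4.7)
The plan is to exhibit an explicit pair $(G,H)$ for which $H$ is a topological minor of $G$, $\clin(G)=4$, and $\clin(H)\geq 5$. The conceptual reason the proposition is not immediately ruled out by \Cref{obs:monolin} is the following: if $G$ is a subdivision of $H$, then any path of $H$ lifts to a \emph{longer} path of $G$ that contains the subdivision vertices along the way, and the uniquely-colored witness on the lifted path may very well be a subdivision vertex, which has no counterpart in $H$. Consequently, inserting subdivision vertices can ``break'' problematic paths in $H$ and potentially reduce the linear chromatic number.

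To realize this idea I would first fix a small graph $H$ with $\clin(H)\geq 5$. A convenient route is to pick $H$ to be a small, sufficiently dense graph---or a graph obtained by gluing together a few short forcing gadgets---so that the lower bound $\clin(H)\geq 5$ can be certified by a finite case analysis over color classes: for each symmetry-reduced candidate $4$-coloring of $H$, one exhibits a path on which no color is unique.

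Next I would construct $G$ as a subdivision of $H$ (subdividing certain edges a small number of times) and give a concrete $4$-coloring of $G$. The coloring would be tailored so that the subdivision vertices act as uniquely-colored witnesses on precisely the paths that forced a fifth color in $H$. Linearity would then be verified by a case analysis on the paths of $G$: every path of $G$ projects down to a walk in $H$ together with its intermediate subdivision vertices, so the analysis is finite and controlled by the combinatorial structure of $H$.

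The main obstacle is producing the coloring and verifying it against every path of $G$: four colors must suffice \emph{simultaneously} on many long and varied paths, and simply subdividing a graph with $\clin = 5$ need not reduce its linear chromatic number. Achieving both inequalities $\clin(H)\geq 5$ and $\clin(G)\leq 4$ with a single small example requires specific ingenuity in the choice of $H$ and in the color assignment on its subdivision, which is the content of the Masa\v{r}\'ikov\'a--Nadara construction.
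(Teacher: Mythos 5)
Your proposal correctly identifies the conceptual reason the statement is not contradicted by \Cref{obs:monolin}: a path of $H$ lifts to a longer path of $G$ whose unique-color witness may be a subdivision vertex with no counterpart in $H$. This matches the mechanism behind the paper's example, where $H$ is obtained from $G$ by contracting a single edge (so $G$ subdivides one edge of $H$ exactly once). However, what you have written is a plan, not a proof. The proposition is an existence statement, and every piece of actual mathematical content is deferred: you never name the graphs $G$ and $H$, never give the $4$-coloring of $G$, and never carry out either of the two finite case analyses (that the proposed coloring of $G$ is linear, and that no $4$-coloring of $H$ is linear). You even acknowledge this explicitly in your last sentence, where you state that producing the example ``is the content of the Masa\v{r}\'ikov\'a--Nadara construction'' --- i.e., you are citing the result you are supposed to prove.

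The gap is not a technicality. Nothing in your outline guarantees that a suitable pair $(G,H)$ exists: as you yourself note, subdividing a graph with $\clin=5$ need not bring its linear chromatic number down to $4$, so the existence claim genuinely hinges on finding a specific example and verifying it. The paper does this with an $11$-vertex graph $G$ and a particular coloring $\psi$, establishing $\clin(G)=4$ by a short path-by-path analysis organized around which of the two ``rare'' colors a putative centerless path must see twice, and establishing $\clin(H)>4$ by a longer exhaustive analysis over which pair of colors can be unique on the $6$-cycle of $H$. To repair your proposal you would need to supply concrete graphs, a concrete coloring, and both verifications; without them the argument proves nothing.
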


\begin{proof}
Let $G$ be the graph from \cref{fig:nadara} (left) and $\psi$ be the coloring of $G$ specified by the numbers on its vertices in the aforementioned figure. Let $H$ be the graph from \cref{fig:nadara} (right) and observe that $H$ is a topological minor of $G$ that can be obtained by contracting the edge $kc$.
We show that $G$ and $H$ satisfy the statement of the proposition.
First, we verify that $\psi$
is a linear coloring by checking all the paths in $G$. 
Towards a contradiction suppose there is a path $P$ that does not have a center. 
Then, $P$ must see one of the colors 1 and 2.
Suppose it sees 1 but not 2.
Then, in order to see color 1 twice, $P$ contains vertices $g$, $f$, and $a$ in order.
It follows that $P$ contains a center of color 0 or 3, a contradiction.
By symmetry, it remains to analyze the case when $P$ sees both 1 and 2.
In order to see colors 1 and 2 twice, $P$ starts in $g$ and ends in $h$. It cannot use $e$
 (otherwise $P=gfedh$, admitting $g$ as a center) so it starts with $gfa$ and ends with $cdh$. 
 In the rest of the graph (i.e., $G[\{i,b,j,k\}]$) no component contains twice the color $0$, so any way to complete the path contains a center of color 0, a contradiction.

\begin{figure}[ht]
    \centering
    \includegraphics{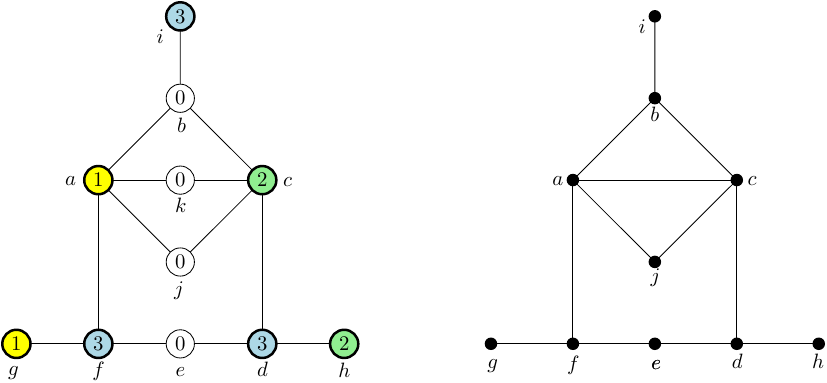}
    \caption{A graph $G$ with $\clin(G)=4$ (left) and a topological minor $H$ of $G$ (right) with $\clin(H)>4$.}
    \label{fig:nadara}
\end{figure}

Note that three colors are not enough for a linear coloring as $G$
contains a path on 9 vertices (for instance, $\mathit{gfedcjabi}$), which requires $4$ colors, as we will note in \Cref{lem:tdpk}.
We conclude that $\clin(G)=4$.

Now, we show that $\clin(H)>4$. Suppose, for contradiction,
that there exists a linear coloring $\varphi$ of $H$ using four colors,
say $\{0,1,2,3\}$. Observe that any cycle in $H$ has at least two centers
in any linear coloring (since otherwise removing a unique center would result in a path without a center), in particular let $1,0,2$ be colors of $a,b$,
and $c$ in $\varphi$, respectively. We split the analysis according to pairs of unique
colors in the cycle $C$ with vertices $a,b,c,d,e,f$. Observe that two centers
cannot be adjacent in $C$, as otherwise the rest of $C$ is a four-vertex
path, which needs at least three colors in any linear coloring. Therefore,
if 0 is a unique color in $C$, neither 1 nor 2 is unique in $C$.
Moreover, observe that 3 also is not a unique color in $C$ as otherwise
the vertices from $C$ colored by 1 and 2 would create a four-vertex path
in $H$. This is a contradiction, since the cycle has two unique colors. 
It follows that 0 is not a unique color in $C$.

Assume for
now that colors 1 and 2 are unique in $C$. It follows that $\varphi(f)=\varphi(d)=3$
and $\varphi(e)=0$ or vice versa. In both cases, $\varphi(g),\varphi(h)\in\{1,2\}$
as any of them creates a four-vertex path with vertices $f,e,d$.
Consider paths $\mathit{bcdefg}$ and $\mathit{bafedh}$ in the case where $\varphi(e)=0$
and paths $\mathit{bcdh}$ and $\mathit{bafg}$ in the case where $\varphi(e)=3$.
Together with the previous argument, these paths force $g$ and
$h$ to have different colors in $\{1,2\}$. 
However, the path $\mathit{gfacdh}$ has no center. 
It follows that at least one of the colors 1 and 2 is not unique in $C$.

Finally, assume that 1 and 3 are unique colors in
$C$. As 2, and also 0, are not unique in $C$ by the previous analysis,
we have that the vertices $f,e$, and $d$ have different colors in
$\{0,2,3\}$. We now determine colors of $i$ and $j$. As $\varphi(i)\neq\varphi(b)$
and path $\mathit{fedcbi}$ enforces $\varphi(i)\neq3$, we have $\varphi(i)\in\{1,2\}$.
Hence, $\varphi(j)=3$ by considering path $\mathit{ibaj}$ or $\mathit{ibcj}$ and triangle $\mathit{acj}$. Moreover, $\varphi(i)=2$, by path $\mathit{jcdefabi}$.
As $\varphi(d)\neq\varphi(c)$ and $\varphi(d)\neq0$ by path $\mathit{ibcd}$, $\varphi(d)=3$.
It follows $\varphi(e)=0$ by path $\mathit{jcde}$, and then $\varphi(f)=2$.
Consider the color of vertex $g$: $\varphi(g)\neq0$ by path $\mathit{jcdefg}$,
$\varphi(g)\neq1$ by path $\mathit{dcjafg}$, $\varphi(g)\neq\varphi(f)=2$, and
$\varphi(g)\neq3$ by path $\mathit{bcdefg}$.
This contradicts that $\varphi$ is a linear coloring of $H$ with unique colors 1 and 3 in $C$, but also that $\varphi$ is a linear coloring of $H$ as we
consider all cases up to the symmetry of $1$ and $2$.
This shows that \hbox{$\clin(H)>4$}.
\end{proof}

On paths, the two invariants coincide (because every connected subgraph of a path is a path) and their value is logarithmic in their length (see \cite[Section~6.2]{nevsetvril2012sparsity}).
\begin{lemma}\label{lem:tdpk}
    For every integer $n$, $\ccen(P_n) = \clin(P_n) = \lceil \log(n+1) \rceil$.
\end{lemma}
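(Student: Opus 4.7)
The plan is to first observe that on paths the two parameters coincide, then pin down their common value by a matching upper and lower bound argument, each of which is essentially a halving recurrence.

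First, note that every connected subgraph of $P_n$ is itself a path. Consequently, a coloring that puts a uniquely colored vertex on every path automatically puts one on every connected subgraph. In other words, every linear coloring of $P_n$ is a centered coloring, so $\ccen(P_n) \le \clin(P_n)$. Combined with \cref{rem:weaker} this yields $\ccen(P_n) = \clin(P_n)$, and it only remains to evaluate this common quantity. Denote it by $f(n)$; by convention $f(0)=0$ and $f(1)=1$, matching $\lceil \log(n+1)\rceil$ in both cases.

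For the upper bound I would give an explicit recursive coloring. Label the vertices $v_1,\dots,v_n$ along the path, assign a fresh color to $v_{\lceil n/2 \rceil}$, and recurse on the two subpaths on $\{v_1,\dots,v_{\lceil n/2\rceil - 1}\}$ and $\{v_{\lceil n/2 \rceil + 1},\dots,v_n\}$, reusing the same palette of colors for each side. Each subpath has at most $\lfloor (n-1)/2 \rfloor \le (n-1)/2$ vertices, so this yields $f(n) \le 1 + f(\lfloor (n-1)/2 \rfloor)$. An easy induction on $k = \lceil \log(n+1)\rceil$ (the base case being $n\in\{0,1\}$) then gives $f(n) \le \lceil \log(n+1) \rceil$: indeed, $n\le 2^k-1$ implies $\lfloor (n-1)/2 \rfloor \le 2^{k-1}-1$.

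For the lower bound, I would proceed by strong induction on $n$. Consider any linear coloring $\varphi$ of $P_n$ with $n\ge 2$. Applied to the path $P_n$ itself, the defining property of $\varphi$ produces a vertex $v_i$ whose color is unique in $V(P_n)$. Deleting $v_i$ leaves two subpaths of sizes $i-1$ and $n-i$, summing to $n-1$, so the larger one, call it $P'$, has at least $\lceil (n-1)/2 \rceil$ vertices. The restriction of $\varphi$ to $V(P')$ is still a linear coloring, and it does not use the color of $v_i$, so $\clin(P_n) \ge 1 + \clin(P')$ and by induction $f(n) \ge 1 + \lceil \log(\lceil (n-1)/2 \rceil + 1)\rceil = 1 + \lceil \log \lceil (n+1)/2 \rceil \rceil$.

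The only arithmetic one needs to check is that $1 + \lceil \log \lceil m/2 \rceil \rceil \ge \lceil \log m \rceil$ for every integer $m \ge 1$, which is immediate by distinguishing the parity of $m$ (and equality in the odd case is inherited from the matching upper bound). This closes the induction and yields $f(n) = \lceil \log(n+1) \rceil$. I don't anticipate any real obstacle: the content is entirely the halving recurrence, and the mild book-keeping between $\lceil \cdot \rceil$ and $\lfloor \cdot \rfloor$ in the two directions is the only thing that requires a moment of care.
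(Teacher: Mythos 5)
Your overall strategy is sound and matches the standard argument; note that the paper itself does not prove this lemma but cites it from Ne\v{s}et\v{r}il and Ossona de Mendez, recording only (as you do) that the two parameters coincide on paths because every connected subgraph of a path is a path. Your lower bound is correct: a center of the whole path splits it into two subpaths whose orders sum to $n-1$, the larger has at least $\lceil (n-1)/2\rceil$ vertices, its induced coloring is linear and avoids the center's color, and the arithmetic $1+\lceil\log\lceil (n+1)/2\rceil\rceil\ge\lceil\log(n+1)\rceil$ closes the induction.

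There is, however, an arithmetic error in your upper bound that makes the recurrence false as stated. After you give $v_{\lceil n/2\rceil}$ a fresh color, the left subpath has $\lceil n/2\rceil-1$ vertices but the right subpath has $n-\lceil n/2\rceil=\lfloor n/2\rfloor$ vertices, which exceeds $\lfloor (n-1)/2\rfloor$ whenever $n$ is even. The recurrence $f(n)\le 1+f(\lfloor (n-1)/2\rfloor)$ that you write down would give $f(4)\le 1+f(1)=2$, contradicting $\clin(P_4)=3$ (any proper $2$-coloring of $P_4$ alternates $1,2,1,2$, so the whole path has no center). The correct recurrence is $f(n)\le 1+f(\lfloor n/2\rfloor)$, and your induction still closes with it, since $n\le 2^k-1$ implies $\lfloor n/2\rfloor\le 2^{k-1}-1$. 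It would also be worth stating explicitly why reusing the same palette on the two sides is harmless: any path meeting both sides passes through the uniquely colored middle vertex, and any path contained in one side has a center by induction. With these repairs the proof is complete.
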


\Cref{th:treedeg} and \Cref{lem:tdpk} imply the following.

\begin{corollary}\label{cor:treedeg}
Let $T$ be a subcubic tree.
Then $\ccen(T) \leq (\log 3)\cdot \clin(T)$.
\end{corollary}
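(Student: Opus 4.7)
The plan is a short case distinction on the maximum degree of $T$, applying either \Cref{th:treedeg} or \Cref{lem:tdpk}, since those two results already cover every possible value of $\Delta(T)$.

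First I would split according to whether $\Delta(T) \geq 3$ or $\Delta(T) \leq 2$. In the first case, since $T$ is subcubic we must have $\Delta(T) = 3$, so \Cref{th:treedeg} applies with $\Delta = 3$ and yields directly
\[
\ccen(T) \;\leq\; (\log 3)\cdot \clin(T),
\]
which is exactly the desired inequality.

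In the second case, $T$ has maximum degree at most $2$, so $T$ is a path (possibly a single vertex or edge). By \Cref{lem:tdpk} we then have $\ccen(T) = \clin(T)$, and since $\log 3 > 1$ this immediately implies $\ccen(T) \leq (\log 3)\cdot \clin(T)$.

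There is really no obstacle here: the corollary is a direct packaging of the two earlier results, once one observes that the hypothesis $\Delta \geq 3$ in \Cref{th:treedeg} is the only case not already covered by \Cref{lem:tdpk}. The only care needed is to notice that the subcubic assumption forces $\Delta(T) = 3$ in the nontrivial case, so the logarithmic factor becomes the constant $\log 3$.
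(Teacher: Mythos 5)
Your proof is correct and matches the paper's intended derivation exactly: the paper gives no explicit argument, stating only that \Cref{th:treedeg} and \Cref{lem:tdpk} imply the corollary, and your case split on $\Delta(T)=3$ versus $\Delta(T)\le 2$ (where $T$ is a path and $\log 3>1$ suffices) is precisely that reasoning spelled out.
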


It is known that the presence in a graph of a large complete binary tree as a minor forces a large centered chromatic number, because of \Cref{{prop:minorclosed}} and the following well-known equality.

\begin{lemma}[Folklore]\label{lem:tdbk}
For every $k\in \N$, $\ccen(B_k) = k$.
\end{lemma}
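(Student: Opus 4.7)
The plan is to prove $\ccen(B_k) = k$ by induction on $k$, treating the upper and lower bounds separately. The base case $k=1$ is immediate since $B_1$ is a single vertex.

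For the upper bound $\ccen(B_k) \leq k$, I would proceed by induction. Assuming $\ccen(B_{k-1}) \leq k-1$, I color each of the two subtrees of the root (which are copies of $B_{k-1}$) with colors from $\{1,\ldots,k-1\}$ using centered colorings given by induction, and assign color $k$ to the root $r$. To verify this is a centered coloring: any connected subgraph $H$ of $B_k$ either avoids $r$, in which case it lies entirely in one of the two subtrees and inherits a center from the inductive coloring, or contains $r$, in which case $r$ itself is the unique vertex with color $k$ in $H$ and hence serves as the center.

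For the lower bound $\ccen(B_k) \geq k$, the key structural observation I would establish is that for every vertex $v \in V(B_k)$, the graph $B_k - v$ contains $B_{k-1}$ as a subgraph. Indeed, if $v$ is the root then $B_k - v$ is the disjoint union of two copies of $B_{k-1}$, while if $v$ lies in one of the two subtrees rooted at a child of the root, then the opposite subtree (a full copy of $B_{k-1}$) survives intact in $B_k - v$. Now consider any centered coloring $\varphi$ of $B_k$. Since $B_k$ is itself a connected subgraph, there is a vertex $v$ whose color is unique in $B_k$; in particular this color is used nowhere in $B_k - v$. The restriction of $\varphi$ to $V(B_k) \setminus \{v\}$ is still a centered coloring of $B_k - v$ (every connected subgraph of $B_k - v$ is also a connected subgraph of $B_k$), so it uses at least $\ccen(B_k - v)$ colors. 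Applying the subgraph containment above together with the fact that $\ccen$ is monotone under taking subgraphs (a consequence of \Cref{prop:minorclosed}), we get $\ccen(B_k - v) \geq \ccen(B_{k-1}) \geq k-1$ by induction. Adding back the unique color used at $v$ yields $\ccen(B_k) \geq k$.

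There is no real obstacle in this proof; the only point requiring a moment of care is verifying that $B_k - v$ contains $B_{k-1}$ as a subgraph for every choice of $v$, which is where a figure or a short case distinction on whether $v$ is the root or not does the job cleanly.
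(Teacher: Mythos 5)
Your proof is correct and follows essentially the same route as the paper's: the upper bound is the depth coloring (your inductive construction produces exactly that coloring up to relabeling), and the lower bound rests on the same key facts, namely that a centered coloring gives $B_k$ a uniquely colored vertex $v$ and that $B_k - v$ still contains a copy of $B_{k-1}$. The only difference is presentational — you run the induction directly while the paper phrases it as a minimal-counterexample contradiction.
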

\begin{proof}
    It is easy to check that coloring each vertex of $B_k$ with its distance to the root is a centered coloring, and it has $k$ colors, so $\ccen(B_k)\leq k$. Conversely, by contradiction suppose that $\ccen(B_k)\geq k$ fails for some value of $k>1$ and let us consider the minimum such value. So $B_k$ admits a centered coloring $\varphi$ with less than $k$ colors. From the definition there is a vertex $v$ to which $\varphi$ assigns a unique color. 
    Observe that $B_k-v$ contains a copy of $B_{k-1}$, that we call $B$. 
    As $\varphi(v)$ is unique, $\varphi$ assigns at most $k-2$ colors to $B$. By minimality of $k$, $B$ has a subgraph with no unique color by $\varphi$, but this is also a subgraph of $B_k$ with this property, a contradiction.
    \end{proof}

A similar statement holds for linear chromatic number, as a consequence of \Cref{cor:treedeg}.
\begin{lemma}
       If $G$ contains $B_k$ as a minor, then $\clin(G) \geq k/\log 3$.
\end{lemma}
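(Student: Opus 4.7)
The plan is to reduce to Corollary~\ref{cor:treedeg}, which controls $\ccen$ in terms of $\clin$ for subcubic trees. The direct approach of writing $\clin(G)\ge \clin(B_k)$ is blocked by \Cref{prop:mn}: the linear chromatic number is not minor-monotone in general. I would get around this by locating inside $G$ not just an abstract minor model of $B_k$, but an actual subgraph that is itself a subcubic tree and has $B_k$ as a minor.

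First, since $B_k$ has maximum degree at most $3$, I would invoke the classical fact that for graphs of maximum degree at most $3$, being a minor is equivalent to being a topological minor. Applied to our situation, this yields a subgraph $B$ of $G$ that is a subdivision of $B_k$; in particular $B$ is itself a subcubic tree, and $B_k$ is a minor of $B$ (obtained by contracting subdivided edges back). Next I would combine three monotonicity / bound statements that are all available earlier in the paper:
\begin{itemize}
\item by \Cref{obs:monolin}, $\clin(G)\ge \clin(B)$ since $B$ is a subgraph of $G$;
\item by \Cref{cor:treedeg}, since $B$ is a subcubic tree, $\ccen(B)\le (\log 3)\cdot \clin(B)$;
\item by \Cref{prop:minorclosed}, since $B_k$ is a minor of $B$, $\ccen(B)\ge \ccen(B_k)$, which equals $k$ by \Cref{lem:tdbk}.
\end{itemize}
Chaining these inequalities gives $k \le \ccen(B) \le (\log 3)\cdot \clin(B) \le (\log 3)\cdot \clin(G)$, i.e. $\clin(G)\ge k/\log 3$.

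The only nontrivial ingredient is the passage from a minor to a topological minor, and this is precisely where the subcubic hypothesis (inherited from $B_k$) pays off; once we are inside a genuine subgraph that is still a subcubic tree, subgraph-monotonicity of $\clin$ and minor-monotonicity of $\ccen$ work in concert, compensating for the failure of minor-monotonicity of $\clin$.
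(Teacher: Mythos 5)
Your proposal is correct and follows essentially the same route as the paper's proof: pass from the $B_k$-minor to a subdivision $B$ of $B_k$ (using that $B_k$ is subcubic), then chain $\ccen(B)\ge \ccen(B_k)=k$ via \Cref{prop:minorclosed} and \Cref{lem:tdbk}, $\clin(B)\ge \ccen(B)/\log 3$ via \Cref{cor:treedeg}, and $\clin(G)\ge\clin(B)$ via \Cref{obs:monolin}. The only difference is expository: you motivate the detour through a topological minor by the failure of minor-monotonicity of $\clin$, which the paper leaves implicit.
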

\begin{proof}
   Since the complete binary tree $B_k$ is subcubic, if $G$ contains $B_k$ as a minor, then it contains a subdivision $B$ of $B_k$ as a subgraph (see \cite[Proposition 1.7.3]{MR4874150}). 
    As subdividing edges never decreases $\ccen$ (\Cref{prop:minorclosed}), and by \Cref{lem:tdbk}, we infer that $\ccen(B)\geq k$.
    By \Cref{cor:treedeg}, $\clin(B)\geq k/ \log 3$.
    By \Cref{obs:monolin}, we get the desired bound.
\end{proof}

We note that the constant $\log 3$ in the statement above is tight because of the following result. This suggests a possible improvement of the ratio in \Cref{conj:main} in the special case of trees.

\begin{theorem}[\cite{kun2021polynomial}]
\label{th:log3}
$\lim\limits_{k\rightarrow \infty} \ccen(B_k) / \clin(B_k)\geq \log 3$.
\end{theorem}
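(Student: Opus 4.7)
Since $\ccen(B_k)=k$ by \Cref{lem:tdbk}, the statement is equivalent to showing that $\clin(B_k)\le k/\log 3 + o(k)$ as $k\to\infty$. The plan is to exhibit, for every positive integer $n$, a rooted binary tree $T_n$ carrying an explicit linear coloring using $n$ colors, such that $T_n$ contains $B_{k(n)}$ as a subgraph with $k(n)\ge n\log 3 - o(n)$. By \Cref{obs:monolin} this yields $\clin(B_{k(n)})\le \clin(T_n)\le n$, and since $\clin(B_k)$ is monotone in $k$, the ratio $k/\clin(B_k)$ exceeds $\log 3 - \varepsilon$ for every $\varepsilon>0$ and all $k$ large enough.

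The construction I would try is recursive. Set $T_1$ to be a single vertex assigned color $1$, and build $T_{n+1}$ by gluing together three copies of $T_n$ through a small binary gadget whose few vertices are colored with the fresh color $n+1$. Since each recursion step gains one color while roughly tripling the number of vertices, the depth of the binary tree that can be embedded grows by about $\log 3$ per color, and after $n$ steps we obtain an embedded $B_{k(n)}$ with $k(n)/n \to \log 3$. Achieving the sharp constant $\log 3$ rather than a rounded variant requires amortizing several recursion steps at once, for instance by merging a larger block of copies through a common gadget that uses $O(1)$ fresh colors and gains a proportional extra depth.

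The main obstacle is verifying that the recursive coloring remains linear. Paths lying entirely inside a single copy of $T_n$ inherit a center from the inductive hypothesis, and paths contained in the gadget are short and easy to handle directly. The delicate case is a path that crosses two or three copies: while each traversed copy contributes, by induction, a vertex whose color is unique within that copy, the copies share the palette $\{1,\dots,n\}$, so these candidate centers may collide along the path. The gadget must therefore be designed so that every crossing path traverses exactly one gadget vertex colored $n+1$, providing a center in the fresh color; this balancing between the structure of the gadget and the inherited uniqueness from induction is the technical heart of the proof.
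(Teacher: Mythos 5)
A preliminary remark: the paper does not prove \Cref{th:log3} at all --- it is imported from \cite{kun2021polynomial} as a cited result --- so your proposal can only be judged on its own terms. Your reduction is sound as far as it goes: by \Cref{lem:tdbk} it suffices to show $\clin(B_k)\le k/\log 3+o(k)$, and exhibiting a binary tree $T_n$ with an explicit linear $n$-coloring that contains $B_{k(n)}$ as a subgraph with $k(n)\ge n\log 3-o(n)$ would do this via \Cref{obs:monolin} and the monotonicity of $k\mapsto\clin(B_k)$ (the matching lower bound $\clin(B_k)\ge k/\log 3$ being \Cref{cor:treedeg}). This is indeed the right general strategy. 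The problem is that everything you leave to be ``designed'' is the entire content of the theorem, and the one concrete design principle you do state cannot work.

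Specifically: a binary gadget merging three copies of $T_n$ must have at least two branching vertices, say a root $r$ above copy~$1$ and a vertex $b$ above copies~$2$ and~$3$. At most one of $r,b$ can carry the fresh color $n+1$: if both do and they are adjacent the coloring is not proper (an edge is a $P_2$), and if they both carried it while lying on a common path the color $n+1$ could not serve as a center there. Consequently there exist crossing paths --- those running from copy~$2$ through $b$ into copy~$3$, say, when $r$ is the fresh-colored vertex --- that meet \emph{no} vertex of color $n+1$, directly contradicting your requirement that ``every crossing path traverses exactly one gadget vertex colored $n+1$.'' For such paths the center must come from the reused palette $\{1,\dots,n\}$, and your sketch supplies no invariant guaranteeing one; this is precisely the case any correct construction must resolve, so deferring it as ``the technical heart'' leaves the proof open at its only nontrivial point. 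The depth accounting is also off: with the gadget above, the largest complete binary tree embeddable as a subgraph of $T_{n+1}$ is $B_{k(n)+1}$ (rooted at $b$ over copies~$2$ and~$3$; copy~$1$ hangs at the wrong depth and contributes nothing to a balanced embedding), so the recursion as described gains one level per color and proves only ratio $1$. Recovering $\log 3$ forces the amortized version --- merging $\approx 3^m$ copies through a depth-$\lfloor m\log 3\rfloor$ gadget using only $m$ fresh colors --- in which most gadget vertices must reuse old colors, reintroducing the same uncentered-crossing-path problem inside the gadget itself. Without an explicit gadget and a verifiable inductive invariant on how old colors may recur along root-to-leaf paths, the argument does not close.
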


\section{General bounds}
\label{sec:general}

In this section we provide improvements over the bound of \Cref{th:polybd} in minor-closed graph classes, graphs of bounded treewidth, trees, and certain intersection graphs, including chordal graphs and circular-arc graphs.

Following~\cite{kun2021polynomial}, Bose et al.~\cite{bose2022linear} relaxed the notion of a $k\times k$ grid to a family of graphs called $k\times k$ \emph{pseudogrids}, that have the following properties:
\begin{enumerate}[(1)]
\item A graph $G$ contains $\grid_k$ as a minor if and only if $G$ contains a $k\times k$ pseudogrid as a subgraph (that is, $k\times k$ cor:treedeg are exactly the subgraph-minimal graphs containing $\grid_k$ as a minor).
\item There is a constant $c>0$ such that the linear chromatic number of any $k\times k$ pseudogrid is at least $ck$.
\end{enumerate}

These two facts imply the following.

\begin{theorem}[\cite{bose2022linear}]
\label{th:lineargrid}
There is a constant $c>0$ such that if $G$ contains $\grid_k$ as a minor then $\clin(G)\geq c k$.
\end{theorem}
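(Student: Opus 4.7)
The plan is to combine the two listed properties of pseudogrids with the subgraph monotonicity of the linear chromatic number (\Cref{obs:monolin}); the argument is essentially a one-line chain of inequalities, so there is no real obstacle, only bookkeeping.

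Concretely, suppose $G$ contains $\grid_k$ as a minor. By property~(1), there exists a subgraph $P$ of $G$ that is a $k\times k$ pseudogrid. By property~(2), there is an absolute constant $c>0$, not depending on $k$ or on the particular pseudogrid chosen, such that $\clin(P)\ge ck$. Applying \Cref{obs:monolin} to $P\subseteq G$ gives $\clin(G)\ge \clin(P)$, and hence
\[
\clin(G)\;\ge\;\clin(P)\;\ge\;ck,
\]
which is the claimed bound, with the very same constant $c$ supplied by property~(2).

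The only subtlety worth underlining is that the constant $c$ must be uniform over all pseudogrids of a given size (otherwise the chain would collapse), but this uniformity is precisely what property~(2) asserts, so nothing extra is required. Thus the statement is an immediate corollary of the structural fact that minor-containment of $\grid_k$ is witnessed at the subgraph level by a pseudogrid, together with the already-established lower bound on $\clin$ of pseudogrids and the trivial monotonicity of $\clin$ under subgraphs.
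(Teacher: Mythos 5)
Your proposal is correct and is essentially the paper's own argument: the paper states \Cref{th:lineargrid} as an immediate consequence of the two listed pseudogrid properties, and your write-up simply makes explicit the intermediate use of \Cref{obs:monolin} to pass from the pseudogrid subgraph to $G$. Nothing is missing and no different route is taken.
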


Czerwinski, Nadara, and Pilipczuk proved in~\cite[Theorem 1.4]{czerwinski2021improved} that large centered chromatic number implies either large treewidth or the presence of a subcubic tree with large centered chromatic number.
The precise statement of the result involves a numerical constant, denoted by $\alpha$ and defined as \hbox{$\alpha = \log 3/\log \left (\left(1+\sqrt{5}\right)/2\right)$}. (Note that $\alpha \approx 2.283$.)

\begin{sloppypar}
\begin{theorem}[\cite{czerwinski2021improved}]
\label{th:wd}
    For every two integers $w,d>0$ and every graph $G$, if ${\ccen(G) \geq \alpha wd}$, then either $\tw(G)\geq w$ or $G$ contains as a subgraph a subcubic tree $T$ with ${\ccen(T) \geq d}$.
\end{theorem}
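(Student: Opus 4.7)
I would prove this by induction on $d$. The base case $d=1$ is trivial: since $\ccen(G) \geq \alpha w > 0$, the graph $G$ is nonempty, and any single vertex is a subcubic tree with $\ccen = 1$. For the inductive step, assume the claim for all smaller values of $d$ and let $G$ satisfy $\tw(G) < w$ and $\ccen(G) \geq \alpha w d$; I want to construct a subcubic subtree $T$ of $G$ with $\ccen(T) \geq d$.

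The heart of the argument is Fibonacci-style. The plan is to locate a vertex $v \in V(G)$ together with two vertex-disjoint connected subgraphs $G_1, G_2$ of $G - v$, each containing a neighbor of $v$, with $\ccen(G_i) \geq \alpha w(d-i)$ for $i \in \{1, 2\}$. The inductive hypothesis applied to each $G_i$ (both have treewidth at most $\tw(G) < w$, so the treewidth alternative of the theorem is ruled out) yields subcubic subtrees $T_1, T_2 \subseteq G$ with $\ccen(T_i) \geq d-i$. Joining them to $v$ through the respective incident edges produces a subcubic subtree $T \subseteq G$; since $T_1$ and $T_2$ lie in distinct components of $T - v$, a standard centered-coloring argument gives $\ccen(T) \geq 1 + \max\{d-1, d-2\} = d$.

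The central challenge is locating such a vertex $v$. My plan is to fix a tree decomposition of $G$ of width less than $w$, root it arbitrarily, and descend from the root by always moving to the child whose associated subgraph has the largest centered chromatic number. Since removing the fewer than $w$ vertices of a single bag can drop $\ccen$ by at most $w$, this monotone walk cannot absorb all of $\ccen(G) \geq \alpha w d$ along a single root-to-leaf path without first encountering a bag whose two heaviest children both carry $\ccen$ at least $\alpha w(d-1)$ and $\alpha w(d-2)$; any vertex of that bag can then serve as $v$. In the corner case where the treedepth is concentrated along a single spine of the decomposition, the spine structure forces a long-enough path in $G$, which is itself a subcubic tree of the required treedepth by \Cref{lem:tdpk}.

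The main obstacle I anticipate is the quantitative bookkeeping that forces the precise constant $\alpha = \log 3/\log((1+\sqrt{5})/2)$. Up to $w$ units of treedepth may be lost at every separator step, and one must verify that a split of the stated imbalance is always enforceable before the treedepth runs out. The value of $\alpha$ reflects the interplay between Fibonacci-like branching (where extremal subcubic trees of treedepth $d$ have size $\Theta(((1+\sqrt{5})/2)^d)$) and the $\log 3$ conversion from general to subcubic tree treedepth encountered in \Cref{cor:treedeg}. Matching these two extremes at every step of the induction---so that the recursion $f(d) = f(d-1) + f(d-2) + O(w)$ fits inside the linear bound $\alpha w d$---is where the argument earns its precise numerical constant and constitutes the most delicate part of the proof.
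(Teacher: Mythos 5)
First, a point of order: the paper does not prove \Cref{th:wd} at all --- it is imported verbatim from \cite{czerwinski2021improved} --- so there is no in-paper proof to compare yours against. Judged on its own terms, your proposal has the right general flavour (induction, branching at a vertex, a factor-$w$ loss per separator, Fibonacci-type bookkeeping), but it contains a fatal error at its central step. If $v$ is a vertex of a tree $T$ and $T-v$ has components $T_1,T_2$, the correct lower bound is $\ccen(T)\ge 1+\min\{\ccen(T_1),\ccen(T_2)\}$, not $1+\max$: for $u\in V(T_1)$ one only knows that $T-u$ contains $T_2$, so the removal of any single vertex can be certified to leave treedepth at least $\min\{\ccen(T_1),\ccen(T_2)\}$, and this bound is tight (attaching a single leaf to a long path does not increase its treedepth). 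Hence joining subcubic trees of treedepth $d-1$ and $d-2$ through $v$ only certifies $\ccen(T)\ge d-1$, and your induction does not close. The Fibonacci recursion in \cite{czerwinski2021improved} lives elsewhere: it governs the tree sparsification step (\Cref{lem:sparsificatree}, every tree contains a subcubic subtree retaining a $1/\alpha$ fraction of its treedepth), where $\alpha=\log 3/\log((1+\sqrt5)/2)$ arises from comparing ternary branching in general trees against what a degree-$3$ tree can absorb; it is not the engine of the treewidth dichotomy.

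There is a second, independent gap in how you locate $v$. After removing a bag $X$ of the tree decomposition, the two heavy components are adjacent to $X$ as a whole, which may contain up to $w$ vertices; nothing guarantees they both see a common vertex $v\in X$, so the two subtrees produced by induction cannot in general be joined through a single vertex while keeping the union subcubic. This is precisely why the separator step costs a multiplicative factor of $w$ (all of $X$ must be placed above both parts in the elimination order, via $\td(G)\le\td(G-X)+|X|$ applied once per level of the recursion), and why the hypothesis reads $\ccen(G)\ge\alpha wd$ rather than $\alpha d+\Oh(w)$. Your proposed recursion $f(d)=f(d-1)+f(d-2)+\Oh(w)$ is therefore calibrated against the wrong bound. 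A workable outline along the lines actually used in \cite{czerwinski2021improved} is: (i) if $\tw(G)<w$, extract from the tree decomposition a forest $F\subseteq G$ with $\ccen(F)\ge\ccen(G)/w\ge\alpha d$, and (ii) apply \Cref{lem:sparsificatree} to a tree of $F$ to obtain a subcubic subtree of treedepth at least $d$; the delicate Fibonacci analysis is confined to step (ii).
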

\end{sloppypar}

We will also use the following \textsl{linear grid minor theorem} of Demaine and Hajiaghayi.

\begin{theorem}[\cite{demaine2008linearity}]
\label{th:bidim}
    For every proper minor-closed graph class $\mathcal{G}$ there is a constant $c$ such that for every $t\in \N$ and every $G\in \mathcal{G}$, if $\tw(G)\geq c\cdot t$ then $G$ contains $\grid_t$ as a minor.
\end{theorem}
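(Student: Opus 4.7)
The plan is to reduce to the case of $H$-minor-free graphs for some fixed $H$ depending on $\mathcal{G}$, and then to combine the Robertson--Seymour structure theorem with a linear grid minor theorem for graphs almost embedded in a surface.

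First, because $\mathcal{G}$ is a proper minor-closed class, there exists a finite graph $H \notin \mathcal{G}$, and every graph in $\mathcal{G}$ is $H$-minor-free. It therefore suffices to prove, for each fixed $H$, the existence of a constant $c = c(H)$ such that every $H$-minor-free graph $G$ with $\tw(G) \geq c \cdot t$ contains $\grid_t$ as a minor.

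Second, I would invoke the Robertson--Seymour structure theorem: for each fixed $H$, every $H$-minor-free graph $G$ admits a tree decomposition whose torsos are $h$-almost-embeddable in a surface of Euler genus at most $h$, where $h = h(H)$ is a constant. (Concretely, each torso is obtained from a graph embedded in such a surface by adding at most $h$ apex vertices and gluing at most $h$ vortices of depth at most $h$ to face boundaries.) A standard argument shows that $\tw(G)$ is at most the maximum treewidth of a torso, up to an additive constant depending on $h$. Hence, if $\tw(G) \geq c \cdot t$ for sufficiently large $c$, some torso has treewidth at least $c \cdot t - O(h)$.

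Third, I would establish a linear grid minor theorem for $h$-almost-embeddable graphs. The bounded number of apex vertices and vortices changes the treewidth by only a constant depending on $h$, reducing the problem to showing that any graph embedded in a surface $\Sigma$ of Euler genus at most $h$ with treewidth $\Omega(t)$ contains $\grid_t$ as a minor. In the plane, a linear bound follows from branch-decomposition arguments of Robertson, Seymour, and Thomas. For higher genus, I would proceed by induction on $h$: either $G$ contains a short non-contractible cycle $C$, and cutting along $C$ yields a graph on a surface of strictly lower Euler genus whose treewidth is still $\Omega(t)$ (losing only a constant factor), so the inductive hypothesis applies; or every short cycle is contractible and local-treewidth arguments essentially reduce the problem to the planar case.

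The hard part will be guaranteeing that each genus-reducing cut costs only a constant \emph{multiplicative} factor in treewidth, so that after $O(h)$ such reductions the total loss is still absorbed into a single constant depending on $h$. This is precisely what makes the final dependence on $t$ linear rather than polynomial, and it is the technically delicate contribution of~\cite{demaine2008linearity}; the planar and surface-cutting steps must be executed with carefully chosen branch decompositions to avoid any super-linear blowup.
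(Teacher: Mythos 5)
The paper does not prove \Cref{th:bidim} at all: it is imported as a black box from \cite{demaine2008linearity}, so there is no in-paper argument to compare yours against. What you have written is a roadmap of the actual proof in the cited work, and at the level of strategy it is the right one: reduce to $H$-minor-free graphs, apply the Robertson--Seymour structure theorem, observe that the treewidth of a clique-sum is controlled by the maximum treewidth of a torso, strip apices and vortices, and prove a linear grid-minor theorem for graphs on a fixed surface by induction on the Euler genus via the dichotomy between large face-width (which forces a large grid) and a short non-contractible curve (cutting along which reduces genus at bounded cost).

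As a proof, however, the proposal is not complete, and the gaps are exactly where the linearity is earned. First, the claim that apices and vortices ``change the treewidth by only a constant depending on $h$'' is asserted, not proved; deleting $h$ apex vertices is harmless, but controlling the treewidth contribution of depth-$h$ vortices is one of the genuinely technical steps of \cite{demaine2008linearity}. Second, in the surface induction you need a quantitative version of the dichotomy: either the face-width is at least $\Omega(t)$, in which case a grid minor of order $\Omega(t)$ exists by Robertson--Seymour-type arguments, or there is a non-contractible curve meeting $O(t)$ vertices, and cutting along it loses only an \emph{additive} $O(t)$ in treewidth per genus reduction; your sketch leaves open how ``short'' is guaranteed and whether the loss compounds multiplicatively over $O(h)$ cuts, which you yourself flag as the hard part. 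Since you have identified but not closed that gap, the argument as written establishes the statement only modulo the main lemmas of the cited paper. For the purposes of the present paper this is moot --- the theorem is used as a citation --- but the proposal should not be read as a self-contained proof.
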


We are now ready to prove the first result of this section, which is a quadratic relation between the centered and the linear chromatic numbers for graphs excluding any fixed graph as a minor.

\begin{theorem}\label{th:minclo}
    For every proper minor-closed graph class $\mathcal{G}$ there is a constant $c>0$ such that for every $G\in \mathcal{G}$,
    $\ccen(G)\leq c\cdot \clin(G)^2$.
\end{theorem}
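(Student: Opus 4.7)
\medskip

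\noindent\textbf{Proof plan.} The strategy is to combine the three ingredients assembled just before the statement: the subdivision-or-treewidth dichotomy of Czerwi\'nski--Nadara--Pilipczuk (\Cref{th:wd}), the linear grid minor theorem (\Cref{th:bidim}) which is available precisely because $\mathcal{G}$ is proper minor-closed, and the lower bound on $\clin$ provided by a large grid minor (\Cref{th:lineargrid}). Fix $G\in \mathcal{G}$, set $k=\clin(G)$, and let $c_{\mathcal{G}}$ be the constant from \Cref{th:bidim} applied to $\mathcal{G}$ and $c_0$ the absolute constant from \Cref{th:lineargrid}. The goal is to choose parameters $w,d$ proportional to $k$ so that the conclusion of \Cref{th:wd} is ruled out, yielding $\ccen(G)<\alpha wd = \Oh(k^2)$.

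\medskip

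\noindent\textbf{Controlling the treewidth case.} If $\tw(G)\geq w$, then by \Cref{th:bidim} the graph $G$ contains $\grid_{t}$ as a minor with $t \geq w/c_{\mathcal{G}}$; then \Cref{th:lineargrid} gives $\clin(G)\geq c_0 t \geq c_0 w / c_{\mathcal{G}}$, so $k \geq c_0 w / c_{\mathcal{G}}$, i.e.\ $w\leq (c_{\mathcal{G}}/c_0)\,k$. Contrapositively, the choice $w := \lceil (c_{\mathcal{G}}/c_0)\,k \rceil + 1$ forces $\tw(G)<w$.

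\medskip

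\noindent\textbf{Controlling the subcubic-tree case.} If $G$ contains a subcubic tree $T$ as a subgraph with $\ccen(T)\geq d$, then by \Cref{cor:treedeg} applied to $T$ we have $d \leq \ccen(T) \leq (\log 3)\,\clin(T)$, and by \Cref{obs:monolin} $\clin(T)\leq \clin(G)=k$, so $d\leq (\log 3)\,k$. Contrapositively, taking $d:=\lceil (\log 3)\,k\rceil+1$ rules out the existence of such a tree.

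\medskip

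\noindent\textbf{Conclusion.} With these choices of $w$ and $d$, both outcomes of \Cref{th:wd} are excluded, hence $\ccen(G) < \alpha w d$. Since $w$ and $d$ are each linear in $k$ with constants depending only on $\mathcal{G}$, we obtain $\ccen(G) \leq c\,k^2$ for $c = c(\mathcal{G})$, as required. The only point that requires a little care is that the constant in the final bound depends on $\mathcal{G}$ only through $c_{\mathcal{G}}$ (the other constants $\alpha,\log 3,c_0$ being absolute); the bottleneck is therefore not in any clever combinatorics but in whether one is willing to accept the class-dependent constant from \Cref{th:bidim}, which is unavoidable here since the statement itself is class-dependent.
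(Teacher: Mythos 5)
Your proof is correct and uses exactly the same ingredients as the paper's (\Cref{th:wd}, \Cref{th:bidim}, \Cref{th:lineargrid}, and \Cref{cor:treedeg} with \Cref{obs:monolin}), merely run in contrapositive form: the paper sets $w=d=\lfloor\sqrt{\ccen(G)/\alpha}\rfloor$ and extracts a lower bound on $\clin(G)$ in whichever branch occurs, whereas you take $w,d$ linear in $\clin(G)$ and exclude both branches at once. The only slip is the inequality $t\geq w/c_{\mathcal{G}}$ in the treewidth case, which ignores the integer rounding needed to invoke \Cref{th:bidim} (one only gets $t=\lfloor w/c_{\mathcal{G}}\rfloor$), so $w$ must be enlarged by an additive constant depending on $c_{\mathcal{G}}$; this affects only the constants, not the quadratic bound.
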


\begin{proof}
    Let $c_{\ref{th:lineargrid}}$ and $c_{\ref{th:bidim}}$ be the constants from \Cref{th:lineargrid,th:bidim,}, respectively.
    Let $G\in\mathcal{G}$ and let $k=\ccen(G)$.
    We deal with small values separately. 
    Let $c_0 = 4\alpha (c_{\ref{th:bidim}}+1)^2$.
    If $k \leq c_0$, then the statement trivially holds for $c=c_0$.
    So in the following we may thus assume $k>c_0$.

    We apply \Cref{th:wd} with $w = d = \left \lfloor \sqrt{k/\alpha}\right \rfloor$ (indeed, we have $w,d>0$ and $k\ge \alpha wd$, so the theorem applies). 
    We distinguish two cases depending on the outcome of the theorem.
    
    If $\tw(G)\geq w$, then by \Cref{th:bidim} $G$ contains a $t\times t$ grid as a minor, where $t =  \left \lfloor w/c_{\ref{th:bidim}} \right \rfloor$.
    By \Cref{th:lineargrid}, 
    \begin{align*}
     \clin(G) &\geq c_{\ref{th:lineargrid}} \cdot t\\
     &= c_{\ref{th:lineargrid}} \left \lfloor \frac{ \left \lfloor \sqrt{k/\alpha} \right \rfloor}{c_{\ref{th:bidim}}} \right \rfloor\\
     &\geq c_{\ref{th:lineargrid}} \left ( \frac{\sqrt{k/\alpha}-1}{c_{\ref{th:bidim}}}-1 \right )\\
     &= c_{\ref{th:lineargrid}}  \frac{\sqrt{k}-\sqrt{\alpha}(c_{\ref{th:bidim}}+1)}{c_{\ref{th:bidim}}\sqrt{\alpha}}\\
     &\geq \frac{c_{\ref{th:lineargrid}}}{2c_{\ref{th:bidim}}\sqrt{\alpha}}\sqrt{k} & \text{as}\ k\geq c_0.
    \end{align*}
    So in this case $k \leq c_1 \cdot \clin(G)^2$ for $c_1 = \left ( \frac{2c_{\ref{th:bidim}}\sqrt{\alpha}}{c_{\ref{th:lineargrid}}} \right )^2$.
    
    In the remaining outcome of \Cref{th:wd}, $G$ contains a subcubic tree $T$ with $\ccen(T)\geq d$ as a subgraph. We apply \Cref{cor:treedeg} and get $\clin(G)\geq \clin(T) \geq d/\log 3 = 
    \left \lfloor \sqrt{k/\alpha}\right \rfloor / \log 3$. 
    So similarly as above, $k\leq c_2 \clin(G)^2$ for some constant $c_2>0$.
    Overall the claimed statement holds for $c = \max(c_0, c_1, c_2)$.   
\end{proof}

In graph classes of bounded treewidth we can get a linear bound.

\begin{theorem}\label{th:lineartw}
Every graph $G$ satisfies 
\[\ccen(G)\leq  c\cdot (\tw(G)+1) \cdot \left(\clin(G)+\frac{1}{\log 3}\right),\]
where $c=  \alpha \log 3 < 3.7$.
\end{theorem}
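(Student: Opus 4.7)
The plan is to apply \Cref{th:wd} with the treewidth parameter chosen as $w = \tw(G)+1$. With this choice, the outcome ``$\tw(G) \geq w$'' of the dichotomy is immediately impossible, so the theorem is forced to yield a subcubic subtree $T \subseteq G$ with $\ccen(T) \geq d$. Invoking \Cref{cor:treedeg} on $T$ and \Cref{obs:monolin} then bounds $d$ by $(\log 3)\,\clin(G)$, which in turn yields the desired bound on $\ccen(G)$.

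Concretely, I would set $k = \ccen(G)$ and pick $d$ to be the largest integer with $\alpha w d \leq k$, namely $d = \lfloor k/(\alpha w)\rfloor$, assuming first that $d \geq 1$. The hypothesis of \Cref{th:wd} is then satisfied, and the argument above gives $d \leq \ccen(T) \leq (\log 3)\,\clin(T) \leq (\log 3)\,\clin(G)$. Combining this with the floor inequality $k < \alpha w (d+1)$ yields $k < \alpha w\bigl((\log 3)\,\clin(G) + 1\bigr)$, and factoring $\log 3$ out of the parentheses recovers exactly $c\,(\tw(G)+1)\,(\clin(G) + 1/\log 3)$, where $c = \alpha \log 3$.

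It remains to handle the boundary case $d = 0$, i.e., $k < \alpha w = \alpha(\tw(G)+1)$. Here the target bound follows directly, since $\alpha(\tw(G)+1) = (c/\log 3)(\tw(G)+1) \leq c\,(\tw(G)+1)\,(\clin(G) + 1/\log 3)$ as $\clin(G) \geq 0$. I do not anticipate a serious obstacle in this proof: once one notices the trick of applying \Cref{th:wd} with $w$ one unit above $\tw(G)$ so as to discard the treewidth alternative outright, the rest is just constant-tracking, and the additive $1/\log 3$ inside the parentheses is precisely the residue of the rounding loss in the floor step.
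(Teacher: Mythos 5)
Your proof is correct and follows essentially the same route as the paper: apply \Cref{th:wd} with $w=\tw(G)+1$ and $d=\lfloor \ccen(G)/(\alpha w)\rfloor$ to rule out the treewidth outcome, then combine \Cref{obs:monolin} and \Cref{cor:treedeg} with the floor-rounding loss to get the additive $1/\log 3$ term. Your explicit treatment of the boundary case $d=0$ is slightly more careful than the paper's write-up, but the argument is otherwise identical.
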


\begin{proof}
    Let $k = \ccen(G)$.
    We apply \Cref{th:wd} for $w = \tw(G)+1$ and $d =\lfloor\frac{k}{\alpha w}\rfloor$.

    Treewidth does not increase when taking subgraphs, so only the second outcome of the theorem may hold. 
    Hence, $G$ contains a subcubic tree $T$ with $\ccen(T)\geq d$.

     Since the linear chromatic number is monotone under subgraphs (see \cref{obs:monolin}), we have $\clin(G) \geq \clin(T)$.
    As $T$ is subcubic, we may use \Cref{cor:treedeg} 
    and obtain, using $d>\frac{k}{\alpha w}-1$,
    \[\clin(G) \geq \clin(T)
    \geq \ccen(T) / \log 3
    > \frac{k}{\alpha w(\log 3)}-\frac{1}{\log 3}\,.\]
    Hence, $k \leq \alpha (\log 3) (\tw(G)+1) \cdot \clin(G) +\alpha (\tw(G)+1)$, proving the claimed result.
\end{proof}

As a corollary (and using that $\ccen(G)=\clin(G)$ when $\clin(G)\leq 2$, see \cref{obs:small-values}) we get the following bound.

\begin{corollary}\label{cor:lineartw}
Every graph $G$ satisfies \[\ccen(G)\leq  c\cdot (\tw(G)+1) \cdot \clin(G)\,,\]
where $c=  \alpha(\log 3 + 1/3) < 4.38$.
\end{corollary}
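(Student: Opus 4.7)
The plan is to derive Corollary~\ref{cor:lineartw} from Theorem~\ref{th:lineartw} by a short case analysis on the value of $\clin(G)$, using \cref{obs:small-values} only to dispose of small cases where the bound in Theorem~\ref{th:lineartw} is not yet tight enough.

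First, I would handle the small case $\clin(G)\le 2$. Here \cref{obs:small-values} gives $\ccen(G)=\clin(G)$, so
\[\ccen(G)=\clin(G)\le c\cdot(\tw(G)+1)\cdot\clin(G)\]
trivially holds because $\tw(G)+1\ge 1$ and $c=\alpha(\log 3+1/3)>1$.

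Second, I would treat the main case $\clin(G)\ge 3$. Starting from Theorem~\ref{th:lineartw}, I can factor $\clin(G)$ out of the parenthesized term:
\[\clin(G)+\frac{1}{\log 3}=\clin(G)\left(1+\frac{1}{\clin(G)\log 3}\right)\le \clin(G)\left(1+\frac{1}{3\log 3}\right),\]
where the inequality uses $\clin(G)\ge 3$. Multiplying by $\alpha\log 3\cdot(\tw(G)+1)$ and simplifying gives
\[\ccen(G)\le \alpha\log 3\left(1+\frac{1}{3\log 3}\right)(\tw(G)+1)\cdot\clin(G)=\alpha\left(\log 3+\tfrac{1}{3}\right)(\tw(G)+1)\cdot\clin(G),\]
which is exactly the claimed bound with $c=\alpha(\log 3+1/3)$. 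A rough numerical check ($\alpha\approx 2.283$, $\log 3\approx 1.585$) confirms $c<4.38$.

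There is no real obstacle here; the argument is purely a bookkeeping simplification whose only subtlety is ensuring the constant factor absorbs the additive $1/\log 3$ term. That absorption is only valid once $\clin(G)$ is large enough, which is precisely why the small-value identity $\ccen=\clin$ (for $\clin\le 2$) is invoked to cover the remaining cases.
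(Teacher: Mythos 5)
Your proof is correct and takes essentially the same approach the paper intends: the paper derives \Cref{cor:lineartw} from \Cref{th:lineartw} exactly by invoking $\ccen(G)=\clin(G)$ for $\clin(G)\le 2$ (\Cref{obs:small-values}) and, for $\clin(G)\ge 3$, absorbing the additive $1/\log 3$ term into the multiplicative constant to get $c=\alpha(\log 3+1/3)$. Your case split and the algebraic absorption match this argument precisely.
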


In graph classes where the treewidth is linearly bounded from above by the clique number (which is a lower bound for $\clin$), \cref{th:lineartw} implies a quadratic bound. 
In particular, this is the case for intersection graphs of connected subgraphs of graphs of bounded treewidth.
For a class $\mathcal{G}$ of graphs, let $I(\mathcal{G})$ be the class of \emph{region intersection graphs} of graphs in $\mathcal{G}$, defined as follows.
For $H\in \mathcal{G}$ and a family $\{H_j\}_{j\in J}$ of connected subgraphs of $H$, let $G$ be the graph with vertex set $J$ in which two distinct vertices $i$ and $j$ are adjacent if and only if $H_i$ and $H_j$ have a vertex in common.
Region intersection graphs have been studied as a common generalization of many classes of geometric intersection graphs (see~\cite{DBLP:conf/innovations/Lee17}).
For any graph class $\mathcal{G}$ of graphs with bounded treewidth, region intersection graphs of graphs in $\mathcal{G}$ have treewidth linearly bounded from above by the clique number.
This follows from the following property, shown in the proof of~\cite[Lemma 2.4]{MR1642971}.

\begin{lemma}[\cite{MR1642971}]\label{lem:BGT}
Let $k\ge 0$ be an integer and let $\mathcal{G}$ be a class of graphs with treewidth at most $k$.
Then each graph $G\in I(\mathcal{G})$ has a tree decomposition in which each bag is a union of at most $k+1$ cliques.
\end{lemma}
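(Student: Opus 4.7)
}
The plan is to lift a tree decomposition of the host graph $H$ to a tree decomposition of $G$ with the required structure. Fix $H\in\mathcal{G}$ with a family $\{H_j\}_{j\in J}$ of connected subgraphs whose intersection graph is $G$. Let $(T,\{Y_t\}_{t\in V(T)})$ be a tree decomposition of $H$ of width at most $k$. For each $t\in V(T)$ I will define
\[
X_t \;=\; \bigl\{\, j\in J \,\mid\, V(H_j)\cap Y_t \neq \emptyset \,\bigr\}\,,
\]
and claim that $(T,\{X_t\}_{t\in V(T)})$ is the desired tree decomposition of $G$.

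First I would verify the clique structure of the bags. For each $u\in Y_t$, the set $C_{t,u}=\{\,j\in J\mid u\in V(H_j)\,\}$ is a clique of $G$, because every two regions in this set share the vertex $u$ and are therefore adjacent in $G$. Since $X_t=\bigcup_{u\in Y_t}C_{t,u}$ and $|Y_t|\le k+1$, each bag is a union of at most $k+1$ cliques, as required.

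Next I would check the three tree decomposition axioms. Coverage of vertices and of edges of $G$ is immediate: any $j\in J$ contains some vertex of $H$ which lies in some $Y_t$; and if $ij\in E(G)$ then $H_i$ and $H_j$ share a vertex $u$, and any bag $Y_t$ containing $u$ contains both $i$ and $j$ in $X_t$. The only nontrivial condition is that, for each $j\in J$, the set $T_j=\{\,t\in V(T)\mid j\in X_t\,\}$ induces a subtree of $T$.

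The main (and only real) obstacle is this connectivity axiom, and this is precisely where the hypothesis that each $H_j$ is connected is used. Observe that $T_j=\bigcup_{u\in V(H_j)}T_u$, where $T_u=\{\,t\in V(T)\mid u\in Y_t\,\}$ is a subtree of $T$ by the definition of a tree decomposition. For every edge $uv$ of $H_j$ some bag $Y_t$ contains both endpoints, so $T_u\cap T_v\neq\emptyset$. Because $H_j$ is connected, any two vertices $u,u'\in V(H_j)$ are joined by a walk in $H_j$, and along this walk consecutive subtrees $T_u,T_v$ meet; hence the union of the subtrees along the walk is connected in $T$. Thus $T_j$ is a connected subgraph of $T$, completing the verification and the proof.
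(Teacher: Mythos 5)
Your proof is correct: the lifted decomposition $X_t=\{j\mid V(H_j)\cap Y_t\neq\emptyset\}$, the cliques $C_{t,u}$ witnessing the union-of-$(k+1)$-cliques structure, and the connectivity argument via the pairwise-meeting subtrees $T_u$ along walks in the connected region $H_j$ are all sound. The paper does not reprove this lemma but cites it from the proof of Lemma~2.4 in the referenced work, and your argument is exactly that standard construction, so nothing is missing.
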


\begin{theorem}\label{thm:rigs}
Let $k\ge 0$ be an integer and let $\mathcal{G}$ be a class of graphs with treewidth at most $k$.
Then each graph $G\in I(\mathcal{G})$ satisfies $\ccen(G) \leq c (k+1)(\clin(G)^2 + \clin(G)/\log 3)$, where $c=  \alpha \log 3 < 3.7$.
\end{theorem}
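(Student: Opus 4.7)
The plan is to combine \Cref{lem:BGT} with \Cref{th:lineartw}, using the observation that $\omega(G)$ is a lower bound for $\clin(G)$. Let $G\in I(\mathcal{G})$ and abbreviate $\omega=\omega(G)$.

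First, I would extract a treewidth bound from \Cref{lem:BGT}. Since $G$ admits a tree decomposition whose every bag is a union of at most $k+1$ cliques, each bag has at most $(k+1)\omega$ vertices. Hence
\[
\tw(G)+1 \leq (k+1)\,\omega.
\]
Next, any linear coloring is proper, so it uses at least $\omega$ colors on any clique, giving $\omega \leq \clin(G)$. Combining, $\tw(G)+1 \leq (k+1)\clin(G)$.

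Applying \Cref{th:lineartw} to $G$ with its treewidth bounded as above, we obtain
\[
\ccen(G) \leq c\,(\tw(G)+1)\left(\clin(G)+\frac{1}{\log 3}\right) \leq c\,(k+1)\,\clin(G)\left(\clin(G)+\frac{1}{\log 3}\right),
\]
which rearranges exactly to the claimed bound $c(k+1)\left(\clin(G)^2 + \clin(G)/\log 3\right)$, with $c=\alpha\log 3 < 3.7$.

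There is no real obstacle here: the whole theorem is essentially a one-line calculation once one notices that the ``bags = union of few cliques'' structure given by \Cref{lem:BGT} converts the linear treewidth-dependence of \Cref{th:lineartw} into a quadratic dependence on $\clin(G)$ via the trivial inequality $\omega(G)\leq \clin(G)$. The only thing to be careful about is not to double count: one uses the $\omega\le \clin$ inequality only to replace the factor coming from the tree decomposition bound, while the remaining $\clin(G)+1/\log 3$ factor is kept as is from \Cref{th:lineartw}.
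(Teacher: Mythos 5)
Your proof is correct and follows essentially the same route as the paper: \Cref{lem:BGT} gives $\tw(G)+1\le (k+1)\omega(G)$, the inequality $\omega(G)\le\clin(G)$ converts this into a bound in terms of $\clin(G)$, and \Cref{th:lineartw} then yields the claimed quadratic bound. No gaps.
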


\begin{proof}
Let $G\in I(\mathcal{G})$. 
By \Cref{lem:BGT}, $G$ has a tree decomposition in which each bag is a union of at most $k+1$ cliques.
Hence, $\tw(G)\le (k+1)\omega(G)-1$.
By \cref{th:lineartw} and using the fact that $\omega(G)\le \clin(G)$ we infer the following.

\begin{align*}
  \ccen(G) &\leq c (\tw(G) +1)(\clin(G)+ 1/\log 3)\\
  &\leq c (k+1)(\clin(G)^2 + \clin(G)/\log 3)\,.\qedhere
\end{align*}%
\end{proof}

\begin{corollary} \label{cor:rigs} 
For every class $\mathcal{G}$ of graphs with bounded treewidth, each graph $G\in I(\mathcal{G})$ satisfies $\ccen(G) = \Oh(\clin(G)^2)$.
\end{corollary}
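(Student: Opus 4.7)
The plan is to derive this corollary essentially as an immediate consequence of \Cref{thm:rigs}. Since $\mathcal{G}$ has bounded treewidth, there exists a constant $k\ge 0$ such that every graph in $\mathcal{G}$ has treewidth at most $k$. Fixing such a $k$, the hypothesis of \Cref{thm:rigs} is satisfied, so every $G\in I(\mathcal{G})$ satisfies
\[
\ccen(G) \leq c(k+1)\bigl(\clin(G)^2 + \clin(G)/\log 3\bigr),
\]
with $c = \alpha\log 3$.

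The only remaining work is to absorb the linear term into the quadratic one. If $G$ has no edges then $\ccen(G) = \clin(G) \le 1$ and the bound is trivial; otherwise $\clin(G) \ge 2$, so $\clin(G)/\log 3 \le \clin(G)^2/\log 3$, and we obtain
\[
\ccen(G) \leq c(k+1)\bigl(1 + 1/\log 3\bigr)\clin(G)^2.
\]
Since $c$, $k$, and $\log 3$ are all constants depending only on $\mathcal{G}$, this gives $\ccen(G) = \Oh(\clin(G)^2)$, as required.

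There is no real obstacle here: the substantive content has already been proved in \Cref{thm:rigs}, and the corollary is a straightforward consequence of the fact that the coefficient $(k+1)$ from that theorem is bounded by a constant whenever the treewidth of the class $\mathcal{G}$ is bounded.
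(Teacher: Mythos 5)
Your proposal is correct and matches the paper's (implicit) argument: the corollary is stated without proof precisely because it follows immediately from \Cref{thm:rigs} once $k$ is fixed as a bound on the treewidth of $\mathcal{G}$, with the linear term absorbed into the quadratic one. Your handling of the small-$\clin$ case is a harmless extra precaution (indeed $\clin(G)\le\clin(G)^2$ already holds whenever $\clin(G)\ge 1$).
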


Since every chordal graph is an intersection graph of subtrees in a tree (see~\cite{Buneman1974,Walter1978,Gavril1974}) and every circular-arc graph is an intersection graph of paths in a cycle, the cases $k \in \{1,2\}$ of \cref{cor:rigs} imply the following generalizations of the analogous result from \cite{kun2021polynomial} for interval graphs (\Cref{th:intv}).

\begin{corollary}\label{cor:chordal}
Every chordal or circular-arc graph $G$ satisfies $\ccen(G) = \Oh(\clin(G)^2)$.
\end{corollary}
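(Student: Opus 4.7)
The plan is to deduce \Cref{cor:chordal} as a direct specialization of \Cref{cor:rigs}, by identifying, for each of the two graph classes in question, a class of host graphs of small constant treewidth whose region intersection graphs give the class.

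For chordal graphs, I would invoke the classical characterization of Buneman, Gavril and Walter (cited in the paragraph just above the statement) that a graph is chordal if and only if it is the intersection graph of a family of subtrees of some tree. Trees have treewidth $1$, so every chordal graph lies in $I(\mathcal{G}_1)$ where $\mathcal{G}_1$ is the class of all trees. Applying \Cref{cor:rigs} to $\mathcal{G}_1$ (with $k=1$) immediately yields $\ccen(G) = \Oh(\clin(G)^2)$ for every chordal graph $G$.

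For circular-arc graphs, the argument is analogous: by definition these are intersection graphs of arcs on a circle, which can be equivalently realized as intersection graphs of paths (hence connected subgraphs) of a cycle. Since cycles have treewidth at most $2$, every circular-arc graph belongs to $I(\mathcal{G}_2)$ with $\mathcal{G}_2$ the class of all cycles. A second application of \Cref{cor:rigs}, now with $k=2$, gives the desired quadratic bound.

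I do not anticipate any genuine obstacle here: the heavy lifting has already been done in \Cref{thm:rigs} and \Cref{cor:rigs}, and the only ingredient beyond those is the observation that the two classes have a natural representation as region intersection graphs of hosts of treewidth at most $2$. The only minor point to be careful about is that the bounded-treewidth class $\mathcal{G}$ fixed in \Cref{cor:rigs} is a single class, whereas the intersection representations of different chordal (resp.\ circular-arc) graphs use different host graphs; but since \Cref{cor:rigs} is applied to the class of \emph{all} trees (resp.\ all cycles), this causes no issue.
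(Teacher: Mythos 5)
Your proof is correct and follows exactly the paper's route: the paper also deduces the statement from \Cref{cor:rigs} with $k\in\{1,2\}$, using that chordal graphs are intersection graphs of subtrees of a tree and circular-arc graphs are intersection graphs of paths (arcs) of a cycle. No differences worth noting.
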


A crucial step in the proof of \Cref{th:wd} is the following.
\begin{lemma}[\cite{czerwinski2021improved}]\label{lem:sparsificatree}
    Every tree $T$ contains a subcubic tree $T'$ with 
    $\ccen(T') \geq \ccen(T) / \alpha$ as a subgraph.
\end{lemma}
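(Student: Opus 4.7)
The plan is to prove the lemma by induction on $d = \ccen(T)$, using a slightly strengthened hypothesis that controls the degree of a distinguished vertex. I would show that for every rooted tree $(T,r)$, there is a subcubic subtree $T' \subseteq T$ with $r \in V(T')$, $\deg_{T'}(r) \le 1$, and $\ccen(T') \ge \ccen(T)/\alpha$; the lemma follows by taking any rooting of $T$. The base cases $d \le 2$ are immediate since any tree with $\ccen \le 2$ has a suitable subcubic subgraph (a single vertex or an edge).

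For the inductive step, I would root $T$ at an \emph{optimal center} $v^\star$, i.e., a vertex such that every component of $T - v^\star$ has treedepth at most $d-1$ (such a vertex exists by definition of $\ccen$). Denoting the components by $T_1, \ldots, T_k$ with corresponding neighbors $c_1, \ldots, c_k$ of $v^\star$, sorted so that $\ccen(T_1) \ge \cdots \ge \ccen(T_k)$, the argument splits according to how many $T_i$ realize the maximum $\ccen(T_i) = d-1$.

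In the \emph{favorable case}, where at least two subtrees, say $T_1$ and $T_2$, have $\ccen = d-1$, I would apply the induction hypothesis to $(T_1, c_1)$ and $(T_2, c_2)$, yielding subcubic subtrees $T_1', T_2'$ with $\deg_{T_i'}(c_i) \le 1$ and $\ccen(T_i') \ge (d-1)/\alpha$. Attaching $v^\star$ via the edges $v^\star c_1$ and $v^\star c_2$ produces a subcubic subtree $T^\star$, and picking $v^\star$ as the center gives $\ccen(T^\star) = 1 + (d-1)/\alpha > d/\alpha$, using only $\alpha > 1$. The \emph{critical case} is when only $T_1$ has $\ccen = d-1$: then attaching $v^\star$ as a pendant to a subcubic subtree of $T_1$ may not raise $\ccen$ at all, so I would descend further into $T_1$ in search of a vertex enabling the favorable case, absorbing a controlled loss along the way.

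The main obstacle is ensuring that this descent through critical cases does not accumulate more than a factor $\alpha$ loss in treedepth. The specific value $\alpha = \log 3/\log\varphi$ captures exactly the worst-case discrepancy between the Fibonacci-like growth of minimum subcubic trees of treedepth $k$ (which have roughly $\varphi^k$ vertices, owing to the degree-$3$ constraint forcing binary branching below the root) and the ternary-like structures that an arbitrary tree of treedepth $k$ may force. Verifying that the inductive bookkeeping respects the numerical identity $\log 3 = \alpha \log \varphi$, and in particular that each descent step can be charged against the Fibonacci-like savings available in subcubic subtrees, is the heart of the proof and is where the precise definition of $\alpha$ becomes essential.
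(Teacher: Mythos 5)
First, note that the paper does not prove this lemma at all: it is imported verbatim from \cite{czerwinski2021improved}, so there is no in-paper proof to compare against. Your proposal must therefore stand on its own, and as written it does not: the decisive step is missing. The favorable case (two components of $T-v^\star$ attaining $\ccen=d-1$) loses nothing — iterating it would give $\ccen(T')\ge d$ — so the entire content of the lemma is concentrated in your ``critical case,'' and for that case you offer only a description of what would need to happen (``descend further \ldots absorbing a controlled loss,'' ``verifying that the inductive bookkeeping respects the numerical identity \ldots is the heart of the proof''). That is a statement of the difficulty, not an argument; no recursion is set up, no potential function is defined, and no inequality is verified, so the factor $\alpha$ is never actually earned. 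There is also a bookkeeping inconsistency in the part you do carry out: your strengthened hypothesis demands a subtree containing the prescribed root $r$ with $\deg_{T'}(r)\le 1$, but the tree $T^\star$ you build in the favorable case is anchored at $v^\star$ (which has degree $2$ in $T^\star$) and need not contain $r$; reattaching $r$ along the $r$--$v^\star$ path of $T$ can pass through vertices of $T_1'$ or $T_2'$ that already have degree $3$, breaking subcubicity. (The inequality $\ccen(T^\star)\ge 1+\min(\ccen(T_1'),\ccen(T_2'))$ itself is fine, but it should be justified via $\ccen(H)=1+\min_v\ccen(H-v)$ rather than by ``picking $v^\star$ as the center,'' which only yields an upper bound.)

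Beyond the gap, the heuristic you give for where $\alpha=\log 3/\log\varphi$ comes from is incorrect, which undermines any hope that the sketched charging argument would close with this constant. Minimum subcubic trees of treedepth $k$ have $\Theta(2^k)$ vertices, not $\varphi^k$: the path $P_{2^k-1}$ achieves treedepth $k$, binary branching (a root with two subtrees of treedepth $k-1$) does the same, and the matching lower bound follows from the centroid-decomposition fact that every $n$-vertex tree has treedepth $\Oh(\log n)$ — note that your own phrase ``binary branching below the root'' yields $2^k$, not a Fibonacci recurrence. Likewise, ``ternary-like structures'' are not by themselves the obstruction: the complete ternary tree of height $h$ contains the complete binary tree of height $h$ as a subcubic subtree with the same treedepth, so it loses nothing. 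The Fibonacci behaviour in \cite{czerwinski2021improved} arises from a more delicate two-parameter invariant on rooted subcubic subtrees (tracking a residual degree budget of $1$ versus $2$ at the attachment vertex), which is precisely the machinery your single-parameter strengthening omits. As it stands, the proposal is a plausible opening move followed by an acknowledgement that the hard part remains to be done.
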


Using \Cref{lem:sparsificatree} we can improve by a factor of $2$ the ratio in the bound of \Cref{th:lineartw} in the case of trees, with a similar proof.

\begin{sloppypar}
\begin{theorem}\label{thm:trees}
For every tree $T$, $\ccen(T)\leq c\cdot \clin(T)$, where
$c=  \alpha \log 3 < 3.7\,.$
\end{theorem}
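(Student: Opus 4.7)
The plan is to mimic the proof of \Cref{th:lineartw} but, since trees already have bounded treewidth, to bypass the detour through \Cref{th:wd} and instead apply \Cref{lem:sparsificatree} directly to $T$. This avoids the extra factor of $\tw(G)+1$ (which equals $2$ for trees) appearing in \Cref{th:lineartw}, which is precisely the factor-of-$2$ improvement hinted at just before the theorem.

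In more detail, I would set $k = \ccen(T)$ and invoke \Cref{lem:sparsificatree} to extract a subcubic subtree $T' \subseteq T$ satisfying $\ccen(T') \geq k/\alpha$. Since $T'$ is subcubic, \Cref{cor:treedeg} applies and yields
\[
\clin(T') \;\geq\; \ccen(T')/\log 3 \;\geq\; \frac{k}{\alpha \log 3}.
\]
Then \Cref{obs:monolin}, together with the fact that $T'$ is a subgraph of $T$, gives $\clin(T) \geq \clin(T')$, so combining the inequalities produces
\[
\ccen(T) \;=\; k \;\leq\; \alpha (\log 3) \cdot \clin(T),
\]
which is the claimed bound with $c = \alpha \log 3 < 3.7$.

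There is no real obstacle here: the proof is a short chain of invocations of results already established in the paper, and no case analysis for small values is needed because the inequality holds without rounding corrections. The only minor thing to double-check is that \Cref{lem:sparsificatree} produces a subcubic \emph{subgraph} of $T$ (not merely a minor), so that \Cref{obs:monolin} can be applied to relate $\clin(T')$ to $\clin(T)$; this is precisely how the lemma is stated.
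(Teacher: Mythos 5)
Your proposal is correct and is essentially identical to the paper's own proof: both apply \Cref{lem:sparsificatree} to obtain a subcubic subtree $T'$ with $\ccen(T')\geq\ccen(T)/\alpha$, then chain \Cref{cor:treedeg} with \Cref{obs:monolin} to get $\clin(T)\geq\clin(T')\geq\ccen(T)/(\alpha\log 3)$. Your closing remark is also accurate: the lemma indeed guarantees a subcubic tree as a \emph{subgraph}, so no further checks are needed.
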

\end{sloppypar}

\begin{proof}
  By \Cref{lem:sparsificatree}, $T$ contains a subcubic tree $T'$ with  $\ccen(T') \geq \ccen(T) / \alpha$ as a subgraph.
    Using \Cref{cor:treedeg} we obtain
    $\clin(T) \geq \clin(T')
    \geq \ccen(T') / \log 3
    \geq \ccen(T)/(\alpha (\log 3))$.
\end{proof}

\section{When centered is linear or almost linear}\label{sec:lin=cen}

Interesting cases of graphs satisfying \cref{conj:main} are the graphs for which the two chromatic numbers are equal.
A sufficient condition for this is that every linear coloring of the graph is centered. 
Three graph classes with this property were already identified by Kun et al.~\cite{kun2021polynomial}.
In \Cref{subsec:lin=cen}, we identify two more.

In \Cref{sec:exact}, we consider two more graph families for which the linear and centered chromatic numbers coincide and determine their exact values: complete multipartite graphs and complements of rook's graphs.

Finally, in \Cref{sec:plusone}, we show that for caterpillars the two parameters differ by at most one.

We conclude this preamble with definitions and a simple lemma that are used in several of the following subsections.

Given a positive integer $k$, a \emph{complete $k$-partite graph} is any graph $G$ such that $V(G)$ admits a partition into $k$ nonempty parts such that two vertices of $G$ are adjacent if and only if they belong to different parts.
A \emph{complete multipartite graph} is any graph $G$ that is complete 
\hbox{$k$-partite} for some $k$.
Note that if $G$ is complete multipartite, then $G$ is complete $k$-partite for a unique $k$; moreover, the corresponding partition into $k$ \emph{parts} is unique.
Complete multipartite graphs are cographs, hence, every linear coloring is centered (see~\cite{kun2021polynomial}).

\begin{lemma}\label{lem:join}
Let $G$ be a graph, let $(A,B)$ be a bipartition of $V(G)$ such that every vertex of $A$ is adjacent to every vertex of $B$, and let $\varphi$ be a linear coloring of $G$.
Then, in one of $A$ and $B$, the coloring $\varphi$ assigns different colors to all the vertices.
\end{lemma}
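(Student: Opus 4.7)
The plan is to argue by contradiction. Suppose $\varphi$ assigns some color to two distinct vertices of $A$ and also some (possibly different) color to two distinct vertices of $B$. Pick $a_1,a_2\in A$ with $\varphi(a_1)=\varphi(a_2)$ and $b_1,b_2\in B$ with $\varphi(b_1)=\varphi(b_2)$. Since $A$ and $B$ are disjoint, the four vertices $a_1,b_1,a_2,b_2$ are pairwise distinct.

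Next I would exploit the join hypothesis. Because every vertex of $A$ is adjacent to every vertex of $B$, the three pairs $a_1b_1$, $b_1a_2$, $a_2b_2$ are edges of $G$, so the sequence $a_1, b_1, a_2, b_2$ is a path in $G$. The colors along this path are $\varphi(a_1), \varphi(b_1), \varphi(a_1), \varphi(b_1)$, so every color appearing on the path appears at least twice. Hence this path has no center, contradicting the assumption that $\varphi$ is a linear coloring.

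Concluding the proof, at least one of the sides $A$, $B$ cannot contain two vertices with the same color, which is exactly the statement. The only thing to be slightly careful about is confirming that $a_1,a_2,b_1,b_2$ are genuinely four distinct vertices (which follows from $A\cap B = \emptyset$ together with the choice of the pairs), so that $a_1b_1a_2b_2$ is a valid path rather than a shorter walk. There is no real obstacle here: the argument is just a one-line observation that a join of two ``bad'' sides produces a $P_4$ with two colors each used twice.
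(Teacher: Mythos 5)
Your proof is correct and follows exactly the paper's argument: assuming both sides contain a monochromatic pair, the join condition yields the path $a_1b_1a_2b_2$, a $P_4$ in which every color appears twice, hence with no center. Nothing further is needed.
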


\begin{proof}
Suppose for a contradiction that two vertices $a,a'\in A$ are assigned the same color and two vertices $b,b'\in B$ are assigned the same color.
Then these four vertices form a $P_4$-subgraph with no center. 
\end{proof}

\subsection{Graphs for which every linear coloring is centered}\label{subsec:lin=cen}

Recall that every centered coloring of a graph $G$ is also a linear coloring.
Hence, a sufficient condition for the equality between the linear and centered chromatic numbers of $G$ is that every linear coloring of $G$ is centered.

Kun et al.~\cite{kun2021polynomial} observed that this property holds for graphs with independence number at most two, cographs, and graphs with maximum degree at most $2$.
The result for graphs with independence number at most two can be generalized as follows.

\begin{theorem}\label{th:p3p1}
    If $G$ is a $(P_3+P_1)$-free graph, then every linear coloring of $G$ is centered.
     In particular, $\clin(G) = \ccen(G)$.
\end{theorem}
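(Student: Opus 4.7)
The plan is to argue by contradiction: suppose $\varphi$ is a linear coloring of $G$ that is not centered, so there is a connected subgraph $H$ of $G$ in which no color appears only once. Replacing $H$ by the induced subgraph $G[V(H)]$, I may assume $H$ is induced; this preserves both connectedness and the property that each color of $\varphi$ appearing in $V(H)$ does so at least twice. The aim is to exhibit either a monochromatic adjacency (violating properness) or a four-vertex path in $G$ on which every color repeats, each of which would contradict the fact that $\varphi$ is linear.

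The first step shows that any two vertices $u,v\in V(H)$ with $\varphi(u)=\varphi(v)$ satisfy $d_H(u,v)\ge 3$. Indeed, suppose to the contrary $d_H(u,v)=2$ with common neighbor $w$; then $u,w,v$ induce a $P_3$ in $G$. Pick $w'\neq w$ in $V(H)$ with $\varphi(w')=\varphi(w)$, which exists since $\varphi(w)$ is not unique in $V(H)$; properness rules out $w'\in\{u,v\}$, so $w'\notin\{u,w,v\}$. By $(P_3+P_1)$-freeness, $w'$ must be adjacent to one of $u,w,v$. Adjacency to $w$ is improper; adjacency to $u$ gives a $4$-vertex path on the sequence $v,w,u,w'$, and adjacency to $v$ the symmetric path on $u,w,v,w'$. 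In either case the color sequence along the path is $(c_u,c_w,c_u,c_w)$ with $c_u=\varphi(u)=\varphi(v)$ and $c_w=\varphi(w)=\varphi(w')$, so no color is unique on the path, contradicting linearity of $\varphi$.

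The second step completes the argument. Since $H$ cannot be a clique (on a clique a proper coloring makes every color unique), $H$ contains an induced $P_3$, say on $x_0,x_1,x_2$, taken as the first three vertices of a shortest path in $H$ between two non-adjacent vertices. Pick $y\neq x_1$ with $\varphi(y)=\varphi(x_1)$; properness forces $y\notin\{x_0,x_1,x_2\}$. By $(P_3+P_1)$-freeness, $y$ must be adjacent to some vertex of $\{x_0,x_1,x_2\}$; adjacency to $x_1$ is improper, while adjacency to $x_0$ or to $x_2$ places $y$ at distance exactly $2$ from $x_1$ in $H$, contradicting the first step. This is the desired contradiction.

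The main subtlety I foresee is the reduction to an induced subgraph $H$ and the bookkeeping that confirms the auxiliary vertices $w'$ and $y$ really lie outside the relevant $P_3$'s; once this is in place, the path constructions and the distance-$2$ arguments are immediate. The equality $\clin(G)=\ccen(G)$ then follows at once: an optimal linear coloring is now known to be centered, giving $\ccen(G)\le\clin(G)$, while the reverse inequality holds in general.
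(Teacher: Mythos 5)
Your proof is correct, but it takes a genuinely different route from the paper's. The paper argues by cases on whether the complement $\overline{H}$ of the offending connected subgraph is connected: if it is, it invokes Olariu's theorem that paw-free graphs are triangle-free or complete multipartite, reducing to the known result for graphs of independence number at most two; if not, $H$ is a join and the paper's join lemma (\cref{lem:join}) produces a center. Your argument is instead entirely self-contained and elementary: you first show that any two equally colored vertices of $H$ must be at distance at least $3$ in $H$ (otherwise $(P_3+P_1)$-freeness forces a four-vertex path colored with only two colors, contradicting linearity), and then you derive a contradiction from an induced $P_3$ in the non-complete graph $H$, since the second occurrence of the middle vertex's color must attach to the $P_3$ and thereby land at distance $2$. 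All the bookkeeping checks out: the reduction to an induced $H$ preserves connectedness and the absence of a unique color, properness (which every linear coloring enjoys) correctly excludes the degenerate placements of $w'$ and $y$, and the paths $v\,w\,u\,w'$ and their symmetric variants genuinely carry the color pattern $(c_u,c_w,c_u,c_w)$ with no center. What each approach buys: the paper's proof is shorter given the cited structure theorems and places $(P_3+P_1)$-free graphs in context next to paw-free complements and cographs, whereas yours avoids all external results and makes transparent the combinatorial mechanism --- a repeated color adjacent to an induced $P_3$ always yields a bichromatic $P_4$.
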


\begin{proof}
   Suppose for a contradiction that $G$ has a non-centered linear coloring $\varphi$ and let $H$ be a connected induced subgraph of $G$ with no center.
   We consider two cases depending on whether the complement $\overline{H}$ of $H$ is connected or not.
      \paragraph{Case 1.} $\overline{H}$ is connected. 
        Since $\overline{H}$ is paw-free, by a result of Olariu \cite{MR0947254} $\overline{H}$ is either triangle-free or complete multipartite. 
        In the first case the independence number of $H$ is at most $2$, so by \cite[Lemma 1]{kun2021polynomial} $\varphi$ is a centered coloring, a contradiction. 
        In the second case, as $H$ is connected, $H$ is a complete graph and the same conclusion follows.  
    \paragraph{Case 2.} $\overline{H}$ is disconnected. 
        Then $H$ has a bipartition $(A,B)$ of its vertex set into two nonempty sets 
        with all edges between them. 
        As $\varphi$ is a proper coloring, $\varphi$ colors $A$ and $B$ with disjoint sets of colors. 
        In addition, by \Cref{lem:join}, in one of $A$ and $B$ $\varphi$ assigns different colors to all the vertices.
        We may assume without loss of generality that the vertices of $A$ get colored with different colors. 
       Then, any vertex of $A$ is a center of $H$, contradicting the choice of $H$.
\qedhere
\end{proof}

A useful observation, also used in \cite{kun2021polynomial}, is that the property holds for graphs such that every connected induced subgraph has a Hamiltonian path.
\begin{observation}\label{obs: Hamiltonian}
    If $G$ is such that every connected induced subgraph of $G$ has a Hamiltonian path, then every linear coloring of $G$ is centered.
\end{observation}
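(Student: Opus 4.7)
The plan is very direct: unpack the definitions of linear and centered coloring and exploit the Hamiltonicity hypothesis to identify the center of an arbitrary connected induced subgraph. Let $\varphi$ be an arbitrary linear coloring of $G$. To show $\varphi$ is centered, I pick an arbitrary connected induced subgraph $H$ of $G$ and exhibit a vertex of $H$ whose color under $\varphi$ is not shared by any other vertex of $H$.

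First, I would invoke the hypothesis on $H$: since $H$ is a connected induced subgraph of $G$, it admits a Hamiltonian path $P$. Since $P$ is in particular a path of $G$ (induced subgraphs preserve edges, and paths are subgraphs), the linear coloring property of $\varphi$ guarantees that some vertex $v$ of $P$ receives a color not appearing elsewhere on $P$. Now, because $P$ is Hamiltonian in $H$, we have $V(P) = V(H)$, so $v$ is a vertex of $H$ whose color is not shared by any other vertex of $H$. This vertex is the required center of $H$.

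The only subtle point worth flagging is the conversion between ``unique on $P$'' and ``unique on $H$''; this is precisely what Hamiltonicity buys us, and is why the stronger hypothesis on connected induced subgraphs (rather than only on $G$ itself) is essential. There is no real obstacle — the argument is a one-line deduction once the definitions are spelled out — and no additional notation or auxiliary lemma is needed beyond what is already introduced in \Cref{sec:prelim}.
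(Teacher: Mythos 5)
Your argument is correct and is essentially the proof the paper gives: take the Hamiltonian path of the connected induced subgraph, let the linear coloring provide a center on that path, and observe that since the path is spanning, this vertex is a center of the whole subgraph. The one point where your write-up is slightly narrower than the definition is that a centered coloring must provide a center for \emph{every} connected subgraph, not only the induced ones; the paper bridges this by noting that for any connected subgraph $H$ the set $V(H)$ induces a connected subgraph with the same vertex set, and since being a center depends only on the vertex set, your argument applied to $G[V(H)]$ covers $H$ as well.
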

Indeed, if $H$ admits a linear coloring $\psi$, then for every connected subgraph $H$ of $G$, $V(H)$ induces a connected subgraph that has a Hamiltonian path $P_H$. Then $P_H$ admits a center, thus $H$ admits a center, and therefore $\psi$ is a centered coloring of $G$.

A graph $G$ is an \emph{interval graph} if it is an intersection graph of a family of closed intervals on the real line. 
If, moreover, none of the intervals contains another, then $G$ is a \emph{proper interval graph}.
Kun et al.~\cite{kun2021polynomial} showed that every interval graph $G$ satisfies $\ccen(G) \le \clin(G)^2$.
If $G$ is a proper interval graph, then $\clin(G) = \ccen(G)$, as implied by the fact that every proper interval graph is (claw, net)-free\footnote{The \emph{claw} is the graph obtained by attaching 3 pendant vertices to a vertex, and the \emph{net} is the graph obtained by attaching a pendant vertex to each vertex of a $K_3$.} and the following.
\begin{proposition}\label{th:CN-free}
If $G$ is (claw, net)-free, then every linear coloring of $G$ is centered. In particular, $\clin(G) = \ccen(G)$.
\end{proposition}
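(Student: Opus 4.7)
The strategy is to reduce the statement to a classical traceability result via \Cref{obs: Hamiltonian}. That observation asserts that if every connected induced subgraph of $G$ admits a Hamiltonian path, then every linear coloring of $G$ is a centered coloring. Since being (claw, net)-free is a hereditary property, every induced subgraph of a (claw, net)-free graph is itself (claw, net)-free, so it is enough to establish that every connected (claw, net)-free graph is traceable.

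For the traceability step, I would invoke the classical theorem of Duffus, Gould, and Jacobson, which states exactly that every connected (claw, net)-free graph admits a Hamiltonian path. Feeding this into \Cref{obs: Hamiltonian} yields that every linear coloring of a (claw, net)-free graph $G$ is centered. The equality $\clin(G) = \ccen(G)$ then follows at once: the inequality $\clin(G) \leq \ccen(G)$ is \Cref{rem:weaker}, and the reverse inequality holds because any optimal linear coloring is, by what we just proved, a centered coloring using $\clin(G)$ colors.

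The main obstacle is entirely concentrated in the traceability theorem itself. If one wanted to avoid citing it, a self-contained argument would take a longest path $P = v_1 v_2 \cdots v_\ell$ in a connected (claw, net)-free graph $H$, assume for contradiction that $V(H) \neq V(P)$, and use connectivity to find a vertex $u \notin V(P)$ adjacent to some $v_i$. Maximality of $P$ forces $u$ to be non-adjacent to the neighbors of $v_i$ on $P$, and one then checks that any way to extend the local picture using neighbors of $v_i$ outside $P$ creates either an induced claw at $v_i$ or an induced net using $v_{i-1}$, $v_i$, $v_{i+1}$ and nearby vertices, contradicting the hypothesis. Since this case analysis is standard and well-documented, the cleanest plan is to cite Duffus--Gould--Jacobson and leave the proof of the proposition to the one-line combination with \Cref{obs: Hamiltonian}.
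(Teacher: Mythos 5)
Your proposal is correct and matches the paper's proof exactly: both cite the Duffus--Jacobson--Gould theorem that connected (claw, net)-free graphs are traceable, note that the class is hereditary, and conclude via \Cref{obs: Hamiltonian}. No further comment is needed.
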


\begin{proof}
Duffus, Jacobson, and Gould~\cite{MR0634535} showed that every connected (claw, net)-free graph has a Hamiltonian path. Moreover, the class of (claw, net)-free graphs is closed under induced subgraphs. 
The result is thus implied by \cref{obs: Hamiltonian}.
\end{proof}

\subsection{Complete multipartite graphs and complements of rook's graphs}\label{sec:exact}

\begin{theorem}\label{compbip}
Let $G$ be a complete multipartite graph. 
Then $\clin(G) = \ccen(G)$, with both quantities equal to $n-p+1$ where $n$ is the order of $G$ and $p$ is the maximum cardinality of a part of~$G$.
\end{theorem}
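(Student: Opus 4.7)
The approach splits naturally into three steps. The first step is to use the observation, made just before the statement, that complete multipartite graphs are cographs and hence satisfy $\clin(G) = \ccen(G)$ by \cite{kun2021polynomial}. This reduces the problem to computing one of the two quantities and establishing that it equals $n - p + 1$.

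Second, I would prove the upper bound $\ccen(G) \leq n - p + 1$ by exhibiting an explicit centered coloring. Let $V_1$ be a part of maximum cardinality $p$ and color every vertex of $V_1$ with color $0$, assigning pairwise distinct colors $1, 2, \dots, n-p$ to the remaining vertices. To check that this coloring is centered, consider any connected induced subgraph $H$: either $H$ contains a vertex outside $V_1$, which is then a center since its color appears exactly once in all of $G$; or $H$ is entirely contained in the independent set $V_1$, in which case connectedness forces $|V(H)| = 1$ and the unique vertex is trivially a center.

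For the matching lower bound $\clin(G) \geq n - p + 1$, my plan is to iterate \Cref{lem:join} over the parts. The degenerate case of a single part (so $G$ is edgeless, $p = n$, and $\clin(G) = 1 = n - p + 1$) is handled separately. Otherwise, for each part $V_i$, the pair $(V_i, V(G) \setminus V_i)$ is a join bipartition, so \Cref{lem:join} forces the dichotomy: either $\varphi$ is injective on $V_i$, or it is injective on $V(G) \setminus V_i$. If the second alternative holds for some $i$, then $\varphi$ already uses $n - |V_i|$ colors on $V(G) \setminus V_i$, and since $V_i$ is completely joined to $V(G) \setminus V_i$, properness forces at least one additional color on $V_i$, giving $|\mathrm{Im}(\varphi)| \geq n - |V_i| + 1 \geq n - p + 1$. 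If instead the first alternative holds for every $i$, then $\varphi$ is injective on every part and, by properness of $\varphi$, distinct parts use disjoint color sets, so $|\mathrm{Im}(\varphi)| \geq \sum_i |V_i| = n \geq n - p + 1$.

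There is no real obstacle here: the key insight is that \Cref{lem:join} applied to each part versus the rest of $G$ already encodes all the structural information needed, and the only technicality is the boundary case in which $G$ has a single part, which is dispatched by inspection.
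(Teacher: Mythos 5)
Your proposal is correct and follows essentially the same route as the paper: the identical explicit coloring (one color on a largest part, distinct colors elsewhere) for the upper bound, and \Cref{lem:join} applied to a part versus its complement for the lower bound, with only a cosmetic difference in how the case analysis is organized (the paper locates a monochromatic pair inside a single part and applies the lemma once, whereas you quantify over all parts). The invocation of the cograph fact for $\clin=\ccen$ is a harmless shortcut; the paper instead lets the chain $n-p+1\le\clin(G)\le\ccen(G)\le n-p+1$ force the equality.
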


\begin{proof}
To see that $\ccen(G) \le n-p+1$, observe that we can color the largest part $Y$ of $G$ with the same color and all the other vertices with different colors. This is a centered coloring since every subgraph of $G$ either contains a single vertex from $Y$, or contains a uniquely colored vertex from $G-Y$.

To show that $\clin(G) \ge n-p+1$, consider a linear coloring $\varphi$ and let $k$ be the number of colors used.
If all vertices of $G$ are colored with unique colors, then $k = n\ge n-p+1$.
Hence, we may assume that two vertices $u,v$ receive the same color.
Then $u$ and $v$ are non-adjacent and thus belong to the same part $X$ of $G$.
Since the pair $(X, V(G)\setminus X)$ is a bipartition of $V(G)$ with all edges between $X$ and $V(G)\setminus X$, by \Cref{lem:join}, $\varphi$ assigns different colors to all the vertices in one of $X$ and $V(G)\setminus X$.
Since $\varphi(u) = \varphi(v)$, we infer that $\varphi$ assigns different colors to all the vertices in $V(G)\setminus X$.
Furthermore, since $\varphi$ is a proper coloring, no color of a vertex in $V(G)\setminus X$ can be used on any vertex in $X$.
Hence, $k\ge n-|X|+1\ge n-p+1$, as claimed.
We conclude that $\clin(G) \le n-p+1\le \ccen(G)\le \clin(G)$, therefore, equalities must hold throughout.
\end{proof}

For all $n,m\geq 1$, the $n\times m$ \emph{rook's graph} is a graph on $nm$ vertices, arranged into $m$ columns with $n$ vertices each and into $n$ rows with $m$ vertices each, such that two vertices are adjacent if and only if they are in the same column or in the same row.

We consider here the \emph{co-rook's graph}, that is, the complement of the $n\times m$ rook's graph.
Note that this graph can also be obtained by removing the edges of $n$ disjoint complete graphs $K_m$ from the complete multipartite graph with $m$ parts with $n$ vertices each.

\begin{theorem}\label{corook}
    Let $n\geq m \geq 1$ and let $G_{n,m}$ be the complement of the $n\times m$ rook's graph.
    Then 
    \[\ccen(G_{n,m})=\clin(G_{n,m}) = 
    \begin{cases}
        nm - n +1 &\text{if \,$m\ge 2$ and $n\ge 3$\,,}\\
        m &\text{otherwise\,.}
    \end{cases}\]
\end{theorem}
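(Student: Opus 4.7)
My plan is to prove the two equalities by dispatching the boundary cases and then handling the main case $n\ge 3,\,m\ge 2$. For the boundary cases: when $m=1$, the $n\times 1$ rook's graph is $K_n$, so $G_{n,1}$ is the edgeless graph on $n$ vertices and $\ccen=\clin=1=m$; when $n=m=2$, $G_{2,2}=2K_2$ and both parameters equal $2=m$. For the upper bound $\ccen(G_{n,m})\le nm-n+1$ in the main case, I would exhibit an explicit centered coloring: fix any column $j_0$, assign all $n$ vertices of column $j_0$ a single common color, and assign pairwise distinct new colors to the remaining $n(m-1)$ vertices. Since every column of $G_{n,m}$ is an independent set, any connected subgraph on two or more vertices must contain a vertex outside $j_0$, which has a globally unique color and is therefore a center; a single-vertex subgraph is trivially its own center. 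Combined with the general inequality $\clin\le\ccen$, this yields $\clin(G_{n,m}),\ccen(G_{n,m})\le nm-n+1$.

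For the lower bound $\clin(G_{n,m})\ge nm-n+1$, I would take any linear coloring $\varphi$ and show that its total excess $E := \sum_{C : |C| \ge 2}(|C|-1)$ satisfies $E\le n-1$. The first step is the structural observation that each non-singleton color class of $\varphi$ lies entirely in a single row or in a single column: two same-colored vertices are non-adjacent (properness) and hence share a row or a column, and a transitivity argument inside one class (three same-colored $u_1,u_2,u_3$ with $u_1,u_2$ in a common row and $u_1,u_3$ in a common column would force $u_2,u_3$ to be adjacent) forbids mixing the two relations. I would then case-split on the global arrangement: if all non-singleton classes lie in one column, then $E\le n-1$ trivially; the symmetric one-row case gives $E\le m-1\le n-1$; the residual case is when the non-singleton classes span at least two rows or at least two columns.

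To handle this residual case I would rule out $E>n-1$ via two path obstructions. The first obstruction is a $P_4$ of the form $u_1 v_1 u_2 v_2$ in $G_{n,m}$ with alternating color pattern $c_1 c_2 c_1 c_2$, where $\{u_1,u_2\}$ and $\{v_1,v_2\}$ come from two different non-singleton classes; such a $P_4$ exists whenever two column-type classes sit in distinct columns, except in the restricted case where both have size $2$ with the same pair of rows. The symmetric statement holds for row-type classes, and an analogous $P_4$ controls any column-type/row-type pair: it can be avoided only when the ``corner'' vertex at the intersection of the column-type class's column and the row-type class's row belongs to one of the two classes, and the containing class has size $2$. The second obstruction is a Hamiltonian path in an induced subgraph of $G_{n,m}$ isomorphic to $K_{k,k}$ minus a perfect matching, which arises from restricting to two rows (or two columns) and $k$ columns (or rows) carrying size-$2$ classes with the same row set (or column set); this subgraph is Hamiltonian for $k\ge 3$ and the resulting path uses every involved color exactly twice. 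The main obstacle is the exhaustive case analysis combining these two obstructions: one must enumerate all configurations surviving the $P_4$ obstruction, rule out the dense ones via the Hamiltonian-path obstruction, and verify $E\le n-1$ in each of the remaining patterns.
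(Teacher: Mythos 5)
Your boundary cases and upper bound are fine (your explicit coloring of one column with a single repeated color is essentially the paper's argument, which instead invokes the complete multipartite bound and subgraph monotonicity), and your two obstructions are correctly characterized: the alternating $P_4$ between two repeated classes corresponds to the paper's \cref{cl:twoCouples}, and the Hamiltonian path in $K_{k,k}$ minus a perfect matching corresponds to the first half of the paper's \cref{cl:3couples}. The reduction to bounding the excess $E$ by $n-1$ is also the right quantity.

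However, there is a genuine gap in the lower bound: your two obstructions do not suffice to rule out every configuration with $E>n-1$, so the deferred ``exhaustive case analysis'' cannot be completed with the stated toolkit. Concretely, consider three disjoint size-$2$ classes: two column-type classes $\{(R_1,C_1),(R_2,C_1)\}$ and $\{(R_1,C_2),(R_2,C_2)\}$ occupying the same row pair $\{R_1,R_2\}$, and one row-type class $\{(R_3,C_1),(R_3,C_2)\}$ in a third row, occupying exactly the columns $C_1,C_2$. This configuration survives all your obstructions: the two column classes are exempt from the $P_4$ obstruction (size $2$, same row pair); for each column-class/row-class pair the corner $(R_3,C_j)$ lies in the row class, which has size $2$, so your stated exemption applies; and the crown-graph obstruction needs at least three columns. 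Yet it has $E=3$ and fits inside a $3\times 2$ subgrid, so for $n=3$ (e.g., $G_{3,2}$ or $G_{3,3}$) it violates $E\le n-1=2$. Killing it requires a third obstruction, namely a $6$-vertex path alternating through all three classes, such as $(R_1,C_1)(R_2,C_2)(R_3,C_1)(R_1,C_2)(R_2,C_1)(R_3,C_2)$, whose color pattern is $a\,b\,c\,b\,a\,c$ and which therefore has no center; this is exactly the path the paper produces in the mixed case of \cref{cl:3couples}. You would need to add this (and check the analogous transposed configuration) before the enumeration of surviving patterns can close. Note also that the paper's endgame is a counting argument (at most three repeated colors in total, versus at least $n$ needed), which is a cleaner way to finish than verifying $E\le n-1$ pattern by pattern; you may want to adopt it once the three-class configurations are classified.
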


\begin{proof}
Suppose first that $m=1$. 
Then $G_{n,1}$ is an edgeless graph of order $n$, thus $\ccen(G_{n,1})=\clin(G_{n,1})=1$.
Suppose next that $m=n=2$. 
Then $G_{2,2}$ is a disjoint union of two complete graphs $K_2$, hence $\ccen(G_{2,2})=\clin(G_{2,2})=2$.

Assume now that $m\geq 2$ and $n\geq 3$, with $n\geq m$.
Since $G_{n,m}$ is a subgraph of the complete multipartite graph with $m$ parts of $n$ vertices, we obtain from \Cref{prop:minorclosed,compbip} that 
$\ccen(G_{n,m})\le nm-n+1$; so let us show that $\clin(G_{n,m})\geq nm - n +1$.
Suppose for a contradiction that there exists a linear coloring $\varphi$ of $G_{n,m}$ with at most $nm-n$ colors.

Recall that the vertices of $G_{n,m}$ are partitioned into $m$ columns $C_1,\dots, C_m$ and into $n$ rows $R_1,\dots, R_n$. 
Two vertices are non-adjacent if and only if they belong to the same column or row.
Hence, for any two vertices $u$ and $v$, if $\varphi(u)=\varphi(v)$, then they are non-adjacent, so either there is $i\in \{1,\dots,n\}$ such that $u,v\in R_{i}$ or there is $j\in \{1,\dots,m\}$ such that $u,v\in C_{j}$.

\begin{claim}\label[claim]{cl:twoCouples}
    Let $u,v,u',v'$ be four distinct vertices 
    such that $\varphi(u)=\varphi(v)$ and $\varphi(u')=\varphi(v')$. 
Then either three of those vertices belong to the same column or to the same row, or there exist two rows $R$ and $R'$ and two columns $C$ and $C'$ such that $\{u,v,u',v'\}\subseteq R\cup R'$ as well as $\{u,v,u',v'\}\subseteq C\cup C'$.
\end{claim}

\begin{proof}
    Recall that the vertices in each pair are either in the same column or in the same row.
    
    Suppose that $u$ and $v$ are in the same column $C_j$, and respectively in rows $R_{i_u}$ and $R_{i_v}$.
    Suppose that neither $u'$ nor $v'$ is in $C_j$ (otherwise there are three of those vertices in $C_j$), and that one of them, without loss of generality $u'$, is neither in $R_{i_u}$ nor in $R_{i_v}$. 
    Thus, $u'$ is adjacent to both $u$ and $v$. 
    Moreover, since $v'$ cannot be both in $R_{i_u}$ and $R_{i_v}$, $v'$ is adjacent to at least one of $u,v$, without loss of generality $v$. 
    Then $uu'vv'$ is a path of $G_{n,m}$ with no center, a contradiction.

    Therefore, $u',v'$ belong to the union of the rows $R_{i_u}$ and $R_{i_v}$.
    If they are in the same row, then three of the vertices $u,v,u',v'$ are in a same row, so we may assume (w.l.o.g.) that $u'$ belongs to $R_{i_u}$ and $v'$ belongs to $R_{i_v}$.
    Suppose that $u',v'$ are not both in the same column: then $uv'u'v$ is a path of $G_{n,m}$ with no center, a contradiction.

    The case where $u,v$ are in a same row is obtained by symmetry.
    \cqed
\end{proof}

\begin{claim}\label[claim]{cl:oneCouplebyPart}
    For each column (resp.~row), there are at most two vertices whose color is not unique in this column (resp.~row).
\end{claim}
\begin{proof}
    Suppose that there exist two distinct vertices $u, v\in V(G_{n,m})$ that are in the same column $C_j$ such that $\varphi(u)=\varphi(v)$.
    Let $R_{i_u}$ and $R_{i_v}$ be the two rows containing $u$ and $v$, respectively.

    Observe that the graph $G'= G_{n,m} - C_j$ contains exactly $nm-n$ vertices, and none of these vertices can be colored $\varphi(u)$, so if each vertex of $G'$ has a unique color in $G'$, then $\varphi$ uses at least $|V(G')|+1>nm-n$ colors, a contradiction.
    Thus there exist two distinct vertices $x$ and $y$ in $G_{n,m}-C_j$ such that $\varphi(x)=\varphi(y)$.
    By \cref{cl:twoCouples}, they are in rows $R_{i_u},R_{i_v}$, either both in one of them (then without loss of generality $R_{i_u}$) or one in each row, and both in the same column.

    Suppose now that there is another pair $u'$ and $v'$ of distinct vertices in $C_j$ such that $\varphi(u')=\varphi(v')$, $u'\notin \{u,v\}$, and $v'\neq u$ (possibly $v'=v$ and $\varphi(u)=\varphi(v)=\varphi(u'))$.
    Then if $x,y$ are both in $R_{i_u}$, they are not in the same row as $u'$ nor $v'$, and if they are in the same column, none of them is in the same row as $u'$. 
    In both cases, this contradicts \cref{cl:twoCouples}.

    Finally, the case when $u$ and $v$ are in the same row $R_i$ is obtained by symmetry and observing that $|V(G_{n,m} - R_i)|=nm-m \geq nm- n $.
    \cqed
\end{proof}

\begin{claim}\label[claim]{cl:atMostTwoColors}
Each color appears at most twice in $G_{n,m}$.
\end{claim}

\begin{proof}
Suppose that $G_{n,m}$ contains three distinct vertices $u$, $v$, and $w$ such that $\varphi(u) = \varphi(v) = \varphi(w)$.
Since these three vertices are pairwise non-adjacent, they either all belong to the same row or to the same column.
This contradicts \Cref{cl:oneCouplebyPart}.
\end{proof}

\begin{claim}\label[claim]{cl:3couples}
Let $\{u_k,v_k\}$, $k\in\{a,b,c\}$, be three disjoint pairs of vertices such that $\varphi(u_k)=\varphi(v_k)$ for all $k\in\{a,b,c\}$.
Then, four vertices among those six belong to the same column or to the same row, and all the other $nm-6$ vertices have a unique color.
\end{claim}

\begin{proof}
Every pair of vertices of the same color has to be either in the same column or in the same row.
We call such a pair a \emph{column} pair (resp.~a \emph{row} pair).
Hence, we may assume without loss of generality that $\{u_a,v_a\}$ and $\{u_b,v_b\}$ are both column pairs or both row pairs.
Suppose that they are both column pairs (the case when they are both row pairs is treated symmetrically).
Note that since $\varphi(u_a)=\varphi(v_a)$ and $\varphi(u_b)=\varphi(v_b)$, we infer from \Cref{cl:oneCouplebyPart} that the columns containing $\{u_a,v_a\}$ and $\{u_b,v_b\}$, respectively, are distinct.
Up to reordering the columns and rows, we can assume that $\{u_a,v_a\}$ and $\{u_b,v_b\}$ are respectively in $C_1$ and $C_2$.
By \cref{cl:twoCouples}, we can also assume that $u_a,u_b$ belong to $R_1$ and  $v_a,v_b$ belong to $R_2$.

Suppose that $\{u_c,v_c\}$ is also a column pair.
Let $C_j$ be the column containing $\{u_c,v_c\}$.
By \cref{cl:oneCouplebyPart}, $j\notin \{1,2\}$, and by \cref{cl:twoCouples} applied to $\{u_a,v_a\}$ and $\{u_c,v_c\}$, we can assume that $u_c\in R_1$ and $v_c\in R_2$. 
Therefore, $u_av_bu_cv_au_bv_c$ is a path in $G_{n,m}$ with no center, a contradiction with the fact that $\varphi$ is a linear coloring of $G_{n,m}$.

Thus, $\{u_c,v_c\}$ has to be a row pair, and let $R_i$ be the row containing $\{u_c,v_c\}$.
Suppose first that $i\notin \{1,2\}$.
Then by \cref{cl:twoCouples} applied to $\{u_a,v_a\}$ and $\{u_c,v_c\}$, we can assume that $u_c\in C_1$ and $v_c\in C_2$.
Then, $u_av_bu_cu_bv_av_c$ is a path in $G_{n,m}$ with no center, a contradiction.
Therefore, $\{u_c,v_c\}$ are either in $R_1$ or $R_2$, say $R_1$, hence there are four vertices (namely, $u_a$, $u_b$, $u_c$, and~$v_c$) among those six that are on the  same row.

It remains to show that all the other $nm-6$ vertices have a unique color.
    Suppose for a contradiction that there exists a fourth pair $\{u_d,v_d\}$ such that $\varphi(u_d)=\varphi(v_d)$.
    Applying the previous observations for $\{u_c,v_c\}$ to $\{u_d,v_d\}$, we infer that $\{u_d,v_d\}$ are either in $R_1$ or $R_2$. 
    Since $\{u_c,v_c\}$ are in $R_1$, we infer from~\cref{cl:oneCouplebyPart} that $\{u_d,v_d\}$ are in $R_2$.
    By \cref{cl:twoCouples}, we may assume without loss of generality that $u_c,u_d$ and $v_c,v_d$ are respectively in the same column.
    Then $u_au_dv_cv_au_cv_d$ is a path in $G_{n,m}$ without a center, a contradiction.
    \cqed
\end{proof}

Recall that $\varphi$ uses at most $nm-n$ colors.
By \cref{cl:atMostTwoColors}, each of the colors appears at most twice.
Let $k$ be the number of colors that appear twice.
Then, the total number of colors used by $\varphi$ is $nm-k$, and, hence, $k\ge n$.
On the other hand, $k\le 3$, since $k\ge 4$ would contradict \cref{cl:3couples}. 
Therefore, $n\le k\le 3$, and since $n\ge 3$, we infer that $m\le n = k = 3$. 
Since $k = 3$, there exist three disjoint pairs of vertices, each of the same color.
Hence, by \cref{cl:3couples}, four vertices among those six belong to the same column or to the same row.
However, this implies that $m\ge 4$ or $n\ge 4$, respectively, a contradiction.
\end{proof}

\begin{remark}\label{rem:corook}
    Not every linear coloring of a co-rook graph is centered. Consider the graph depicted in \Cref{fig:rem-coroook}, with the coloring given by the numbers.
    
    \begin{figure}[H]
        \centering
    \begin{tikzpicture}[every node/.style = {draw, circle, inner sep=3pt, minimum size=0, fill=white}]
        \draw (-1,0) node (v1) {1} -- (0,1) node (v2) {2} -- (1,0) node (v3) {3} -- (0, -1) node (v4) {2} -- cycle;
        \draw (v1) --++(-1,0) node {3};
        \draw (v3) --++(1,0) node {1};
    \end{tikzpicture}
        \caption{The coloring of \Cref{rem:corook}}
        \label{fig:rem-coroook}
    \end{figure}
    It is easy to check that this is a linear coloring; however, it is not a centered coloring, as there is no unique color. This graph is an induced subgraph of the complement $G_{2,4}$ of the $2\times 4$-rook's graph. 
    So any extension of the above coloring into a linear coloring of $G_{2,4}$ (for instance, obtained by giving unique colors to the remaining vertices) is not a centered coloring.
\end{remark}

\subsection{Linearly coloring caterpillars}
\label{sec:plusone}

While \Cref{conj:main} is still open for trees, \Cref{thm:trees} establishes a relaxed version of it.
We now provide further support for \Cref{conj:main} for trees, by proving it for caterpillars.
A \emph{caterpillar} is a tree such that the removal its of leaves yields a path, called its \emph{central path}.

\begin{theorem}
\label{th:cat}
    If $T$ is a caterpillar, then $\ccen(T)\leq \clin(T)+1$.
    Furthermore, there exist caterpillars attaining equality.
\end{theorem}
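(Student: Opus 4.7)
The plan is to prove $\ccen(T)\leq\clin(T)+1$ by lifting an optimal centered coloring of the central path, and then to exhibit a small comb attaining equality. The upper bound is a clean one-shot construction; the main effort lies in the tightness example, specifically the verification of the proposed linear $3$-coloring of the comb by inspecting its paths of length at least~$3$.

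If $|V(T)|\leq 2$, then $T\in\{K_1,K_2\}$ and $\ccen(T)=\clin(T)$. Otherwise the central path $\sigma=s_1\cdots s_n$ is nonempty; I would take an optimal centered coloring $\psi$ of $\sigma$, using $\ccen(\sigma)=\clin(\sigma)=\lceil\log(n+1)\rceil$ colors by \Cref{lem:tdpk}, and extend it to a coloring $\varphi$ of $T$ by giving one fresh color $\star$ to every leaf. I would then check that $\varphi$ is centered. The two key observations are (i) every connected subgraph $H$ of $T$ with $|V(H)|\geq 2$ contains a spine vertex, since leaves of $T$ are pairwise non-adjacent whenever $|V(T)|\geq 3$; and (ii) $V(H)\cap V(\sigma)$ induces a subpath of $\sigma$, because the unique $T$-path between two spine vertices uses only intermediate spine vertices. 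The center of that subpath in $\psi$ has a color $c\neq\star$ that is unique in $H$, so it is a center of $H$ in $\varphi$. Hence
\[
\ccen(T)\leq\ccen(\sigma)+1=\clin(\sigma)+1\leq\clin(T)+1,
\]
where the last step uses \Cref{obs:monolin} applied to $\sigma\subseteq T$.

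For the tightness, I would use the caterpillar $T_4$ with central path $s_1s_2s_3s_4$ and a pendant leaf $\ell_i$ attached to each $s_i$. Since $T_4$ contains $\ell_1s_1s_2s_3s_4\ell_4\cong P_6$ as a subgraph, \Cref{lem:tdpk} and \Cref{obs:monolin} give $\clin(T_4)\geq 3$; the assignment $(s_1,s_2,s_3,s_4)=(3,1,2,3)$ and $(\ell_1,\ell_2,\ell_3,\ell_4)=(1,2,3,1)$ can be verified to be a linear coloring of $T_4$, so $\clin(T_4)=3$. The upper bound just proved yields $\ccen(T_4)\leq 4$, and for the matching lower bound I would use the standard identity $\ccen(G)=1+\min_{v\in V(G)}\ccen(G-v)$ for connected $G$ and check, by a short case analysis on the eight choices of $v\in V(T_4)$, that $T_4-v$ always contains $P_4$ as a subgraph, so $\ccen(T_4-v)\geq\ccen(P_4)=3$ by \Cref{prop:minorclosed}.
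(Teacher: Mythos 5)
Your proof is correct and follows essentially the same route as the paper: a centered coloring of the central path plus one fresh color on all leaves gives the upper bound, and the tightness witness is the same $8$-vertex caterpillar with (up to reflection) the same linear $3$-coloring. The only difference is cosmetic: where the paper cites \cite{dvorak2012forbidden} for $\ccen=4$ of that example, you verify it directly via the treedepth recursion $\ccen(G)=1+\min_{v}\ccen(G-v)$, which makes the tightness claim self-contained.
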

\begin{proof}
    Let $P$ be the central path of $T$, and $k = \lceil \log(|V(P)|+1) \rceil$. 
    We know from \cref{lem:tdpk} that $\ccen(P) = \clin(P) = k$. 
    We define a coloring for $T$ as follows: the path $P$ is colored according to a centered coloring achieving $\ccen(P)=k$ and all the leaves of $T$ receive a $(k+1)$st color. 
    This is a centered coloring of $T$ as each connected subgraph $T'$ of $T$ that has more than one vertex is composed of a nonempty subpath of $P$, which has a center, and a (potentially empty) subset of leaves of $T$ that do not share any color with the subpath. 
    Thus, the center of the subpath is also a center of $T'$.
    Therefore, $\ccen(T)\leq k+1$. Moreover, $T$ contains $P$ with $\clin(P)=k$ as a subgraph so by \cref{obs:monolin}, $\clin(T)\geq k$. Hence, $\ccen(T)\leq \clin(T)+1$.
\end{proof}

\begin{remark}\label{rem:op}
The bound of \Cref{th:cat} is tight, as witnessed by the graph depicted in \Cref{fig:rem-op} which has centered chromatic number 4 (see~\cite{dvorak2012forbidden}) and linear chromatic number 3. 
A linear coloring with 3 colors is given by the numbers on the vertices.
\end{remark}
\begin{figure}[H]
    \centering
    \begin{tikzpicture}[every node/.style = {draw, circle, inner sep=3pt, minimum size=0, fill=white}]
        \draw (0,0) node (v0) {1}
        \foreach \i/\j in {1/3,2/2,3/1,4/3,5/1}{
        -- (\i,0) node (v\i) {\j}
        };
        \draw (v2) -- ++(0,-1) node {3};
        \draw (v3) -- ++(0,-1) node {2};
    \end{tikzpicture}
    \caption{The coloring of \Cref{rem:op}.}
    \label{fig:rem-op}
\end{figure}

\section{Obstructions to bounded linear chromatic number}
\label{sec:obs}

A partial order is a \emph{well-quasi-order} if it contains neither an infinite decreasing sequence, nor an infinite antichain.

\begin{theorem}[\cite{nevsetvril2012sparsity}]
\label{th:bdtdwqo}
    For every $k$ the class of graphs of centered chromatic number at most $k$ is well-quasi-ordered by the induced subgraph relation.
\end{theorem}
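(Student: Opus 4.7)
The plan is to apply Kruskal's tree theorem to a labeled encoding of graphs of bounded centered chromatic number. Recall the standard equivalent definition of treedepth: $\ccen(G)\le k$ iff there exists a rooted forest $F$ of height at most $k$ with $V(F)=V(G)$ such that every edge of $G$ joins a vertex to one of its strict ancestors in $F$ (an \emph{elimination forest} of $G$). For each $G$ in $\mathcal{G}_k := \{H : \ccen(H) \le k\}$, I would fix such an $F_G$ and label each vertex $v$ by the set $L_G(v) \subseteq \intv{1}{k-1}$ of depths $i < \operatorname{depth}_{F_G}(v)$ such that the ancestor of $v$ in $F_G$ at depth $i$ is a neighbor of $v$ in $G$. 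The pair $(F_G, L_G)$ uses labels from the finite set $\Sigma := 2^{\intv{1}{k-1}}$ and determines $G$ uniquely.

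On the class $\mathcal{F}_k$ of rooted forests of height at most $k$ with labels in $\Sigma$, define the quasi-order $(F,L)\preceq(F',L')$ to hold iff there is an injection $\iota\colon V(F)\to V(F')$ that preserves the ancestor relation, preserves depth, and satisfies $L(v) = L'(\iota(v))$ for every $v \in V(F)$. I would prove that $\preceq$ is a well-quasi-order by induction on the height: at height $0$ the statement reduces to equality on the finite set $\Sigma$; in the inductive step, each labeled rooted tree of height $h$ decomposes as a labeled root together with a finite sequence of labeled subtrees of height at most $h-1$, and Higman's lemma applied to the wqo given by the induction hypothesis extends it to height $h$. A further application of Higman's lemma to finite multisets of trees handles the forest case.

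It remains to check that $\preceq$ transports to the induced subgraph relation on $\mathcal{G}_k$: if $(F_{G_1}, L_{G_1}) \preceq (F_{G_2}, L_{G_2})$ via $\iota$, then for $u, v \in V(G_1)$ we have $uv \in E(G_1)$ iff $u$ and $v$ are in ancestor/descendant relation in $F_{G_1}$ and the depth of the ancestor belongs to the label of the descendant; these conditions transfer verbatim under $\iota$, so they hold iff $\iota(u)\iota(v) \in E(G_2)$. Hence any infinite sequence in $\mathcal{G}_k$ lifts to an infinite sequence in $\mathcal{F}_k$ from which $\preceq$ extracts an increasing pair, and the corresponding forest embedding is an induced subgraph embedding of the graphs.

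The main obstacle is aligning two choices simultaneously: the labeling must be rich enough to recover $G$ exactly, so that non-edges are also preserved, which forces the label condition in $\preceq$ to be equality rather than mere comparability; and yet $\iota$ must still respect enough structure (ancestors, depths, labels) for the Kruskal-style argument to go through. The bounded-depth hypothesis is what makes this possible, since it makes the label alphabet $\Sigma$ finite and therefore trivially well-quasi-ordered by equality, which is exactly what allows the induced subgraph — as opposed to subgraph — version of the result.
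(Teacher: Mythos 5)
The paper does not actually prove this statement---it is quoted from Ne\v{s}et\v{r}il and Ossona de Mendez---and your argument is essentially the standard proof from that source: encode each graph of treedepth at most $k$ as an elimination forest of height at most $k$ whose vertices are labelled by the set of ancestor-depths to which they are adjacent, then apply Higman's lemma level by level. The proof is correct; the only step you leave implicit is that your embedding must also \emph{reflect} the ancestor relation (otherwise non-edges need not be preserved), but this does follow from injectivity together with depth preservation, since a vertex has a unique ancestor at each given depth.
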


As graphs of bounded linear chromatic number have bounded centered chromatic number (\Cref{th:polybd}), \Cref{th:bdtdwqo} implies the following.
\begin{corollary}
\label{th:bdcclinwqo}
    For every $k$ the class of graphs of linear chromatic number at most $k$ is well-quasi-ordered by the induced subgraph relation.    
\end{corollary}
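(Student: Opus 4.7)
My plan is to derive the corollary directly by combining \Cref{th:polybd} with \Cref{th:bdtdwqo}, relying on the elementary fact that the property of being well-quasi-ordered is inherited by subclasses under the same quasi-order.

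First, I would invoke \Cref{th:polybd}: every graph $G$ satisfies $\ccen(G)\leq \clin(G)^{10}\cdot (\log \clin(G))^{\Oh(1)}$. In particular, for every fixed $k$ there is an integer $f(k)$, depending only on $k$, such that every graph $G$ with $\clin(G)\leq k$ has $\ccen(G)\leq f(k)$. Denoting by $\mathcal{L}_k$ the class of graphs with linear chromatic number at most $k$ and by $\mathcal{C}_m$ the class of graphs with centered chromatic number at most $m$, this reads as the inclusion $\mathcal{L}_k\subseteq \mathcal{C}_{f(k)}$.

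Next, I would apply \Cref{th:bdtdwqo} to the class $\mathcal{C}_{f(k)}$ to conclude that $\mathcal{C}_{f(k)}$ is well-quasi-ordered under the induced subgraph relation. Finally, I would observe that well-quasi-ordering is preserved under passing to a subclass equipped with the restriction of the same quasi-order: any infinite antichain, or any infinite strictly descending chain, in $\mathcal{L}_k$ is, via the inclusion $\mathcal{L}_k\subseteq \mathcal{C}_{f(k)}$, also an infinite antichain (respectively, an infinite strictly descending chain) in $\mathcal{C}_{f(k)}$, which is impossible. Hence $\mathcal{L}_k$ is itself well-quasi-ordered.

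There is no substantial obstacle: the corollary is immediate once the two cited ingredients are in place. The only point worth articulating is the trivial inheritance of the well-quasi-order property by subclasses, which takes one line.
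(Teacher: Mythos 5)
Your proposal is correct and matches the paper's argument exactly: the paper also deduces the corollary by noting that \Cref{th:polybd} bounds $\ccen$ on the class of graphs with $\clin\leq k$, then applies \Cref{th:bdtdwqo} and the fact that a subclass of a well-quasi-ordered class is well-quasi-ordered. Nothing is missing.
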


The obstructions for having linear chromatic number at most $k$ have (by minimality) linear chromatic number $k+1$ and form an antichain, so we also get the following

\begin{corollary}   \label{cor:finite}
For every $k\in \N$ the class of graphs $G$ with $\clin(G)\leq k$ has a finite number of obstructions with respect to the induced subgraph relation.
\end{corollary}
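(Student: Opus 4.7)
The plan is to deduce this corollary directly from \Cref{th:bdcclinwqo} (which has already been established via \Cref{th:bdtdwqo} and \Cref{th:polybd}) in essentially one short step. The text preceding the corollary already spells out the two ingredients we need; the proof should just assemble them cleanly.

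First I would note that the obstruction set is contained in the class $\mathcal{C}_{k+1}=\{G : \clin(G)\leq k+1\}$. Indeed, let $G$ be an obstruction, i.e.\ a subgraph-minimal graph with $\clin(G)>k$. Pick any vertex $v\in V(G)$; by minimality we have $\clin(G-v)\leq k$, and by \Cref{obs:lininduced} this yields $\clin(G)\leq \clin(G-v)+1\leq k+1$. So every obstruction has $\clin$ exactly $k+1$ and therefore lies in $\mathcal{C}_{k+1}$.

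Next I would observe that the set of obstructions is an antichain with respect to the induced subgraph relation: if $G$ and $G'$ are two distinct obstructions and $G'$ were a proper induced subgraph of $G$, then by \Cref{obs:monolin} we would have $\clin(G')\leq k$ (since $G'$ is a proper induced subgraph of a minimal element, contradicting the minimality of $G$—actually contradicting $\clin(G')=k+1$ directly). So the obstruction set is an antichain inside $\mathcal{C}_{k+1}$.

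Finally I would invoke \Cref{th:bdcclinwqo}, which states that $\mathcal{C}_{k+1}$ is well-quasi-ordered by the induced subgraph relation. Since by definition a well-quasi-order contains no infinite antichain, the antichain formed by the obstructions must be finite. This completes the argument. There is no real obstacle here: all the content is packed into \Cref{th:bdcclinwqo}, and the corollary is a two-line consequence once one checks that obstructions sit inside the wqo class and form an antichain.
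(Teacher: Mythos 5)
Your argument is correct and is exactly the paper's: obstructions have linear chromatic number $k+1$ (by minimality together with \Cref{obs:lininduced}), hence lie in the class covered by \Cref{th:bdcclinwqo}, and they form an antichain, so the well-quasi-ordering forces finiteness. The only nitpick is that you write ``subgraph-minimal'' where you mean minimal with respect to the \emph{induced} subgraph relation, but your vertex-deletion step is precisely what that minimality provides, so the proof stands as written.
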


Since a class of graphs closed under subgraphs has a finite obstruction set with respect to the subgraph relation if and only if it has a finite obstruction set with respect to the induced subgraph relation, \Cref{obs:monolin} implies that the analogue of \Cref{cor:finite} also holds for the subgraph relation.

\begin{corollary}   \label{cor:finite-subgraph}
For every $k\in \N$ the class of graphs $G$ with $\clin(G)\leq k$ has a finite number of obstructions with respect to the subgraph relation.
\end{corollary}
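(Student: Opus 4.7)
My plan is to derive \Cref{cor:finite-subgraph} directly from \Cref{cor:finite}, exploiting the fact that the class $\mathcal{C}_k=\{G : \clin(G)\le k\}$ is closed under subgraphs by \Cref{obs:monolin}. For a class closed under subgraphs, any graph that is subgraph-minimal outside the class is also induced-subgraph-minimal outside the class, so the subgraph-obstruction set is a subset of the induced-subgraph-obstruction set. Since the latter is finite by \Cref{cor:finite}, the former must be finite too.

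More concretely, I would introduce $\mathcal{F}_{sub}$ (resp.\ $\mathcal{F}_{ind}$) for the set of graphs that are minimal with respect to the subgraph (resp.\ induced subgraph) relation among those satisfying $\clin>k$, and verify the inclusion $\mathcal{F}_{sub}\subseteq \mathcal{F}_{ind}$. Given $H\in \mathcal{F}_{sub}$, minimality provides $\clin(H')\le k$ for every proper subgraph $H'$ of $H$; applying this to the proper induced subgraphs of $H$ (which are in particular proper subgraphs) shows that $H$ is induced-subgraph-minimal among graphs with $\clin>k$, hence $H\in \mathcal{F}_{ind}$. This step is purely formal and relies only on the definitions together with the containment of proper induced subgraphs in proper subgraphs.

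Combining the inclusion with \Cref{cor:finite} yields $|\mathcal{F}_{sub}|\le |\mathcal{F}_{ind}|<\infty$, which is the desired statement. No real obstacle is anticipated: the entire argument amounts to unpacking the definition of obstruction and applying \Cref{obs:monolin} and \Cref{cor:finite}. The only point requiring care is to state the minimality comparisons in the right direction so that a proper-induced-subgraph hypothesis can be fed into a proper-subgraph conclusion.
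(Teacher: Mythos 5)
Your argument is correct and matches the paper's proof in substance: the paper likewise reduces \Cref{cor:finite-subgraph} to \Cref{cor:finite} via closure under subgraphs (\Cref{obs:monolin}), invoking the equivalence of finiteness for the two obstruction sets, of which your inclusion $\mathcal{F}_{sub}\subseteq\mathcal{F}_{ind}$ is exactly the direction needed. The only remark is that this inclusion is purely formal (proper induced subgraphs are proper subgraphs) and does not itself use subgraph-closure; closure is what makes the subgraph-obstruction set a meaningful characterization of the class, but your finiteness conclusion stands either way.
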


In the next section, we determine the obstructions sets for classes of graphs with $\clin(G)\leq k$ with respect to the subgraph relation, for $k\le 3$.

\subsection{Small values}



 We investigated which graphs have different values for the centered and coloring numbers, for small values.

 \begin{observation}\label{obs:small-values}
 A graph $G$ satisfies $\clin(G) \le 1$ if and only if $G$ is edgeless, and $\clin(G) \le 2$ if and only if $G$ is a star forest.
 In both cases $\clin(G) = \ccen(G)$.
 \end{observation}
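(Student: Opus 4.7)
The plan is to prove the two equivalences separately, and in each case obtain the equality $\clin(G)=\ccen(G)$ by combining Remark~\ref{rem:weaker} with an explicit centered coloring that uses the claimed number of colors.

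For the first equivalence, I would use that every linear coloring is proper: the edge $uv$ is a copy of $P_2$ whose unique center must receive a color distinct from its neighbor's. Hence $\clin(G)\leq 1$ forces $G$ to be edgeless, and conversely the constant coloring of an edgeless graph is trivially centered, so in this case $\clin(G)=\ccen(G)=1$.

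For the second equivalence, the forward direction would be handled by the following explicit centered 2-coloring of a star forest: assign color $1$ to every star center (treating isolated vertices as their own centers) and color $2$ to every leaf. Any connected subgraph is either a single vertex or is contained in a single star, and in either case contains a uniquely colored center, so this is a centered coloring and $\ccen(G)\leq 2$. For the reverse direction, I would observe that $\clin(K_3)=3$ (since linear colorings are proper and $K_3$ has chromatic number $3$) and $\clin(P_4)=3$ by Lemma~\ref{lem:tdpk}, so $\clin(G)\leq 2$ implies that $G$ contains neither $K_3$ nor $P_4$ as a subgraph. Since every cycle of length at least $4$ contains $P_4$ as a subgraph (take three consecutive edges), $G$ is acyclic and hence a forest; the absence of $P_4$ then forces each tree component to have diameter at most $2$ and thus to be a star, so $G$ is a star forest.

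The argument is entirely elementary; the only step requiring a little care is the structural claim that a $\{K_3,P_4\}$-subgraph-free graph is a star forest, which reduces to the two observations that every cycle contains $K_3$ or $P_4$ as a subgraph, and that a tree of diameter at most $2$ is a star. Once the two equivalences are established, combining the bound $\ccen(G)\leq 2$ from the explicit coloring with $\clin(G)\leq \ccen(G)$ from Remark~\ref{rem:weaker} yields $\clin(G)=\ccen(G)$ in both cases.
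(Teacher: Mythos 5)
Your proof is correct and follows essentially the same route as the paper's: the paper likewise deduces from $\clin(G)\le 2$ that $G$ excludes $P_4$ and all cycles as subgraphs, hence is a star forest with $\ccen(G)\le 2$. You simply spell out the details (why $K_3$ and $P_4$ need three colors, why a $\{K_3,P_4\}$-free graph is a star forest, and the explicit centered $2$-coloring) that the paper leaves implicit.
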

 
\begin{proof}
The case $\clin(G) \le 2$ holds due to the fact that if $\clin(G) \le 2$, then $G$ excludes $P_4$ as well as all cycles as subgraphs, hence, $G$ is a star forest, implying $\ccen(G)\le 2$.
\end{proof}

\begin{sloppypar}
In 2012, Dvo\v{r}\'{a}k, Giannopoulou, and Thilikos characterized the family of forbidden subgraphs for treedepth---or, equivalently, the centered chromatic number---at most $3$ (see~\cite{dvorak2012forbidden}).
The list is finite, consisting of $14$ graphs with at most $8$ vertices.
We adapt their approach and obtain a similar result for the linear chromatic number.
The resulting list can be obtained from the list for treedepth (see~\cite[Theorem 4]{dvorak2012forbidden}) by extending two of its members by the addition of one or two pendant edges, respectively.
\end{sloppypar}

\begin{figure}[ht]
    \centering
    \includegraphics[width=\textwidth]{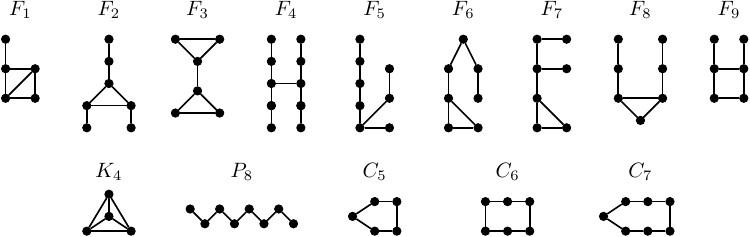}
    \caption{The list $\mathcal{F}$ of subgraph obstructions to $\clin(G)\leq 3$.}
    \label{fig:listObs}
\end{figure}

\begin{theorem}
Let $\mathcal{F}$ be the set of graphs depicted in \Cref{fig:listObs} and let $G$ be a graph.
Then, $\clin(G)\le 3$ if and only if $G$ contains none of the graphs in $\mathcal{F}$ as a subgraph.
\end{theorem}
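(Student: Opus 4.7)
The plan is to follow the blueprint of Dvo\v{r}\'ak, Giannopoulou, and Thilikos (DGT)~\cite{dvorak2012forbidden}, who characterized the subgraph-obstructions to $\ccen(G)\le 3$ by an explicit list $\mathcal{F}_{\mathrm{td}}$ of $14$ graphs. As stated in the paragraph preceding the theorem, $\mathcal{F}$ coincides with $\mathcal{F}_{\mathrm{td}}$ except that two of its members $F_1, F_2$ are replaced by extensions $F_1^+, F_2^+$ obtained by attaching pendant edges. The proof therefore splits into a necessity direction and a sufficiency direction, each of which first reduces as much as possible to the DGT result and then handles the two exceptional members separately.

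For the necessity direction, it suffices by \cref{obs:monolin} to show that $\clin(F)\ge 4$ for every $F\in\mathcal{F}$. For the members already lying in $\mathcal{F}_{\mathrm{td}}$, we rule out every 3-coloring by an exhaustive path-by-path case analysis in the spirit of the proof of \cref{prop:mn}: assume a linear 3-coloring $\varphi$ exists, deduce which pairs of vertices may share a color (namely, those that cannot together lie on a $P_4$ whose endpoints share a color), and exhibit a path with no uniquely colored vertex. For $F_1^+$ and $F_2^+$, we first enumerate the linear 3-colorings of $F_1$ and $F_2$ up to symmetry and color relabeling---a short list since the parents are small---and for each such coloring exhibit a path through the new pendant vertex with no uniquely colored vertex.

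For the sufficiency direction, assume $G$ is connected and $\mathcal{F}$-free (disconnected graphs are handled component-wise). If $G$ is additionally $\mathcal{F}_{\mathrm{td}}$-free, then DGT gives $\ccen(G)\le 3$ and hence $\clin(G)\le 3$ by \cref{rem:weaker}. Otherwise $G$ contains some copy $H$ of $F_1$ or $F_2$ but avoids the corresponding extension, which forces the designated vertices of $H$---those that would receive the attached pendant edges---to have restricted neighborhoods in $V(G)\setminus V(H)$. We use this restriction to build a linear 3-coloring of $G$ by assigning color $3$ to these designated vertices and a star-forest 2-coloring (cf.~\cref{obs:small-values}) to the remaining vertices, checking that the local restriction prevents any $P_4$-like obstruction from arising across $H$. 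The principal obstacle is handling this step when $G$ contains several overlapping copies of $F_1$ or $F_2$, where a coloring chosen with respect to one copy need not work for another; we expect to resolve this by induction on $|V(G)|$, removing a vertex carefully chosen from the local structure around an embedded copy so that $G-v$ remains $\mathcal{F}$-free, applying the induction hypothesis, and using \cref{obs:lininduced} to extend the coloring to $G$.
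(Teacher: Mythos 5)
Your necessity direction matches the paper's (an exhaustive coloring case analysis for each member of $\mathcal{F}$, carried out in the paper's \cref{appendix-small-values}, partly shortcut there via \cref{th:CN-free} for the (claw, net)-free members), and your opening reduction for sufficiency---if $G$ avoids the full Dvo\v{r}\'ak--Giannopoulou--Thilikos list then $\ccen(G)\le 3$ and you are done by \cref{rem:weaker}---is a legitimate shortcut that the paper does not take (the paper instead runs a self-contained minimal-counterexample analysis). The problem is that all of the content of the theorem sits in the case you defer to the end, and your plan for that case does not close. If $G$ contains one of the two un-extended DGT obstructions, then $\ccen(G)\ge 4$, so you must construct a genuine linear $3$-coloring of all of $G$; but a linear coloring is a global object, and knowing that one embedded copy $H$ has restricted attachments says nothing about paths in $G$ far from $H$. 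Worse, your proposed induction is circular in the wrong direction: $G-v$ is automatically $\mathcal{F}$-free (the class is subgraph-closed, by \cref{obs:monolin}), so the induction hypothesis gives only $\clin(G-v)\le 3$, and \cref{obs:lininduced} then yields $\clin(G)\le 4$, which is exactly the bound you are trying to beat. To extend a coloring of $G-v$ to $G$ without spending a fourth color you would need $\clin(G-v)\le 2$, i.e.\ $G-v$ a star forest (\cref{obs:small-values}).

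That is in fact what the paper's argument repeatedly establishes, and it is why the paper cannot avoid a global structural analysis: from $\mathcal{F}$-freeness of a minimal counterexample it derives that $G$ has no cycle of length exceeding $4$, that every $2$-connected subgraph has at most $3$ vertices, that $G$ has at most one triangle, and (in the acyclic case) a bound on the longest path; in each terminal configuration it either exhibits a member of $\mathcal{F}$, exhibits an explicit linear $3$-coloring, or finds a single vertex $v$ with $G-v$ a star forest. Your sketch of ``color $3$ on the designated vertices of $H$ plus a star-forest $2$-coloring elsewhere'' gestures at the right kind of endgame, but without the structural claims there is no reason the rest of $G$ admits a star-forest coloring compatible with $H$, and the overlapping-copies difficulty you flag is not resolved by the induction as stated. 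You would need to replace that step with an analysis of the global shape of $\mathcal{F}$-free graphs comparable to the paper's sequence of claims.
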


\begin{proof}
    It can be verified that all the graphs in $\mathcal{F}$ have linear chromatic number at least $4$ (we provide a proof of this fact in \cref{appendix-small-values}). 
    Hence, by \Cref{obs:monolin}, if $G$ contains one of the graphs in $\mathcal{F}$ as a subgraph, then $\clin(G)\geq  4$. 
    
    Suppose the converse is not true, and let $G$ be a minimal counterexample, i.e., $G$ is a graph such that $G$ contains none of the graphs in $\mathcal{F}$ as a subgraph, $\clin(G)\ge 4$, and every proper subgraph of $G$ has a linear coloring with at most $3$ colors.
    In particular, $G$ is connected.
    Observe that $G$ does not contain a cycle of length greater than 4, otherwise $G$ contains either $C_5$, $C_6$, $C_7$, or $P_8$ (all of which are in $\mathcal{F}$) as a subgraph.

We organize the proof in a sequence of claims establishing a number of structural properties of $G$.

\begin{claim}\label{cl:star-cpt-on-one-side}
The graph $G$ does not admit a $4$-cycle containing two non-adjacent vertices of degree~$2$ in $G$.
\end{claim}

\begin{proof}
Towards a contradiction, denote such a cycle by $C$, and let $v_1,v_2,v_3,v_4$ be a cyclic order of the vertices of $C$ such that $v_2$ and $v_4$ have degree $2$ in $G$. 
We first show that there exists some $i\in \{1,3\}$ such that $v_i$ is not an endpoint of a path of length~$2$ in the graph $G-v_{4-i}$.
For $i\in \{1,3\}$, let $X_i$ be the set of vertices of $G$ adjacent to $v_i$ but not to $v_{4-i}$.
Then, $X_1$ and $X_3$ are disjoint (by definition) and there is no path from $X_1$ to $X_3$ in the graph $G-v_1-v_3$, since otherwise $G$ would contain a cycle of length more than $4$.
Suppose for a contradiction that there is a path of length~$2$, say $v_1x_1y_1$, in the graph $G-v_3$ and a path of length~$2$, say $v_3 x_3 y_3$, in the graph $G-v_1$.
Since vertices $v_2$ and $v_4$ have degree $2$ in $G$, they are both distinct from $x_1$ and $x_3$, as well as from $y_1$ and $y_3$.
Also, $y_1$ and $y_3$ are distinct, since there is no path from $X_1$ to $X_3$ in $G-v_1-v_3$.
Note that no vertex adjacent to both $v_1$ and $v_3$ can be adjacent to a vertex outside of $C$, otherwise we would obtain an $F_9$ subgraph. 
In particular, we have $x_i\in X_i$ for $i\in \{1,3\}$.
Then, the graph $G$ contains three edge-disjoint paths of lengths $1$, $2$, and $4$ starting at $v_1$, namely $v_1v_2$, $v_1x_1y_1$, and $v_1v_4v_3x_3y_3$, respectively. 
These paths form a subgraph of $G$ isomorphic to $F_5 \in\mathcal{F}$, a contradiction.
This shows that there exists some $i\in \{1,3\}$ such that $v_i$ is not an endpoint of a path of length~$2$ in the graph $G-v_{4-i}$.

We may assume without loss of generality that $v_3$ is the center of a star component of the graph $G-v_{1}$.
We claim that the graph $G-v_{1}$ is a star forest.
Suppose for a contradiction that this is not the case.
Suppose first that $G-v_{1}$ contains a cycle, say $C'$.
Then $C'$ does not belong to the component of $G-v_{1}$ containing $v_3$ and we conclude that the graph $G-v_3$ contains a path from $v_1$ to $V(C')$; let $P$ be a shortest such path.
Note that $P$ is of length one, since otherwise the path obtained by the concatenation of the paths $v_4v_3v_2v_1$, $P$, and a path in $C'$ of length~$2$ would form a path of length at least $7$ in $G$, a contradiction with the fact that $G$ does not contain $P_8$ as a subgraph. 
A similar argument shows that $C'$ is the $3$-cycle.
But now, $G$ contains $F_6$ as a subgraph, a contradiction.
This shows that $G-v_1$ is acyclic.
Since $G-v_1$ is not a star forest by assumption, it contains a path of length $3$ as a subgraph; let $Q=w_1w_2w_3w_4$ be such a path.
As above, $Q$ does not belong to the component of $G-v_{1}$ containing $v_3$, hence, the graph $G-v_3$ contains a path from $v_1$ to $V(Q)$; let $R$ be a shortest such path.
Similarly as above, we first observe that $R$ has length one; furthermore, since $G$ does not contain $P_8$ as a subgraph, $R$ must attach to an internal vertex of $Q$, say $w_2$.
But now, $G$ contains three edge-disjoint paths of lengths $1$, $2$, and $4$ starting at $w_2$, namely $w_2w_1$, $w_2w_3w_4$, and $w_2v_1v_2v_3v_4$, respectively, forming a subgraph of $G$ isomorphic to $F_5 \in\mathcal{F}$, a contradiction.
This shows that $G-v_1$ is a star forest.

Since $G-v_1$ is a star forest, we have $\clin(G-v_1)\le 2$ (by \Cref{obs:small-values}) and consequently $\clin(G)\le 3$ (by \Cref{obs:lininduced}), a contradiction. 
\end{proof}

\begin{claim}\label{2-conn-at-most-4}
Every $2$-connected subgraph of $G$ has at most 4 vertices.
\end{claim}

\begin{proof}
Suppose for a contradiction that $G$ contains a $2$-connected subgraph $G'$ on at least $5$ vertices. 
   Then $G'$ has to contain a cycle on $4$ vertices, say $C$, with vertices $v_1,v_2,v_3,v_4$ in cyclic order. 
   Let $v_5$ be a vertex of $V(G')\setminus V(C)$.
 Then, by $2$-connectivity, there are two paths in $G'$ connecting $v_5$ to $C$ that are vertex-disjoint except in $v_5$.
 Furthermore, each of these two paths has length one, since otherwise $G'$ would contain a cycle of length greater than $4$.
 For the same reason, these two paths connect to $C$ at opposite vertices, with loss of generality at $v_1$ and $v_3$.
 Notice that $G'$ does not contain an edge with endpoints in $\{v_2,v_4,v_5\}$, as otherwise $G'$ would contain a $5$-cycle with vertex set $V(C)\cup \{v_5\}$.
 Furthermore, none of the vertices in $\{v_2,v_4,v_5\}$ is adjacent in $G$ to a vertex outside $V(C)$, since otherwise $G$ would contain $F_9$ as a subgraph. 
 This implies that $v_2$ and $v_4$ are two non-adjacent vertices of $C$ that have degree~$2$ in $G$, contradicting \cref{cl:star-cpt-on-one-side}.
 \end{proof}
 
\begin{claim}\label{2-conn-at-most-3}
Every $2$-connected subgraph of $G$ has at most 3 vertices.
\end{claim}

\begin{proof}
Suppose for a contradiction that $G$ contains a $2$-connected subgraph with at least $4$ vertices.
By \Cref{2-conn-at-most-4}, $G$ contains a cycle on $4$ vertices, say $C$, with vertices $v_1,v_2,v_3,v_4$ in cyclic order.  
Since $G$ does not contain $K_4$ as a subgraph, we may assume without loss of generality that $v_2$ is not adjacent to $v_4$.
By \cref{cl:star-cpt-on-one-side}, vertices $v_2$ and $v_4$ cannot both have degree $2$ in $G$.
We may assume without loss of generality that $v_2$ has a neighbor outside of $C$.
If also $v_1$ has a neighbor outside of $C$, then $G$ contains either $C_5$ or $F_9$ as a subgraph.
Therefore, $v_1$ does not have any neighbors outside of $C$, and by symmetry, neither does $v_3$.
\cref{cl:star-cpt-on-one-side} implies that $v_1$ and $v_3$ are adjacent, and, hence, $G$ contains $F_1$ as a subgraph, a contradiction.
\end{proof}

\begin{claim}\label{G-has-a-cycle}
The graph $G$ contains a cycle.
\end{claim}

\begin{proof}
Suppose for a contradiction that $G$ is a forest.
Since $G$ is connected, $G$ is a tree.
The minimality of $G$ also implies that $G$ does not have any pairs of twin leaves:
indeed, if $u$ and $v$ are two leaves of $G$ with the same neighbor, then any linear $3$-coloring of $G-v$ can be extended to a linear $3$-coloring of $G$ by assigning to $v$ the color of $u$, contradicting the fact that $\clin(G)\geq 4$.
Let $P = v_1\dots v_k$ be a longest path in $G$.
Then $k\leq 7$ since $G$ contains no $P_8$ and $k\ge 4$ since otherwise $G$ is a star, implying by \Cref{obs:small-values} that $\clin(G)\le 2$, a contradiction.     
By the maximality of $P$, $d(v_1)= d(v_k) =1$. 
By the absence of twin leaves, $d(v_2) = d(v_{k-1}) = 2$.
We consider three cases depending on the value of $k$.
    \begin{enumerate}
        \item Case $k\in \{4,5\}$.
        In this case, the maximality of $P$ implies that every vertex of $G$ is at distance at most $2$ from $v_3$.
        Therefore, deleting $v_3$ from $G$ results in a star forest, implying by \Cref{obs:small-values,obs:lininduced} that $\clin(G)\le 3$, a contradiction.
        \item Case $k=6$. 
        Only $v_3$ and $v_4$ may have neighbors outside of $P$. 
        However, by the maximality of $P$, neither of $v_3$ and $v_4$ can be an endpoint of a path of length more than $2$ that is edge-disjoint from $P$.
        Moreover, $v_3$ and $v_4$ cannot both be endpoints of paths of length $2$ that are edge-disjoint from $P$ without creating an $F_4$ subgraph.
        We can thus assume without loss of generality that every path with endpoint $v_4$ that is edge-disjoint from $P$ has length at most $1$.  
        Moreover, since $G$ has no twin leaves, there can be only one such leaf $v$. 
        But now, $G$ admits a linear coloring with three colors as follows. 
        Vertex $v_5$ gets color $1$, vertices $v_4$ and $v_6$ get color $2$, vertex $v_3$ gets color $3$, and so does $v$ (if it exists).   
        Then, for the remaining vertices, we color all vertices at distance $i$ from $v_3$ with color $i$ for $i\in \{1,2\}$. 
        It is easy to check that this is indeed a linear $3$-coloring of $G$, a contradiction.
\item Case $k=7$. 
        As $G$ does not have $F_5$ as a subgraph, $d(v_3) = d(v_5) = 2$. 
        So $v_4$ is the only vertex of $P$ that can have degree more than $2$.
        Suppose that $v_4$ has a neighbor $v$ that does not belong to $P$.
        Then, by the maximality of $P$, the vertex $v$ cannot be an endpoint of a path of length more than $2$ in the graph $G-v_4$.
        Furthermore, if $v$ is an endpoint of such a path of length~$2$, then $v$ has degree~$2$ in $G$, since otherwise $G$ would contain $F_5$ as a subgraph.
        Hence, each component of $G-v_4$ is a star, implying by \Cref{obs:small-values,obs:lininduced} that $\clin(G)\le 3$,  a contradiction. 
        \qedhere
    \end{enumerate}
\end{proof}

By \Cref{2-conn-at-most-3} and \Cref{G-has-a-cycle}, every cycle in $G$ has length $3$ and, moreover, $G$ contains a $K_3$, say $C$, with vertices $v_1,v_2,v_3$. 
Observe also that every $K_3$ in $G$ shares a vertex with $C$.
Indeed, if $G$ contains another $K_3$ disjoint from $C$, then it contains either $F_3$ or $F_6$ as a subgraph. 
In particular, by \Cref{2-conn-at-most-3}, there is a vertex in $C$, without loss of generality $v_1$, that intersects all copies of $K_3$ in $G$.

\begin{claim}\label{G-has-no-K3-other-than-C}
The graph $C$ is the only copy of $K_3$ in $G$.
\end{claim}

\begin{proof}
Observe that since $G-v_1$ is not a star forest, $G-v_1$ contains a $P_4$. 
Moreover, $v_1$ is adjacent to a vertex from some $P_4$ of $G-v_1$, as otherwise we can shift a $P_4$ towards $v_1$. We denote such a path by $P$.

Suppose for a contradiction that $G$ contains a second copy $C'$ of $K_3$ (i.e., $C'\neq C$, but they may share vertices or edges). As noted above, $C'$ contains $v_1$. Also, $C$ and $C'$ cannot share other vertices because of \Cref{cl:star-cpt-on-one-side} and the fact that $K_4$ belongs to $\mathcal{F}$. 
Observe that $P$ does not intersect both $C$ and $C'$, as that would result in a longer cycle.
Thus, either $C$ or $C'$ is disjoint from $P$. 
Since $v_1$ is adjacent to a vertex from $P$, this gives rise to either $F_6$ or $F_7$, a contradiction.
\end{proof}

As in the proof of \Cref{G-has-no-K3-other-than-C},
for every vertex $v\in C$, the graph $G-v$ contains a $P_4$ adjacent to $v$.
We say that $v\in V(C)$ is \textit{good} if the unique edge in $C-v$ is the middle edge of all the copies of $P_4$ in $G-v$.

\begin{claim}
There is at most one good vertex of $C$.
\end{claim}

\begin{proof}
Suppose without loss of generality that $v_1$ and $v_2$ are good.
Then, since $v_1$ is good, the connected component of $G-v_1$ containing $v_2,v_3$ is a tree consisting of the edge $v_2v_3$ with some pendant edges.
Similarly, the connected component of $G-v_2$ containing $v_1,v_3$ is a tree consisting of the edge $v_1v_3$ with some pendant edges.
Therefore, $G$ consists of the triangle $C$ with some pendant edges attached to it.
We can thus find a linear $3$-coloring of $G$, by giving the same color to $v_i$ and the leaves adjacent to $v_{i+1}$ for each $i\in \{1,2,3\}$ (indices modulo $3$). 
This contradicts the fact that $\clin(G)= 4$.
\end{proof}

The above claim implies that there is at most one good vertex of $C$.
To conclude the proof it is enough to observe that, in the case of a single good vertex, we obtain an $F_2$, while in the case of no good vertex we obtain an $F_8$, a contradiction.

This completes the proof.
\end{proof}

\section{Algorithmic aspects}
\label{sec:algo}

We now discuss the complexity of \textsc{Linear Chromatic Number}, the decision problem that takes as input a graph $G$ and an integer $k$, and the task is to determine if $\clin(G)\le k$.
It is not clear if the problem is in \textsf{NP}, since, as shown by Kun et al.~\cite{kun2021polynomial}, the problem of determining whether a coloring is linear is \textsf{co-NP}-complete. 
Nevertheless, the close relationship between the centered and linear chromatic numbers, together with known hardness results for the centered chromatic number, lead to the following result.
A graph $G$ is \emph{cobipartite} if its vertex set is a union of two cliques.

\begin{theorem}\label{th:npc}
\textsc{Linear Chromatic Number} is \textsf{NP}-complete for cobipartite graphs.
\end{theorem}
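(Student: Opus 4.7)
The plan is to reduce \textsc{Linear Chromatic Number} on cobipartite graphs to the analogous decision problem for the centered chromatic number, exploiting that the two parameters coincide on this class. Concretely, every cobipartite graph $G$ has independence number at most $2$, since its complement is bipartite and hence triangle-free. In particular, $G$ contains no induced $P_3+P_1$ (whose independence number is $3$), so by \Cref{th:p3p1} every linear coloring of $G$ is centered, and therefore $\clin(G)=\ccen(G)$.

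For membership in $\mathsf{NP}$, I would use a $k$-coloring $\varphi$ of $G$ as a certificate. Although recognizing linear colorings is $\mathsf{coNP}$-complete in general (as noted in~\cite{kun2021polynomial}), the equality above reduces verification on a cobipartite graph to checking that $\varphi$ is a centered coloring, a polynomial task: $\varphi$ is centered if and only if, for every color $i$, each connected component of the subgraph induced by the vertices of color at least $i$ contains at most one vertex of color $i$; this can be tested by computing connected components level by level.

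For $\mathsf{NP}$-hardness, I would reduce, via the identity map, from the problem of deciding whether $\ccen(G)\le k$ when $G$ is cobipartite, which is $\mathsf{NP}$-hard by~\cite{MR1642971}. Because $\clin(G)=\ccen(G)$ on cobipartite graphs, the instance $(G,k)$ has the same answer under both parameters, giving $\mathsf{NP}$-hardness and, combined with the membership argument, $\mathsf{NP}$-completeness.

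The most delicate step is checking that the hardness result from~\cite{MR1642971} indeed produces cobipartite instances for the centered chromatic number (equivalently, the minimum elimination tree height), as this is what makes the identity reduction go through. If that is not directly stated in the form needed, the backup is to adapt the gadgets of~\cite{MR1642971}, or to reduce from pathwidth on cobipartite graphs (also shown $\mathsf{NP}$-hard there) using the tight relationship between pathwidth and treedepth on graphs with small clique cover; but the direct route via the hardness of treedepth on cobipartite graphs is the one I would pursue first.
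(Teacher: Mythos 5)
Your proposal follows the same overall strategy as the paper: show that $\clin=\ccen$ on cobipartite graphs, place the problem in \textsf{NP} via that equality, and import \textsf{NP}-hardness of treedepth on cobipartite graphs. Two remarks. First, you derive the equality from independence number at most $2$ and \Cref{th:p3p1}, whereas the paper derives it from the fact that every connected induced subgraph of a cobipartite graph has a Hamiltonian path (\cref{obs: Hamiltonian}); both are one-line invocations of results already in the paper and equally valid. Second, your \textsf{NP}-membership argument contains an imprecision: the stated ``if and only if'' characterization of centered colorings (each component of the subgraph induced by colors $\ge i$ contains at most one vertex of color $i$) is really the definition of a vertex ranking, and a centered coloring need not satisfy it --- e.g.\ the coloring $1,2,3,2,1$ of $P_5$ is centered but fails the test for $i=1$. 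This does not break membership, since whenever $\ccen(G)\le k$ there exists a ranking with $k$ colors that passes your test and is centered, but the certificate should be described as a ranking (or, as the paper does, a treedepth decomposition of depth at most $k$) rather than an arbitrary centered coloring. Finally, the hardness source you hedge about is indeed available: the paper cites the result of Bodlaender et al.~\cite{MR1612885} that deciding $\ccen(G)\le k$ is \textsf{NP}-complete on cobipartite graphs, so the identity reduction goes through exactly as you intend.
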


\begin{proof}
Every connected cobipartite graph has a Hamiltonian path and every connected induced subgraph of a cobipartite graph is cobipartite.
Thus, by \Cref{obs: Hamiltonian}, any linear coloring of a connected cobipartite graph $G$ is also a centered coloring, hence, $\ccen(G) = \clin(G)$.
This implies that \textsc{Linear Chromatic Number}, when restricted to cobipartite graphs, is in \textsf{NP}: if $\clin(G)\le k$, then this can be certified by giving a treedepth decomposition with depth at most $k$, which is an equivalent definition of centered chromatic number (see \cite{nevsetvril2012sparsity}).
As shown by Bodlaender et al.~\cite{MR1612885}, given a cobipartite graph $G$ and an integer $k$, it is \textsf{NP}-complete to determine if $\ccen(G)\le k$.
Since $\ccen(G)\le k$ if and only if $\clin(G)\le k$, the claimed \textsf{NP}-completeness follows.
\end{proof}

On the positive side, we observe that the problem admits a linear-time fixed-parameter tractable algorithm with respect to its natural parameterization.

\begin{theorem}\label{th:fpt}
For every positive integer $k$, there is an algorithm running in time $\Oh(n)$ that determines if a given $n$-vertex graph $G$ satisfies $\clin(G)\le k$.
\end{theorem}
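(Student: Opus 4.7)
The plan is to reduce the problem to graphs of bounded treewidth and then apply Courcelle's theorem. First I would observe that if $\clin(G)\le k$, then Theorem~\ref{th:polybd} gives $\ccen(G)\le f(k)$ for a computable function $f$, and since $\tw(G)\le \ccen(G)-1$ (a path decomposition of width $\ccen(G)-1$ can be read off from any treedepth decomposition), we also have $\tw(G)\le f(k)-1$. As a preprocessing step I would therefore run Bodlaender's linear-time treewidth algorithm to either produce a tree decomposition of $G$ of width at most $f(k)-1$ in time $\Oh(n)$ (with constants depending on $k$), or certify that $\tw(G) > f(k)-1$, in which case the algorithm immediately outputs ``No''.

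Assuming now that a tree decomposition of $G$ of width bounded by a function of $k$ is available, I would check the property $\clin(G)\le k$ via Courcelle's theorem. The key point is that this property is expressible in monadic second-order logic with edge-set quantification, MSO$_2$. Concretely, the formula $\varphi_k$ existentially quantifies over $k$ vertex sets $V_1,\ldots, V_k$ that partition $V(G)$, and asserts that for every edge set $P$ that forms the edge set of a simple path in $G$, there exist a vertex $v$ of the path and an index $i$ such that $v\in V_i$ and no other endpoint of an edge in $P$ lies in $V_i$. The predicate ``$P$ is the edge set of a simple path'' is itself standard MSO$_2$: the subgraph induced by $P$ must be connected, every incident vertex must have degree $1$ or $2$ in it, and exactly two vertices have degree $1$ (with the single-vertex case handled separately). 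By Courcelle's theorem, $\varphi_k$ can be evaluated on $G$ together with its bounded-width tree decomposition in time $\Oh(h(k)\cdot n)$ for some computable function $h$; for fixed $k$ this is $\Oh(n)$, as desired.

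The only conceptually delicate step is writing $\varphi_k$ correctly, and in particular correctly expressing ``$P$ is the edge set of a simple path'' and ``$v$ has a unique color on this path''; once this is set up, the algorithm is a standard pipeline whose running time is dominated by the linear factors of Bodlaender's algorithm and Courcelle's theorem, with all dependence on $k$ absorbed into the hidden constant.
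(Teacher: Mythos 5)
Your proposal is correct and follows essentially the same route as the paper: bound the treewidth by $\Oh(k^{11})$ via \Cref{th:polybd} and $\tw(G)\le\ccen(G)-1$, run a linear-time treewidth algorithm to either reject or obtain a bounded-width decomposition, and then evaluate an \textsf{MSO}$_2$ formula expressing the existence of a linear $k$-coloring (with the same connectivity-plus-degree encoding of ``edge set forming a path'') via Courcelle's theorem. No gaps.
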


\begin{proof}
Before describing the algorithm, let us mention two observations that are crucial for the algorithm and its correctness.
First, if $G$ is a graph with $\clin(G)\le k$, then $\td(G) = \ccen(G) = \Oh(k^{11})$ (by \Cref{th:polybd} with the improved bound of \cite{bose2022linear}), and consequently $\tw(G)= \Oh(k^{11})$ (since any graph $G$ satisfies \hbox{$\tw(G)\le \td(G)-1$}).
Second, for a fixed $k$, the property that a graph $G=(V,E)$ admits a linear coloring with $k$ colors can be expressed with a fixed \textsf{MSO}$_2$ formula $\psi_k$.
The formula states that there exists a partition of $V$ into pairwise disjoint subsets $X_1,\ldots, X_k$ (the $k$~color classes) such that for each nonempty subset $F\subseteq E$ forming a path there exists a vertex $v$ incident with an edge of $F$ such that $v\in X_i$ for some $i\in \{1,\ldots, k\}$ and no other vertex incident with an edge of $F$ belongs to $X_i$.
Expressing that $F$ forms a path can be done by requiring two conditions on the subgraph formed by $F$: (i) that it is connected (that is, that for each partition of the set of vertices incident with an edge in $F$ into two nonempty parts there exists an edge in $F$ connecting the two parts), and (ii) that all its vertices have degree~$2$, except two of them, which have degree one. 

Let $G$ be a graph and let $n$ be the order of $G$.
The algorithm to determine if $\clin(G)\le k$ is now easy to obtain.
First, we apply any of the algorithms from~\cite{MR3479705} or \cite{Korhonen2023} to determine in time $2^{\Oh(k^{11})}n$ if $\tw(G)= \Oh(k^{11})$.
If this is not the case, we conclude that $\clin(G)> k$.
If $\tw(G)= \Oh(k^{11})$, then we also obtain a tree decomposition of $G$ with width $\Oh(k^{11})$.
Next, given this tree decomposition, we apply Courcelle's theorem~\cite{MR1042649} to determine in time $\Oh(n)$ (where the hidden constant depends on $k$) if $G$ models the formula $\psi_k$.
Since this is the case if and only if $G$ admits a linear coloring with $k$ colors, the result follows.
\end{proof}

\section{Conclusion}
\label{sec:concl}

In this paper we explored various aspects of the linear chromatic number of graphs. 
In particular, we gave improved bounds for several graph classes. 
We recall that most of our results in this direction are listed in \Cref{table:sum}. 
We investigated the subgraph obstructions to bounded linear chromatic number and provided two algorithmic results: an \textsf{NP}-completeness proof and an FPT algorithm.
So far, the conjecture that motivated this research is still open. Let us restate it below.

\conj*

As noted in the introduction, the ratio 2 above cannot be improved in general. This suggests the following open problem.

\begin{question}
    What is the supremum $c$ of $\ccen(T)/\clin(T)$ when $T$ is a tree?
\end{question}
By \Cref{th:log3}, $c\geq \log 3$.

\paragraph{Acknowledgements.}
The authors thank Jana Masa\v{r}\'ikov\'a and Wojciech Nadara for allowing them to include \Cref{prop:mn} and its proof.
This work was supported in part by the Slovenian Research and Innovation Agency (I0-0035, research program P1-0285 and research projects J1-3003, J1-4008, J1-4084, J1-60012, J1-70035, J1-70046, J5-4596, and N1-0370), and by the research program CogniCom (0013103) at the University of Primorska.
J.-F.~Raymond was partially supported by the ANR project GRALMECO (ANR-21-CE48-0004).

\bibliographystyle{alpha}
\bibliography{clin}

\appendix
\section{Pseudogrids}\label{sec:pseudogrids}

Recall that pseudogrids are the subgraph-minimal graphs that contain $\grid_k$ for some integer $k$. The following lemma corresponds to the result given in \cite{bose2022linear} on pseudogrids. However, their paper focuses on the lower bound only. We give here a proof for the upper bound. 

\begin{lemma}\label{lem:grids-and-pseudogrids}
There are constants $c,c'>0$ such that for every $k>1$, \[ck \leq \clin(\grid_k) \leq \ccen(\grid_k) \leq c' k.\]
Furthermore, there is a constant $c''>0$ such that for every $k>1$ and every subgraph-minimal graph $G$ containing $\grid_k$ as a minor (i.e., $k\times k$ pseudogrid) we have $\ccen(G)/\clin(G)\leq c''$.
\end{lemma}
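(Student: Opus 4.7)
The leftmost inequality $ck \le \clin(\grid_k)$ follows from \Cref{th:lineargrid} applied to $\grid_k$ itself, and $\clin(\grid_k) \le \ccen(\grid_k)$ is \Cref{rem:weaker}. For the upper bound $\ccen(\grid_k) \le c'k$, I would use a recursive ``cross separator'' argument. Let $S$ be the union of the middle row and middle column of $\grid_k$; then $|S|\le 2k$ and removing $S$ splits $\grid_k$ into at most four subgrids of side length $\le \lceil (k-1)/2\rceil$. Arranging the $S$-vertices linearly at the top of the elimination forest---so that all pairs of them lie in ancestor-descendant relation, which trivially respects all intra-$S$ edges of $\grid_k$---and attaching each recursively processed quadrant below as independent subtrees yields the recurrence $f(k)\le 2k+f(\lceil (k-1)/2\rceil)$, unfolding to $f(k)\le 4k$.

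For the pseudogrid statement, let $G$ be a $k\times k$ pseudogrid with branch sets $\{B_v : v\in V(\grid_k)\}$. Subgraph-minimality forces each $B_v$ to be a tree, exactly one edge between $B_u$ and $B_v$ for each $uv\in E(\grid_k)$ and no other inter-branch edges, and each $B_v$ to be the minimal subtree spanning its at most $\deg_{\grid_k}(v)\le 4$ anchor vertices---hence a subdivision of a ``topological tree'' $T_v$ with at most four leaves and $O(1)$ vertices. Let $H'$ be the graph obtained from $\grid_k$ by replacing each vertex $v$ with $T_v$ and each edge $uv\in E(\grid_k)$ with a single edge between the appropriate anchors of $T_u$ and $T_v$. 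Then $G$ is a subdivision of $H'$ (with subdivisions occurring only inside the $T_v$'s), $|V(H')| = O(k^2)$, and $H'$ is planar; the same cross argument as above applied to $H'$ (whose ``middle row'' and ``middle column'' have $O(k)$ vertices inherited from $\grid_k$ via the $T_v$'s) gives $\ccen(H') = O(k)$.

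To lift this to $G$, I would take a centered $O(k)$-coloring of $H'$ and color the subdivision vertices of each edge $e$ of $H'$ independently with $\lceil\log(\ell_e+1)\rceil$ fresh colors via the standard centered coloring of $P_{\ell_e}$, using a palette disjoint from $H'$'s and reused across edges. This is centered on $G$ because any connected subgraph $S$ of $G$ falls into one of three cases: (i) $|S\cap V(H')|=0$, so $S$ lies in a single subdivision and is centered by the path coloring; (ii) $|S\cap V(H')|=1$, so the unique $H'$-vertex of $S$ has a unique color in $S$ (as its palette is disjoint from the subdivision palette); or (iii) $|S\cap V(H')|\ge 2$, in which case the image of $S$ in $H'$ is connected---any $G$-path between two distinct $H'$-vertices must traverse each visited subdivision fully, being unable to make a U-turn---so the centered $H'$-coloring supplies a uniquely colored $H'$-vertex in $S$. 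Setting $\ell_{\max}=\max_e \ell_e$, we obtain $\ccen(G) \le \ccen(H') + \lceil\log(\ell_{\max}+1)\rceil = O(k + \log \ell_{\max})$. Combined with $\clin(G)\ge ck$ (by \Cref{th:lineargrid}) and $\clin(G) = \Omega(\log \ell_{\max})$ (by \Cref{obs:monolin} and \Cref{lem:tdpk}, since $G$ contains $P_{\ell_{\max}+2}$ as a subgraph), this yields $\ccen(G)/\clin(G)=O(1)$, giving the desired constant $c''$.

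The main obstacle in this plan is verifying the structural description of pseudogrids rigorously---that subgraph-minimality really forces each $B_v$ to be a minimal Steiner tree on at most four anchors with unique and unsubdivided inter-branch edges. This is a routine but case-based analysis of the definition of ``subgraph-minimally contains $\grid_k$ as a minor;'' once it is in place, both the cross-separator bound on $\ccen(H')$ and the coloring-lift to $G$ are straightforward.
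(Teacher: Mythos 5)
Your proposal is correct and follows essentially the same route as the paper: the identical cross-separator recursion for $\ccen(\grid_k)\le 4k$, and for pseudogrids the same strategy of suppressing the long degree-2 subdivision paths to obtain a bounded-blow-up core of treedepth $O(k)$, then spending an extra $\lceil\log(\ell_{\max}+1)\rceil$ fresh colors on the paths and charging both terms to the lower bounds $\clin(G)\ge ck$ and $\clin(G)=\Omega(\log \ell_{\max})$. The only (cosmetic) difference is in how the core is colored: you re-run the separator recursion on the blown-up grid $H'$, whereas the paper first reduces to branch sets of size at most two and duplicates the colors of the standard $\grid_k$ coloring.
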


\begin{proof}
    The lower bound is \Cref{th:lineargrid}.
    The upper bound can be obtained by induction on $k$ as follows, with $c' = 4$. 
    The base case $k=2$ is trivial since the two coloring numbers are equal. 
    When $k>2$, we pick a set of vertices $X$ consisting of the middle row and the middle column of $\grid_k$ (if $k$ is even, we choose one of the two rows and columns  arbitrarily). This set contains $2k-1$ vertices, to which we assign a unique color each.
    Observe that each connected component of $\grid_k-X$ is a copy of $\grid_{k'}$ for some $k' \leq (k-1)/2$. By induction, there is a centered coloring of each such component with at most $c'k'$ colors. Fix such a coloring, with the same set of colors of each of the 4 components. We claim that this, together with the aforementioned coloring of $X$, is a centered coloring of $\grid_k$ with at most $c'k$ colors. The colors bound holds since we use at most $2k$ colors for $X$ and at most $c'k/2$ for $\grid_k-X$, so at most $c'k$ in total as $c'=4$. It is a centered coloring because any subgraph of $\grid_k$ either is included in a single component of $G-X$, in which case it has a uniquely colored vertex, by induction, or the subgraph contains a vertex of $X$, which has a unique color by definition.
    
    We now prove the second part of the statement. 
    We may assume that $k\ge 3$.
    By \Cref{th:lineargrid} we have $\clin(G)\geq ck$.  
    Here we will use the following equivalent (see, e.g., \cite{MR4874150}) definition of a minor: $H'$ is a minor of $H$ if there is a function mapping vertices of $H'$ to disjoint subsets of $H$, each inducing a connected subgraph, and such that adjacent vertices of $H'$ are mapped to subsets that are connected by an edge in $G$. So in the case we study, let $f$ be such a mapping from $V(\grid_k)$ to vertex subsets of $G$.

    Note that because $G$ is subgraph-minimal and $\grid_k$ has maximum degree 4, every vertex has degree 4, 3, or 2 in $G$. For the same reason $\{f(v) \mid v\in V(\grid_k)\}$ is a partition of $V(G)$ and each such set $f(v)$ induces a tree and has at most two vertices that have degree more than $2$ in $G$.
    
    Suppose first that for every $v\in V(\grid_k)$, the set $f(v)$ contains at most two vertices.
    Then we can use a coloring of $\grid_k$ with $c'k$ colors as described above and duplicate each color in order to color the vertices in $G$. 
    For instance, if $v\in V(\grid_k)$ receives some color $x$, we assign colors $(0,x)$ and $(1,x)$ to the two vertices of $f(v)$. 
    As noted above, the image of $f$ defines a partition of $V(G)$. 
    This shows that in this case $\ccen(G) \leq 2c'k$ and we are done.

    Let us now consider the case where the sizes of the images of $f$ are not restricted.
    Let $W$ be the graph obtained from $G$, iteratively, as follows: as long as $|f(v)| > 2$ for some $v\in V(\grid_k)$, we arbitrarily pick a vertex of $f(v)$ that has degree~$2$ in $G$, delete it, and join its neighbors by an edge (if they were not already adjacent). 
    Such a vertex exists in this case by the above remark about subgraph-minimality.
    When this process ends, the resulting graph $W$ satisfies the conditions of the first case so we can color it with $2c'k$ colors.

    Notice that the vertices we deleted in the above process induce a collection of paths in $G$, the vertices of which have degree 2 in $G$.
    Let $s$ be the maximum order of a path of $G$ whose vertices have degree $2$ in $G$. 
    As $\clin(P_s)\geq \ceil{\log s}$ by \cref{lem:tdpk}, we have $\clin(G) \geq \ceil{\log s}$.

    Then we can get a centered coloring of $G$ as follows: the vertices that also exist in $W$ receive the same color as in this graph and each maximal path of $G-V(W)$ is colored with the same set of at most $\ceil{\log s}$ colors as in \cref{lem:tdpk}. 
    This is indeed a centered coloring, as every subgraph of $G$ either is a subgraph of one of the aforementioned paths of degree 2 vertices, in which case it has a center by the choice of the colors in this path, or it contains a vertex of $W$, and the center exists by the choice of the coloring of $W$.
    We get $\ccen(G) \leq 2c'k + \ceil{\log s}$.
    As noted above $\clin(G) \geq ck$ and $\clin(G) \geq \ceil{\log s}$, so the ratio $\ccen(G)/\clin(G)$ is bounded from above by a constant.
\end{proof}

\section{Obstructions for linear chromatic number 3}\label{appendix-small-values}
\setlength{\extrarowheight}{5pt}

\begin{lemma}\label{lem:obst}
Let $F\in \mathcal{F}$ be one of the graphs depicted in \Cref{fig:listObs}. 
Then $\clin(F) \ge 4$.
\end{lemma}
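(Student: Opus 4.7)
The plan is to verify, for each $F \in \mathcal{F}$, that no three-coloring of $V(F)$ is linear, grouping the members of $\mathcal{F}$ into three natural families and handling each in turn.

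The path $P_8$ and the clique $K_4$ are handled directly: Lemma~\ref{lem:tdpk} yields $\clin(P_8) = \lceil \log 9 \rceil = 4$, and $\clin(K_4) \geq \omega(K_4) = 4$ since any linear coloring is proper. For the cycles $C_n \in \mathcal{F}$ with $n \in \{5,6,7\}$, I would use the observation (already employed in the proof of Proposition~\ref{prop:mn}) that every linear coloring of a cycle $C$ has at least two vertices carrying a color unique in $C$: otherwise, removing the one unique vertex (if any) would leave a path in which every color appears at least twice, violating linearity. Deleting the two unique-colored vertices from $C_n$ yields one or two subpaths of total order $n-2$, which must be linearly colored using only colors disjoint from the two unique ones; applying Lemma~\ref{lem:tdpk} to the longer of these subpaths gives $\clin(C_n) \geq 4$ in each of the three cases.

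For the remaining graphs $F_i$, I would proceed by contradiction following the template used in the proof of Proposition~\ref{prop:mn}. Assuming a linear coloring $\varphi$ with three colors, I would locate a cycle or long path inside $F_i$ whose coloring is tightly constrained---typically forcing two of the three colors to occur as unique-in-the-cycle centers and fixing the rest up to swapping the two repeated colors---and then exhibit a specific path in $F_i$ without a center to derive a contradiction. Each $F_i$ has at most eight vertices and is structurally simple (essentially one of the treedepth-$3$ obstructions of \cite{dvorak2012forbidden} or one of the two augmentations of such obstructions by pendant edges), so each verification is a short case analysis.

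The main obstacle is the absence of a single unifying argument: each $F_i$ requires its own ad hoc inspection. The two augmented members of $\mathcal{F}$ in particular cannot be dispatched by merely invoking the centered chromatic number bound of \cite{dvorak2012forbidden}, since the corresponding un-augmented treedepth-$3$ obstructions have $\clin \leq 3$, which is precisely why they had to be augmented by pendant edges. The small orders involved (at most $3^8$ candidate three-colorings per graph) nevertheless keep the case analysis tractable, and it can be cross-checked by exhaustive enumeration.
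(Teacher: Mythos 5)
Your explicit arguments are correct and in places take a genuinely more elementary route than the paper. For $K_4$ you use $\clin(K_4)\ge\omega(K_4)=4$; for $C_5,C_6,C_7$ you combine the two-unique-colors-per-cycle observation with \cref{lem:tdpk} applied to the subpaths left after deleting the two uniquely colored vertices (for $n\in\{5,6,7\}$ the longer subpath always has at least two vertices, hence needs at least two further colors, giving $\clin(C_n)\ge 4$). Both arguments check out and are self-contained. The paper instead routes $K_4$, the cycles, and also $F_1,F_3,F_6,F_8$ through a single shortcut: these eight graphs are (claw, net)-free, so by \cref{th:CN-free} every linear coloring is centered and $\clin=\ccen\ge 4$ follows from the treedepth obstruction list of \cite{dvorak2012forbidden}. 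You miss this shortcut, which costs you four additional graphs ($F_1,F_3,F_6,F_8$) that would otherwise be dispatched without any case analysis; what each approach buys is clear: yours avoids invoking \cite{dvorak2012forbidden} and the Hamiltonian-path machinery for the cycles and $K_4$, the paper's collapses more cases into one lemma.

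The genuine gap is that for the graphs $F_1,\dots,F_9$ you only describe a template (``locate a cycle or long path whose coloring is tightly constrained, then exhibit a path without a center'') and assert that each verification is a short case analysis, cross-checkable by enumeration. This is where essentially all of the content of the lemma lives: the paper spends its entire appendix carrying out exactly these deductions for $F_2,F_4,F_5,F_7,F_9$, each requiring a specific chain of forced colors and a specific centerless path (e.g.\ for $F_5$ one must first argue that the middle vertex of the $P_7$ is the only possible center of that path, which is not an instance of the cycle template at all, since $F_5$ is a tree). A promise that a finite check succeeds is not a proof that it does, and your template as stated does not even apply uniformly (several of the $F_i$ are acyclic, so ``two unique colors on a cycle'' is unavailable). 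Your observation that the two pendant-augmented members cannot be handled by the treedepth bound alone is correct and shows you understand why the case analysis is unavoidable for them; it needs to actually be performed, for those two and for the remaining $F_i$ you have not reduced to a known result.
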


\begin{proof}
If $F$ is in $\{F_1 , F_3 , F_6 , F_8 , K_4 , C_5 , C_6 , C_7\}$, then $F$ is (claw, net)-free, hence, by \Cref{th:CN-free}, $\clin(F) = \ccen(F) \ge 4$, where the inequality follows from \cite[Theorem~4]{dvorak2012forbidden}).
For $P_8$ this follows from \Cref{lem:tdpk}.

In the following we handle the remaining graphs of \Cref{fig:listObs}, namely $F_2$, $F_4$, $F_5$, $F_7$, and $F_9$ in order, by assuming there exists a linear coloring $\varphi$ with colors $\{0,1,2\}$ and reaching a contradiction. 

\bigskip

\noindent
{\small
\begin{tabular}{ | c  m{12cm}  | }
	 \hline
	\begin{minipage}{.2\textwidth}
		\includegraphics[scale=1.4]{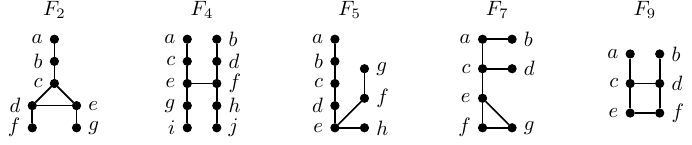}
	\end{minipage}
	&
	For the graph $F_2$, labeled as in the figure on the left, we can make the following successive assumptions and implications.
	\begin{enumerate}
		\setlength{\itemsep}{2pt}
		 \setlength{\parskip}{0pt}
		\item $\varphi(c) = 0$, $\varphi(d) =1$, and $\varphi(e) =2$ because the vertices form a triangle and we can choose the values by symmetry;
		\item $\varphi(f) =0$ or $\varphi(g)= 0$ because of the path  $\mathit{fdeg}$; suppose without loss of generality that $\varphi(f) =0$;
		\item $\varphi(b) = 2$ because of the path $\mathit{fdcb}$;
		\item $\varphi(a) =1$ because of the path $\mathit{abce}$.
	\end{enumerate}
	Then, the path $\mathit{fdecba}$ has no center, a contradiction.
	 \vspace{5pt}   \\ 
	 \hline
	
	
	\begin{minipage}{.2\textwidth}
		\includegraphics[scale=1.4]{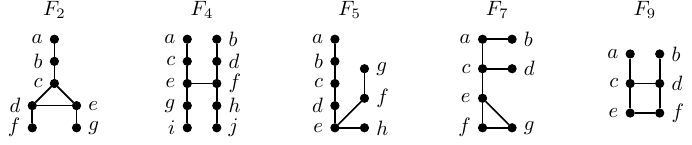}
	\end{minipage}
	&
	Consider now the graph $F_4$, labeled as in the figure on the left. 
    We refer to the components of $F_4-\{e,f\}$ as the top/bottom left/right component, according to their position on the picture. We can make the following successive assumptions and implications.
	\begin{enumerate}
			\setlength{\itemsep}{2pt}
		\setlength{\parskip}{0pt}
		\item $\varphi(e) =0$ and $\varphi(f)=1$, by symmetry;
		\item Color 0 appears in one of the right components of $F_4-\{e,f\}$, otherwise $F_4-\{e\}$ would contain a $P_5$ colored with two colors (and, hence, without a center). 
		We can assume without of loss of generality that 0 appears in the top right component, and symmetrically that 1 appears in the top left component.
		\item Any component of $F_4-\{e,f\}$ has a vertex colored 2, otherwise it would form, together with $e$ and $f$, a path with no center.        
	\end{enumerate}
	Then the path formed by the two top components and vertices $u$ and $v$ contains twice each color, hence, this path has no center, a contradiction.
	\vspace{5pt}   \\  \hline
	\begin{minipage}{.2\textwidth}
		\includegraphics[scale=1.5]{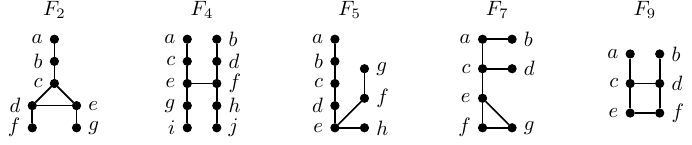}
	\end{minipage}
	&
	For the graph $F_5$, labeled as in the figure on the left, note that $\textit{abcdefg}$ is a $P_7$, so it has to have a unique color on $d$, as any other choice of a center would leave a path on at least 4 vertices to be colored with 2 colors.
	We can make the following successive assumptions and implications.
	\begin{enumerate}
			\setlength{\itemsep}{2pt}
		\setlength{\parskip}{0pt}
		\item $\varphi(d)=0$ (by symmetry of the colors) and color 0 does not appear on the paths $\mathit{abc}$ and $\mathit{efg}$;
		\item $\varphi(h) =0$ because of the path  $\mathit{hefg}$ and since color 0 does not appear on $\mathit{efg}$;
		\item $\varphi(e) = 1$ by symmetry of the colors 1 and 2;
		\item $\varphi(c) = 2$ because of the path $\mathit{hedc}$;
		\item $\varphi(b)=1$ and $\varphi(a)=2$ since color 0 does not appear on the path $\mathit{abc}$.
	\end{enumerate}
	Then, the path $\mathit{abcdeh}$ has no center, a contradiction.\vspace{5pt}   \\   \hline
    \end{tabular}

        \newpage
\begin{tabular}{ | c  m{12cm}  | }
	 \hline
		\begin{minipage}{.2\textwidth}
			\includegraphics[scale=1.4]{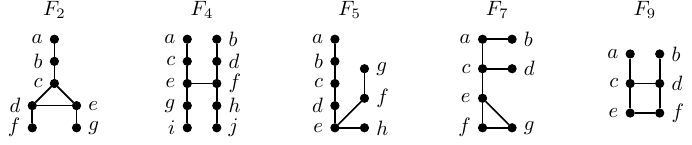}
		\end{minipage}
		&
		For the graph $F_7$, labeled as in the figure on the left, we can make the following successive assumptions and implications.
		\begin{enumerate}
				\setlength{\itemsep}{2pt}
			\setlength{\parskip}{0pt}
			\item $\varphi(e) = 0$, $\varphi(f) = 1$, $\varphi(g) = 2$ because the vertices form a triangle and we can choose the values by symmetry;
			\item $\varphi(c) = 1$ by symmetry of the colors 1 and 2; 
			\item $\varphi(a) = \varphi(d) = 2$ because of the paths $\mathit{feca}$ and $\mathit{fecd}$, respectively;
			\item $\varphi(b) = 0$ because of the path $\mathit{bacd}$.
		\end{enumerate}
		Then, the path $\mathit{bacefg}$ has no center, a contradiction.
		\vspace{5pt}   \\  \hline	

    \begin{minipage}{.2\textwidth}
			\vspace{10pt}
			\includegraphics[scale=1.4]{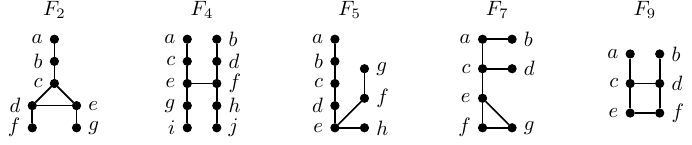}
	\end{minipage}
            &
		Finally, consider the graph $F_9$, labeled as in the figure on the left. 
        The three colors appear on the cycle $\mathit{cdfe}$, one of them appears twice on non-adjacent vertices of that $C_4$. By symmetry of the graph and the colors, we can assume $\varphi(c)=\varphi(f) = 0$, $\varphi(d) = 1$, and $\varphi(e) = 2$.
		Therefore $\varphi(a)= 2$ because of the path $\mathit{acdf}$.
		Then, the path $\mathit{acef}$has no center, a contradiction. \\ \hline
	\end{tabular}
}\\[15pt]
This concludes the proof of \cref{lem:obst}.
\end{proof}

\end{document}